\newtheorem{thm}{Theorem}[section]
\newtheorem{lem}[thm]{Lemma}
\newtheorem{prop}[thm]{Proposition}
\newtheorem{cor}[thm]{Corollary}
\theoremstyle{definition}
\newtheorem{defin}[thm]{Definition}
\newtheorem{que}{Question}
\theoremstyle{remark}
\newtheorem{rmk}[thm]{Remark}
\numberwithin{equation}{section}
\newcommand{\bigslant}[2]{{\raisebox{.2em}{$#1$}\left/\raisebox{-.2em}{$#2$}\right.}}
\newcommand{\p}{\mathfrak{p}}
\newcommand{\OO}{\mathcal{O}}
\newcommand{\A}{\mathcal{A}}
\newcommand{\J}{\mathcal{J}}
\newcommand{\LL}{\mathcal{L}}
\newcommand{\HH}{\mathcal{H}}
\newcommand{\Mod}[1]{\ (\mathrm{mod}\ #1)}
\DeclareMathOperator{\Spec}{Spec}
\DeclareMathOperator{\Sp}{Sp}
\DeclareMathOperator{\Proj}{Proj}
\DeclareMathOperator{\Hom}{Hom}
\DeclareMathOperator{\EExt}{\mathscr{E}\textit{\kern -1pt {xt}}\,}
\DeclareMathOperator{\id}{id}
\DeclareMathOperator{\ord}{ord}
\DeclareMathOperator{\FJ}{FJ}
\DeclareMathOperator{\GL}{GL}
\DeclareMathOperator{\CH}{CH}
\DeclareMathOperator{\Sym}{Sym}
\DeclareMathOperator{\Ima}{Im}
\DeclareMathOperator{\tr}{tr}
\DeclareMathOperator{\rk}{rk}
\DeclareMathOperator{\FM}{FM}
\begin{document}

\title[Formal Fourier--Jacobi series from a cohomological point of view]{Modularity of formal Fourier--Jacobi series from a cohomological point of view}

\author{Marco Flores}
\email{}
\thanks{The author acknowledges support from the Deutsche Forschungsgemeinschaft (DFG, German Research Foundation) under Germany's Excellence Strategy – The Berlin Mathematics Research Center MATH+ (EXC-2046/1, project ID: 390685689).}

\begin{abstract}
We investigate the modularity of formal Fourier--Jacobi series by establishing cohomological vanishing results for line bundles defined on compactifications of $\mathcal{A}_g$. Working over $\mathbb{C}$, we show that the minimal compactification of $\mathcal{A}_2$ has only rational singularities, which allows us to characterize, for sufficiently large weights, the modularity of formal Fourier--Jacobi series of genus~$2$ via cohomological vanishing. Working over $\mathbb{Z}$, we introduce a level $n\geq 3$ structure and, via the the resolution morphism from a toroidal compactification of $\mathcal{A}_{g,n}$ to its minimal compactification, we characterize the modularity of arithmetic formal Fourier--Jacobi series with a level $n\geq 3$ structure and sufficiently large weight via cohomological vanishing.
\end{abstract}

\maketitle
\tableofcontents

\section{Introduction}
\subsection{Background}

In recent decades, the theory of modular forms has occupied a prominent position in mathematics, as it has proven to be intimately related to other, seemingly separate, mathematical fields. For instance, modular forms have been a key tool in establishing notorious number-theoretic results, such as Fermat's Last Theorem \cite{Wil95}, or the $290$-Theorem on universal quadratic forms \cite{BH05}. On the other hand, they have also been utilized to prove major geometric results, such as the optimality of the $E_8$-lattice sphere packing in dimension~$8$ \cite{Via17}, and the optimality of the Leech lattice sphere packing in dimension~$24$ \cite{CKMRV17}. 

Much of the importance of modular forms comes in fact from their Fourier expansions, or, in the higher dimensional case, their Fourier--Jacobi expansions. As it turns out, there are many examples of modular forms, whose Fourier and Fourier--Jacobi coefficients contain information which is relevant to a given arithmetic or geometric problem. It then becomes important to determine when a given class of mathematical objects has a relationship of this kind with modular forms, and thus speak of \textit{modularity conjectures} or \textit{modularity results}, the most celebrated of which is perhaps the modularity theorem for semistable elliptic curves over $\mathbb{Q}$, proved by Wiles in \cite{Wil95}. 

A further example of a modularity result is the one investigated by Hirzebruch and Zagier in \cite{HZ76}, where they showed that the intersection numbers of certain cycles on Hilbert modular surfaces are in fact equal to the Fourier coefficients of a suitable modular form. This modularity result was generalized by Kudla and Millson in the work \cite{KM90}, to the case of intersection numbers of cycles on locally symmetric spaces $X$ associated to orthogonal and unitary groups, by constructing liftings from the cohomology with compact supports of $X$ to a space of modular forms. Furthermore, the case where $X$ is associated to the orthogonal group $O(n,2)$ was studied in detail in \cite{Kud97}. There, Kudla associated to each positive semi-definite symmetric matrix $N\in\Sym_g(\mathbb{Q})$, a rational equivalence class $Z(N)\in\CH^g(X)$ of special cycles of codimension $g$, and considered their generating series \begin{equation}\label{kudlagenseries}
    A_g(\tau)=\sum_{\substack{0\leq N\in\Sym_g(\mathbb{Q})}}Z(N)e^{2\pi i\tr(N\tau)},
\end{equation} where the variable $\tau$ belongs to the Siegel upper half-space $\HH_g$. The results in \cite{KM90} then imply that the image in cohomology of this generating series is in fact a modular form, which led Kudla to ask in \cite{Kud97} whether already the series \eqref{kudlagenseries} at the level of Chow groups is a modular form.

This modularity conjecture of Kudla was positively answered by Bruinier and Raum in \cite{BR15}. A major, previously established, reduction step in the proof of Kudla's conjecture, due to Zhang \cite{Zha09}, consists in showing that the coefficients of the generating series \eqref{kudlagenseries} are in fact Jacobi forms. Bruinier and Raum then proved that any symmetric formal Fourier--Jacobi series, that is, a formal series whose coefficients are Jacobi forms and are invariant with respect to a natural $\GL_g(\mathbb{Z})$-action, converges and thus equals the Fourier--Jacobi expansion of a modular form. This modularity result for formal Fourier--Jacobi series was extended to the arithmetic setting over $\mathbb{Z}$ by Kramer \cite{Kra22}, with the use of the theory of arithmetic compactifications of the moduli stack $\mathcal{A}_g$ of principally polarized abelian schemes $A\to S$ of relative dimension~$g$, where $S$ is some base scheme.

The modularity result of Bruinier--Raum can be stated as follows. Let $M_k(\Gamma_g)$ denote the $\mathbb{C}$-vector space of Siegel modular forms of weight $k$ and genus $g$ with respect to the Siegel modular group $\Gamma_g$, and let $\FM_k^{(g,1)}$ denote the vector space of symmetric formal Fourier--Jacobi series of weight $k$, genus $g$ and cogenus $1$ (see Definition \ref{formalsymdef}). Then, the main result in \cite{BR15} states that the Fourier--Jacobi homomorphism \[\FJ\colon M_k(\Gamma_g)\to\FM_k^{(g,1)},\] taking a Siegel modular form to its Fourier--Jacobi expansion of cogenus $1$, is surjective.

A key idea, suggested in \cite{Kra22}, is that the Fourier--Jacobi homomorphism can be given a cohomological interpretation, while furthermore working over $\mathbb{Z}$, as follows. Let $\overline{\mathcal{A}}_g$ be a toroidal compactification of the moduli stack $\mathcal{A}_g$, let $\omega$ denote the Hodge line bundle on $\overline{\mathcal{A}}_g$, and let $\J$ be a sheaf of ideals supported on the boundary $\overline{\mathcal{A}}_g\setminus\mathcal{A}_g$. Then, the space of Siegel modular forms of weight~$k$ is given by the space of global sections $H^0(\overline{\mathcal{A}}_g, \omega^k)$, and the Fourier--Jacobi homomorphism is recovered as the map 
 \[\FJ\colon H^0(\overline{\mathcal{A}}_g,\omega^k)\to \varprojlim_m H^0(\overline{\mathcal{A}}_g, \omega^k\otimes\bigslant{\OO_{\overline{\mathcal{A}}_g}}{\mathcal{J}^m}),\]
which is found in the long exact sequence 

\begin{multline}\label{lesintr}0\to \varprojlim_m H^0(\overline{\mathcal{A}}_g,\omega^k\otimes\mathcal{J}^m) \to H^0(\overline{\mathcal{A}}_g,\omega^k)\xrightarrow{\FJ} \varprojlim_m H^0(\overline{\mathcal{A}}_g, \omega^k\otimes\bigslant{\OO_{\overline{\mathcal{A}}_g}}{\mathcal{J}^m})\\
\to  \varprojlim_m H^1(\overline{\mathcal{A}}_g,\omega^k\otimes\mathcal{J}^m)\to H^1(\overline{\mathcal{A}}_g,\omega^k)\to\cdots\end{multline} of abelian groups. The surjectivity of $\FJ$, together with the exactness of the sequence~\eqref{lesintr}, implies that the homomorphism \[\varprojlim_m H^0(\overline{\mathcal{A}}_g, \omega^k\otimes\bigslant{\OO_{\overline{\mathcal{A}}_g}}{\mathcal{J}^m})\to\varprojlim_m H^1(\overline{\mathcal{A}}_g,\omega^k\otimes\mathcal{J}^m)\]
 equals the zero homomorphism. This begs the question of whether already the abelian group $\varprojlim_m H^1(\overline{\mathcal{A}}_g,\omega^k\otimes\mathcal{J}^m)$ itself is equal to zero, which we pose as follows.

 \begin{que}\label{q1}
Given $k\in\mathbb{N}$, does the cohomological vanishing
\begin{equation*}\label{vanlim}
    \varprojlim_m H^1(\overline{\mathcal{A}}_g,\omega^k\otimes\mathcal{J}^m)=0
\end{equation*}
hold?
\end{que}

An argument which answers Question~\ref{q1} positively would thus provide an alternative proof of the modularity result of Bruinier--Raum when working over~$\mathbb{C}$, as well as an alternative proof of Kramer's result when working over $\mathbb{Z}$. The main guiding principle of this paper is then to investigate Question~\ref{q1}, as well as its following refinement.

\begin{que}\label{q2}
  For which pairs $(k,m)\in\mathbb{N}^2$ does the cohomological vanishing
  \begin{equation*}\label{vanfin}
      H^1(\overline{\mathcal{A}}_g,\omega^k\otimes\mathcal{J}^m)=0
  \end{equation*} hold?
\end{que}

\subsection{Organization and main results}
Sections~\ref{chapprep} and \ref{ag_ref} are of a preparatory nature, their main objective being to lay out notation and recall necessary concepts and tools. They provide no original results. In them, we recall the basic notions in the theory of Siegel modular forms, with an emphasis on Fourier--Jacobi expansions, first in the classical analytic setting over $\mathbb{C}$ and then in the arithmetic setting over $\mathbb{Z}$. In particular, when it comes to the arithmetic setting in Section~\ref{ag_ref}, we give a broad overview of the theory of arithmetic compactifications of the moduli stack $\mathcal{A}_g$, as introduced by Faltings--Chai in \cite{FC90}. This constitutes the core technical language that we use throughout this paper. We end Section \ref{ag_ref} in Subsection~\ref{seccohref} by explaining in detail, with the aid of Grothendieck's existence theorem on formal functions, how the modularity problem for formal Fourier--Jacobi series can be given a cohomological interpretation.

In Section~\ref{chc} we work over $\mathbb{C}$. We begin in Subsection~\ref{projinv} by recalling how one constructs the quotient of a projective scheme by a finite group of automorphisms. In Subsection~\ref{ratsingg2}, we exhibit the minimal compactification of $\mathcal{A}_2$, denoted $\mathcal{A}^*_2$, as the quotient of a smooth projective variety by a finite group of automorphisms, from which we deduce that $\mathcal{A}^*_2$ has only rational singularities. As an interesting application of this fact, we observe in Corollary~\ref{surjeq2} that, in the case $g=2$, a positive answer to Question~\ref{q1} is equivalent to the surjectivity of the Fourier--Jacobi homomorphism.

In Section~\ref{chz} we work over $\mathbb{Z}$, and we consider the moduli stack $\mathcal{A}_{g,n}$ of principally polarized abelian schemes $A\to S$ of relative dimension $g$, where $S$ is some base scheme, with a principal level structure $n\geq 3$. In Subsection~\ref{secvalk} we prove a cohomological vanishing result (Theorem~\ref{largek}) for sufficiently large weights, thus obtaining a partial answer to Question~\ref{q2} in the level $n\geq 3$ case. This allows us to show, in Subsection~\ref{characz}, that for sufficiently large $k$ and with a level $n\geq 3$ structure, a positive answer to Question~\ref{q1} is equivalent to the surjectivity of the Fourier--Jacobi homomorphism (Theorem~\ref{surjeq}). The idea of the proof is to compare the sheaf cohomology groups on a toroidal compactification $\overline{\mathcal{A}}_{g,n}$, with sheaf cohomology groups on the minimal compactification $\mathcal{A}^*_{g,n}$, and then use Serre's cohomological criterion of ampleness.
\subsection{Acknowledgements}
We deeply thank Jürg Kramer, for his suggestion to investigate the matters present in this paper, as well as for numerous useful and illuminating discussions. We also thank Jan Bruinier, Thomas Krämer and Gari Peralta, for their helpful observations.

\section{Siegel modular forms over \texorpdfstring{$\mathbb{C}$}{C}}\label{chapprep}
In the present section, we recall the basic definitions in the classical theories of Siegel modular forms and Jacobi forms over $\mathbb{C}$, following \cite{BGHZ08} and \cite{Zie89}. We then explain the result in \cite{BR15} regarding the modularity of symmetric formal Fourier--Jacobi series.

\subsection{Siegel modular forms}\label{siegel}
If $R$ denotes a ring and $n$ is a natural number, we will denote the set of $n\times n$ matrices with coefficients in $R$ by $M_n(R)$, and we we will denote the set of $n\times n$ symmetric matrices with coefficients in $R$ by $\Sym_n(R)$.

Let us fix an integer $g\geq 1$. Let $e_1,\dots,e_g,f_1,\dots,f_g$ be the canonical basis of the lattice $\mathbb{Z}^{2g}$, which we endow with the standard symplectic form $\langle\cdot\,,\cdot\rangle$ given by 
\[\langle e_i,e_j \rangle=0, \langle  f_i,f_j\rangle=0, \text{ and } \langle e_i,f_j \rangle=\delta_{ij}\]
for $i,j=1,\dots,g$, where $\delta_{ij}$ denotes Kronecker's delta. The symplectic group $\Sp_{2g}(\mathbb{Z})$ is then defined as the automorphism group of the symplectic lattice $(\mathbb{Z}^{2g},\langle\cdot\,,\cdot\rangle)$. More explicitly, this group is given by \[\Sp_{2g}(\mathbb{Z})=\Bigg\{   \begin{pmatrix}
    A &B \\
    C &D
    \end{pmatrix}\in M_{2g}(\mathbb{Z})\colon  AB^t=BA^t, CD^t=DC^t, AD^t-BC^t=1_g \Bigg\},\] where $1_g$ denotes the $g\times g$ identity matrix. We refer to $\Gamma_g\coloneqq\Sp_{2g}(\mathbb{Z})$ as the \textit{Siegel modular group}. 

    The \textit{Siegel upper half-space} $\HH_g$ of degree $g$ is defined as the set of $g\times g$ symmetric matrices over $\mathbb{C}$ whose imaginary part is positive definite, namely, \[\HH_g\coloneqq\{
Z\in\Sym_g(\mathbb{C})\mid \Ima(Z)>0\}.\]  We then consider the action of $\Gamma_g$ on $\HH_g$ given by fractional linear transformations, that is, \[\gamma\cdot Z\coloneqq(AZ+B)(CZ+D)^{-1}\] for each $Z\in\HH_g$ and $\gamma=\begin{psmallmatrix}
    A &B \\
    C &D
    \end{psmallmatrix}\in\Gamma_g$. Given an integer $k\in\mathbb{Z}$, a holomorphic function $f\colon \HH_g\to\mathbb{C}$ is called a \textit{Siegel modular form of genus} $g$ \textit{and weight} $k$, 
    if
    \begin{enumerate}
        \item[(i)] it satisfies the functional equation \[f(\gamma\cdot Z)=\det(CZ+D)^kf(Z)\] for all $Z\in\HH_g$, $\gamma=\begin{psmallmatrix}
    A &B \\
    C &D
    \end{psmallmatrix}\in\Gamma_g$, and
    \item[(ii)] it has a Fourier expansion of the form

    \[f(Z)= \sum_{\substack{0\leq N\in\Sym_g(\mathbb{Q})\\N \text{ half-integral}}}c(N)e^{2\pi i\tr(NZ)}\]
    with $c(N)\in\mathbb{C}$. 
    \end{enumerate}  Here, a symmetric matrix $N\in \Sym_g(\mathbb{Q})$ is called $\textit{half-integral}$ if $2N\in\Sym_g(\mathbb{Z})$ and its diagonal entries are even. We point out that, when $g\geq 2$, condition (ii) follows from condition (i) due to Koecher's principle \cite[Satz 2]{Koe54}.

    We note here that, by thinking of the elements of $\mathbb{Z}^g$ as column vectors, each $Z\in\HH_g$ determines a $g$-dimensional principally polarized abelian variety $A_Z=\mathbb{C}^g/(\mathbb{Z}^g+Z\mathbb{Z}^g)$, and $A_Z\cong A_{Z'}$ if and only if $Z'=\gamma\cdot Z$ for some $\gamma\in\Gamma_g$. In fact, the orbit space $\Gamma_g\backslash\HH_g$ can be interpreted as the moduli space of principally polarized abelian varieties of dimension $g$. Later on, when we pass to the arithmetic-geometric setting in Section~\ref{ag_ref}, we will recall how this allows one to give Siegel modular forms a geometric interpretation.
    
    Siegel modular forms of weight $k$ constitute a finite dimensional $\mathbb{C}$-vector space, which we denote by $M_k({\Gamma_g})$. Moreover, the product of two Siegel modular forms of weight $k$ and $k'$, respectively, is a Siegel modular form of weight $k+k',$ giving \[M_\bullet(\Gamma_g)\coloneqq\bigoplus_{k\in\mathbb{Z}}M_k(\Gamma_g)\] the structure of a graded $\mathbb{C}$-algebra, which is known to be finitely generated \cite[Theorem~10.14]{BB66}.

    The Fourier expansion of a Siegel modular form $f\in M_k(\Gamma_g)$ allows us to define its \textit{vanishing order}, as follows. We say that a half-integral symmetric matrix $N\in \Sym_g(\mathbb{Q})$ \textit{represents} a rational number $o\in\mathbb{Q}$, if there exists $v\in\mathbb{Z}^g$ such that $v^tNv=o$. In case $f\neq 0$, we then define its vanishing order as \[\ord(f)\coloneqq\inf \{o\in\mathbb{Q} \mid \exists N \textnormal{ representing } o \textnormal{ such that } c(N)\neq 0\},\] and, if $f=0$, we put $\ord(f)\coloneqq \infty$. For each $o\in\mathbb{Q}$ we let \[M_k(\Gamma_g)[o]\coloneqq\{f\in M_k(\Gamma_g)\mid\ord(f)\geq o\},\] which is a vector subspace of $M_k(\Gamma_g)$. A classical result (see, e.g., \cite[Proposition~1.4]{BR15}) states that $M_k(\Gamma_g)[o]=0$ for all sufficiently large $o$, depending on $g$ and $k$.

Now, let $f\in M_k(\Gamma_g)$ be a Siegel modular form and let $0\leq l\leq g$ be an integer. If we write an element $Z\in\HH_g$ as \[Z=\begin{pmatrix}
    \tau &z^t \\
    z &\tau'
    \end{pmatrix}\]
with $\tau\in\HH_{g-l},\tau'\in\HH_{l}$ and $z\in\mathbb{C}^{l\times (g-l)}$, we notice that $f$ is invariant under the linear transformation $\tau'\mapsto\tau'+1_l$. Indeed, this follows from the definition of a Siegel modular form applied to the matrix \[\begin{pmatrix}
    1_g &B \\
    0 &1_g
    \end{pmatrix}\in\Gamma_g,\] where $B=\begin{psmallmatrix}
    0 &0 \\
    0 &1_l
    \end{psmallmatrix}.$ We thus obtain a partial Fourier expansion of $f$ with respect to the variable $\tau'$, given by \[f(Z)=\sum_{\substack{0\leq m\in\Sym_l(\mathbb{Q})\\m \text{ half-integral}}}f_m(\tau,z)e^{2\pi i\tr(m\tau')},\]
where each function $f_m\colon \HH_{g-l}\times\mathbb{C}^{l\times (g-l)}\to\mathbb{C}$ is holomorphic. This is called the \textit{Fourier--Jacobi expansion of cogenus} $l$ \textit{of} $f$. The functions $f_m$ inherit some functional equations from the ones satisfied by $f$, which will motivate Definition~\ref{Jacdef} below.

\subsection{Jacobi forms}\label{subsecjacobic}

Let us consider the discrete Heisenberg group \[H_\mathbb{Z}^{(g-l,l)}\coloneqq\{(\lambda,\mu,\chi)\in\mathbb{Z}^{l\times(g-l)}\times\mathbb{Z}^{l\times(g-l)}\times\mathbb{Z}^{l\times l}\mid\chi+\mu\lambda^t \text{ is symmetric}\},\] whose group operation is given by 
\[(\lambda,\mu,\chi)\circ(\lambda',\mu',\chi')=(\lambda+\lambda',\mu+\mu',\chi+\chi'+\lambda\mu'^t-\mu\lambda'^t).\] Notice that $0=(0,0,0)$ is its neutral element. 

We can embed the discrete Heisenberg group into the Siegel modular group~$\Gamma_g$, via the map given by the assignment
\[(\lambda,\mu,\chi)\mapsto\begin{pmatrix}
    1_{g-l} &0 &0 &\mu^t \\
    \lambda &1_l &\mu &\chi \\
    0 &0 &1_{g-l} &-\lambda^t \\
    0 &0 &0 &1_l
    \end{pmatrix}.\] We can also embed $\Gamma_{g-l}$ into $\Gamma_{g}$, via the map given by the assignment

    \[\begin{pmatrix}
    A &B \\
    C &D
    \end{pmatrix}\mapsto\begin{pmatrix}
    A &0 &B &0 \\
    0 &1_l &0 &0 \\
    C &0 &D &0 \\
    0 &0 &0 &1_l
    \end{pmatrix}.\] These two subgroups of $\Gamma_g$ have trivial intersection; the \textit{Jacobi group} is thus defined as the semidirect product $G_\mathbb{Z}^{(g-l,l)}\coloneqq\Gamma_{g-l}\ltimes H_\mathbb{Z}^{(g-l,l)}$. We then consider the action of $G_\mathbb{Z}^{(g-l,l)}$ on $\HH_{g-l}\times\mathbb{C}^{l\times (g-l)}$, given by
    \[(\gamma,\zeta)\bullet (\tau,z)\coloneqq (\gamma\cdot\tau,(z+\lambda\tau+\mu)(C\tau+D)^{-1})\] for each $(\tau, z)\in\HH_{g-l}\times\mathbb{C}^{l\times (g-l)}$, $\gamma=\begin{psmallmatrix}
    A &B \\
    C &D
    \end{psmallmatrix}\in\Gamma_{g-l}$ and $\zeta=(\lambda,\mu,\chi)\in H_\mathbb{Z}^{(g-l,l)}$. 

    \begin{defin}\label{Jacdef}
        Let $k$ be an integer and let $m\in\Sym_l(\mathbb{Q})$ be a positive semi-definite, half-integral symmetric matrix. A holomorphic function \[\phi\colon \HH_{g-l}\times\mathbb{C}^{l\times (g-l)}\to\mathbb{C}\] is called a \textit{Jacobi form of genus} $g-l$, \textit{weight} $k$ \textit{and index} $m$ if
        \begin{enumerate}
            \item [(i)] it satisfies the functional equation \[\phi((\gamma,0)\bullet(\tau,z))=\det(C\tau+D)^ke^{2\pi i\tr(mz(C\tau+D)^{-1}Cz^t)}\phi(\tau,z)\] for all $\gamma=\begin{psmallmatrix}
    A &B \\
    C &D
    \end{psmallmatrix}\in\Gamma_{g-l}$ and $(\tau,z)\in\HH_{g-l}\times\mathbb{C}^{l\times (g-l)}$,
            \item[(ii)] it satisfies the functional equation \[\phi((1,\zeta)\bullet(\tau,z))=e^{-2\pi i\tr(m(\lambda\tau\lambda^t+2\lambda z^t+\chi+\mu\lambda^t))}\phi(\tau,z)\] for all $\zeta=(\lambda,\mu,\chi)\in H_\mathbb{Z}^{(g-l,l)}$ and $(\tau,z)\in\HH_{g-l}\times\mathbb{C}^{l\times (g-l)}$, and
            \item[(iii)] it has a Fourier expansion of the form \[\phi(\tau,z)=\sum_{\substack{0\leq n\in\Sym_{g-l}(\mathbb{Q})\\n \text{ half-integral}}}\sum_{r\in\mathbb{Z}^{(g-l)\times l}}c(n,r)e^{2\pi i\tr(s^{-1}n\tau+rz)}\] for a suitable $0\neq s\in\mathbb{Z}$, such that $c(n,r)\neq 0$ only if the matrix \[\begin{pmatrix}
    s^{-1}n &\frac{1}{2}r \\
    \frac{1}{2}r^t &m
    \end{pmatrix}\] is positive semi-definite.
        \end{enumerate}
    \end{defin}
\noindent As was the case for Siegel modular forms, there is a Koecher principle \cite[Lemma~1.6]{Zie89} for Jacobi forms; namely, if $g-l\geq 2$, then the conditions (i) and (ii) imply condition (iii).

Jacobi forms of genus $g-l$, weight $k$ and index $m$ constitute a finite dimensional $\mathbb{C}$-vector space \cite[Theorem~1.8]{Zie89}, which we denote by $J_{k,m}(\Gamma_{g-l})$. Moreover, the product of two Jacobi forms $\phi\in J_{k,m}(\Gamma_{g-l})$ and $\phi'\in J_{k',m'}(\Gamma_{g-l})$ is a Jacobi form of weight $k+k'$ and index $m+m'$, giving \[J_{\bullet,\bullet}(\Gamma_{g-l})\coloneqq\bigoplus_{k,m}J_{k,m}(\Gamma_{g-l})\] the structure of a bigraded $\mathbb{C}$-algebra. 

\begin{rmk}
    The $\mathbb{C}$-algebra $J_{\bullet,\bullet}(\Gamma_{g-l})$ is known not to be finitely generated. Moreover, upon fixing $k,m>0$, even the subalgebra $\bigoplus_{s}J_{sk,sm}(\Gamma_{g-l})$ is not finitely generated; this is the main theorem in \cite{BBHJ22}.
\end{rmk}

As we have done for Siegel modular forms, we can also define the \textit{vanishing order of} a Jacobi form $\phi\in J_{k,m}(\Gamma_{g-l})$ via its Fourier expansion. Namely, if $\phi\neq 0$, we define \[\ord(\phi)\coloneqq\inf\{o\in\mathbb{Q}\mid\exists n,r \textnormal{ such that } c(n,r)\neq 0 \textnormal{ with } n \textnormal{ representing }o\}\] and, if $\phi=0$, we put $\ord(\phi)\coloneqq\infty$. For each $o\in\mathbb{Q}$ we let \[J_{k,m}(\Gamma_{g-l})[o]\coloneqq\{\phi\in J_{k,m}(\Gamma_{g-l})\mid\ord(\phi)\geq o\},\] which is a vector subspace of $J_{k,m}(\Gamma_{g-l})$. In the case that $l=1$, it can be shown that $J_{k,m}(\Gamma_{g-1})[m]=0$ for all sufficiently large $m$, depending on $g$ and $k$. More precisely, if \[\varrho(f)\coloneqq\frac{k}{\ord(f)}\] denotes the so-called \textit{slope of} a non-zero Siegel modular form $f\in M_k(\Gamma_g)$, and we put \[\varrho_g\coloneqq\inf_{f\in M_\bullet(\Gamma_g)\setminus\{0\}}\varrho(f),\] then one has the following result.

\begin{prop}\label{jacvanord}
    If $m\in\mathbb{N}$ satisfies $m>\frac{4}{3}\frac{k}{\varrho_{g-1}}$, then \[J_{k,m}(\Gamma_{g-1})[m]=0.\]
\end{prop}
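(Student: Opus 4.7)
I would argue by contradiction. Suppose there exists a non-zero $\phi \in J_{k,m}(\Gamma_{g-1})[m]$ with $m > \tfrac{4}{3}\tfrac{k}{\varrho_{g-1}}$. The strategy is to construct, from $\phi$, a non-zero Siegel modular form of genus $g-1$ whose slope is at most $\tfrac{4}{3}\tfrac{k}{m}$, and hence strictly below $\varrho_{g-1}$, contradicting the definition of the latter.

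The first candidate is $f_0(\tau) \coloneqq \phi(\tau, 0)$. Setting $z = 0$ in the functional equation (i) of Definition~\ref{Jacdef}, the exponential factor collapses to~$1$, so $f_0$ obeys the Siegel transformation law of weight $k$ in genus $g-1$; its Fourier expansion $f_0(\tau) = \sum_n \bigl(\sum_r c(n,r)\bigr) e^{2\pi i \tr(s^{-1}n\tau)}$ exhibits $f_0$ as a Siegel modular form (on a suitable finite-index subgroup of $\Gamma_{g-1}$, if $s>1$) satisfying $\ord(f_0) \geq \ord(\phi) \geq m$. If $f_0 \neq 0$, the slope bound $\varrho_{g-1} \leq \varrho(f_0) \leq k/m$ already yields the stronger conclusion $m \leq k/\varrho_{g-1}$, contradicting the hypothesis.

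The substantive case is $f_0 \equiv 0$, which can occur for parity reasons, e.g., when $k(g-1)$ is odd and the transformation law under $-I \in \Gamma_{g-1}$ forces $\phi$ to be odd in $z$. One then passes to higher Taylor coefficients of $\phi$ in $z$ around $z = 0$. The naive derivatives $\partial_z^{\alpha}\phi(\tau,0)$ are not modular, but classical Cohen--Rankin--Cohen (heat-operator) corrections turn them into genuine Siegel modular forms $D_\alpha\phi \in M_{k+|\alpha|}(\Gamma_{g-1})$, with $D_0\phi = f_0$ and each $D_\alpha\phi$ satisfying $\ord(D_\alpha\phi) \geq m$ (its Fourier coefficient at index~$n$ being an $r$-linear combination of the $c(n,r)$). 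A theta-decomposition argument shows that the family $\{D_\alpha\}_\alpha$ is jointly injective on $J_{k,m}(\Gamma_{g-1})$, so there is a least $\nu_0$ for which some $D_\alpha\phi$ with $|\alpha| = \nu_0$ is non-zero; the slope bound applied to such a $D_\alpha\phi$ then produces $m \leq (k+\nu_0)/\varrho_{g-1}$.

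The hard part is the uniform upper bound $\nu_0 \leq k/3$, which together with the preceding inequality yields $m \leq \tfrac{4}{3}\tfrac{k}{\varrho_{g-1}}$ and the desired contradiction. Establishing this quantitative bound on the maximum $z$-vanishing order of a non-zero Jacobi form subject to $\ord(\phi) \geq m$ is the analytic core of the proposition, and I expect it to come from a careful analysis of the theta decomposition of $\phi$: the positive semi-definiteness of $\begin{psmallmatrix} s^{-1}n & r/2 \\ r^t/2 & m \end{psmallmatrix}$ from condition (iii) of Definition~\ref{Jacdef} tightly constrains how the coefficients $c(n,r)$ for fixed $n$ can cancel in the sums defining the $D_\alpha\phi(\tau,0)$, forcing the cancellation to fail once $\nu_0 > k/3$.
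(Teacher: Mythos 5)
The paper proves this proposition by pure citation to Lemma~2.7 of Bruinier--Raum, so the only question is whether your outline constitutes a complete alternative proof. It does not: the entire weight of the argument rests on the claimed uniform bound $\nu_0 \leq k/3$ on the $z$-vanishing order, which you acknowledge is ``the analytic core of the proposition'' but then only say you ``expect it to come from a careful analysis of the theta decomposition.'' An expectation is not a proof, and here the gap is not a routine verification: the numerology is in fact suspicious. A vanishing order of a Jacobi form $\phi$ along the zero section $z=0$ is governed by the index, not the weight; for scalar index $m$ in genus $g-1$ the theta decomposition has $(2m)^{g-1}$ components, and the standard joint-injectivity statement for the (heat-corrected) Taylor coefficients bounds $\nu_0$ in terms of $m$, not $k$. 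If you feed such a bound (say $\nu_0 \leq 2m$ in the elliptic case) into your own chain of inequalities, you obtain $m \leq (k+2m)/\varrho_{g-1}$, i.e.\ $m(\varrho_{g-1}-2)\leq k$. This implies the stated proposition only when $\varrho_{g-1}\geq 8$, and fails to do so when $\varrho_{g-1}<8$, which happens for larger $g$ (the slopes $\varrho_g$ decrease). So either your bound $\nu_0\leq k/3$ is genuinely new and nontrivial and must be proved, or the approach needs to be replaced by whatever Bruinier--Raum actually do (which, by the shape of the constant $4/3$ and the dependence on $\varrho_{g-1}$ alone, is likely a more global argument rather than a Taylor-coefficient count). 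A secondary but real point: when $g-1$ is small enough that the restriction $\phi(\tau,0)$ or its Taylor corrections live only on a finite-index subgroup $\Gamma'<\Gamma_{g-1}$, the quantity $\varrho_{g-1}$ as defined controls slopes for $\Gamma_{g-1}$-forms, not $\Gamma'$-forms, so the slope bound you invoke would also need to be justified in that setting.
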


\begin{proof}
    See, e.g., \cite[Lemma~2.7]{BR15}.
\end{proof}

\subsection{Symmetric formal Fourier--Jacobi series}\label{sffjs}

In their paper \cite{BR15}, Bruinier and Raum determine the kind of formal series of Jacobi forms which arise as the Fourier--Jacobi expansion of a Siegel modular form. A necessary condition, which turns out to be sufficient, is obtained by noticing that the Fourier coefficients of a Siegel modular form satisfy the following symmetry condition.

\begin{prop}\label{SymmetryCondition}
    Let $f\in M_k(\Gamma_g)$ be a Siegel modular form with Fourier expansion \[f(Z)= \sum_{\substack{0\leq N\in\Sym_g(\mathbb{Q})\\N \text{ half-integral}}}c(N)e^{2\pi i\tr(NZ)}.\] Then $c(u^tNu)=\det(u)^kc(N)$ for each half-integral $0\leq N\in\Sym_g(\mathbb{Q})$ and for every $u\in\GL_g(\mathbb{Z})$. 
\end{prop}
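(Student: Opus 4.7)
The plan is to apply the functional equation defining a Siegel modular form to a carefully chosen element of $\Gamma_g$ associated with $u\in\GL_g(\mathbb{Z})$, and then read off the relation on Fourier coefficients by comparing the two resulting Fourier expansions term by term.

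First I would verify that, for any $u\in\GL_g(\mathbb{Z})$, the block matrix
\[\gamma_u\coloneqq\begin{pmatrix} u & 0 \\ 0 & (u^t)^{-1}\end{pmatrix}\]
lies in $\Gamma_g$: the conditions $AB^t=BA^t$ and $CD^t=DC^t$ are trivial since $B=C=0$, while $AD^t-BC^t=u\,u^{-1}=1_g$. For this $\gamma_u$, a direct computation gives $\gamma_u\cdot Z=(uZ)((u^t)^{-1})^{-1}=uZu^t$ and $\det(CZ+D)^k=\det(u)^{-k}$. The functional equation (i) in the definition of a Siegel modular form therefore reads
\[f(uZu^t)=\det(u)^{-k}\,f(Z).\]

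Now I would expand both sides using the Fourier expansion of $f$. On the left, using $\tr(NuZu^t)=\tr(u^tNu\,Z)$, one obtains
\[f(uZu^t)=\sum_{N}c(N)\,e^{2\pi i\tr(u^tNu\,Z)}.\]
Performing the change of summation variable $N\mapsto M\coloneqq u^tNu$, which is a bijection on the set of half-integral positive semi-definite symmetric matrices (since $u\in\GL_g(\mathbb{Z})$ and the form $v\mapsto v^tMv$ only gets reindexed), the left-hand side becomes $\sum_{M}c((u^t)^{-1}Mu^{-1})\,e^{2\pi i\tr(MZ)}$, while the right-hand side equals $\det(u)^{-k}\sum_{M}c(M)\,e^{2\pi i\tr(MZ)}$. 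Comparing coefficients of $e^{2\pi i\tr(MZ)}$ yields
\[c\bigl((u^t)^{-1}Mu^{-1}\bigr)=\det(u)^{-k}c(M).\]
Finally, replacing $u$ by $u^{-1}$ (which is again in $\GL_g(\mathbb{Z})$) gives the desired identity $c(u^tNu)=\det(u)^k c(N)$.

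There is no serious obstacle; the only point requiring minimal care is the justification that the Fourier coefficients are uniquely determined by $f$, so that the term-by-term comparison is legitimate. This is standard, since the exponentials $\{e^{2\pi i\tr(NZ)}\}_{N}$ are linearly independent as holomorphic functions on $\HH_g$, being obtained via a multivariable Fourier transform in the real part of $Z$.
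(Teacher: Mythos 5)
Your argument is correct and follows essentially the same route as the paper's proof: conjugate by $\gamma_u=\begin{psmallmatrix} u & 0 \\ 0 & (u^t)^{-1}\end{psmallmatrix}\in\Gamma_g$, apply the functional equation, reindex the Fourier series using $\tr(NuZu^t)=\tr(u^tNuZ)$, and compare coefficients (the linear independence you invoke is exactly the standard justification). The only cosmetic difference is that the paper writes the automorphy factor as $\det(u)^k$ directly, implicitly using $\det(u)=\pm1$ so that $\det(u)^{-k}=\det(u)^k$, whereas you track the exponent $-k$ carefully and recover the stated sign by the final substitution $u\mapsto u^{-1}$; both are fine.
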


\begin{proof}
    We notice that $\GL_g(\mathbb{Z})$ acts on the index set \[\{N\in \Sym_g(\mathbb{Q})\mid N\geq 0, N \text{ half-integral}\}\] via $N\mapsto u^tNu$. Hence, we have
\begin{align*}
    f(uZu^t)&= \sum_{\substack{0\leq N\in\Sym_g(\mathbb{Q})\\N \text{ half-integral}}}c(N)e^{2\pi i\tr(NuZu^t)}\\
    &=\sum_{\substack{0\leq N\in\Sym_g(\mathbb{Q})\\N \text{ half-integral}}}c(N)e^{2\pi i\tr(u^tNuZ)}\\
    &=\sum_{\substack{0\leq N\in\Sym_g(\mathbb{Q})\\N \text{ half-integral}}}c(u^tNu)e^{2\pi i\tr(NZ)}.
\end{align*} Furthermore, if for each $u\in\GL_g(\mathbb{Z})$ we let \[\gamma_u\coloneqq\begin{pmatrix}
    u & 0\\
    0 &(u^{-1})^t
    \end{pmatrix}\in \Gamma_g,\] then the functional equation of the Siegel modular form $f$ applied to the matrix $\gamma_u$ gives us $f(uZu^t)=f(\gamma_u\cdot z)=\det(u)^kf(Z)$. Therefore, a comparison of the Fourier expansions yields $c(u^tNu)=\det(u)^kc(N)$ for each $N$ and for every $u\in\GL_g(\mathbb{Z})$.
\end{proof}

If the Fourier--Jacobi expansion of cogenus $l$ of the Siegel modular form $f\in M_k(\Gamma_g)$ is given by \begin{equation}\label{FourierJacobiExpansion}
    f(Z)=\sum_{\substack{0\leq m\in\Sym_l(\mathbb{Q})\\m \text{ half-integral}}}f_m(\tau,z)e^{2\pi i\tr(m\tau')},
\end{equation} then the functional equations satisfied by $f$ induce on each $f_m$ the transformation behavior of a Jacobi form of genus $g-l$, weight $k$ and index $m$, that is, $f_m\in J_{k,m}(\Gamma_{g-l})$. Each Jacobi form $f_m$ has a Fourier expansion of the form 
\begin{equation*}
    f_m(\tau,z)=\sum_{\substack{0\leq n\in\Sym_{g-l}(\mathbb{Q})\\n \text{ half-integral}}}\sum_{r\in\mathbb{Z}^{(g-l)\times l}}c_m(n,r)e^{2\pi i\tr(n\tau+rz)}
\end{equation*}
 and, if we substitute each of these into (\ref{FourierJacobiExpansion}), we get back the Fourier expansion \[f(Z)= \sum_{\substack{0\leq N\in\Sym_g(\mathbb{Q})\\N \text{ half-integral}}}c(N)e^{2\pi i\tr(NZ)},\] where $c(N)=c_m(n,r)$ for $N=\begin{psmallmatrix}
    n &\frac{1}{2}r \\
    \frac{1}{2}r^t &m
    \end{psmallmatrix}$. Thus, the symmetry condition in the Fourier coefficients of $f$ from Proposition~\ref{SymmetryCondition} translates into a symmetry condition of the Fourier coefficients of each $f_m$.

    \begin{defin}\label{formalsymdef}
        Let \[f(Z)=\sum_{\substack{0\leq m\in\Sym_l(\mathbb{Q})\\m \text{ half-integral}}}f_m(\tau,z)e^{2\pi i\tr(m\tau')}\] be a formal series with $f_m\in J_{k,m}(\Gamma_{g-l})$ for each $m$. We define its Fourier coefficients as \[c(N)\coloneqq c_m(n,r), \ \ \ \ N=\begin{pmatrix}
    n &\frac{1}{2}r \\
    \frac{1}{2}r^t &m
    \end{pmatrix},\] where $c_m(n,r)$ are the Fourier coefficients of the Jacobi form $f_m$. If the equality \[c(u^tNu)=\det(u)^kc(N)\] holds for every half-integral $0\leq N\in\Sym_g(\mathbb{Q})$ and for every $u\in\GL_g(\mathbb{Z})$, we say that $f$ is a \textit{symmetric} formal Fourier--Jacobi series of genus $g$, cogenus $l$ and weight $k$. 
    \end{defin}

    The symmetric Fourier--Jacobi series of genus $g$, cogenus $l$ and weight $k$ constitute a $\mathbb{C}$-vector space, which we denote by $\FM_k^{(g,l)}$. According to \cite[Proposition~2.2]{BR15}, the graded ring \[\FM_\bullet^{(g,l)}\coloneqq\bigoplus_{k\in\mathbb{Z}}\FM_k^{(g,l)}\] is an algebra over $M_\bullet(\Gamma_g)$.

    The vanishing order of a symmetric formal Fourier--Jacobi series $f\in\FM^{(g,1)}_k$ of cogenus $1$ is defined in the same way as that of a Siegel modular form, namely, as
    \[\ord(f)\coloneqq\inf \{m\in\mathbb{Q} \mid \exists N \textnormal{ representing } m \textnormal{ such that } c(N)\neq 0\}\] in case $f\neq 0$, and, if $f=0$, we put $\ord(f)\coloneqq \infty$. For each $m\in\mathbb{Q}$ we then let
    \[\FM^{(g,1)}_k[m]\coloneqq\{f\in\FM^{(g,1)}_k\mid\ord(f)\geq m\}.\] As a consequence to Proposition~\ref{jacvanord}, we have the following result.

    \begin{cor}\label{vanordfm}
         If $m\in\mathbb{N}$ satisfies $m>\frac{4}{3}\frac{k}{\varrho_{g-1}}$, then \[\FM^{(g,1)}_k[m]=0.\]
    \end{cor}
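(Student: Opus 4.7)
The plan is to combine Proposition~\ref{jacvanord}, applied termwise to the Jacobi coefficients $f_{m'}$, with the $\GL_g(\mathbb{Z})$-symmetry from Definition~\ref{formalsymdef}, through an induction on the cogenus-one index $m'$. Write $f=\sum_{m'\geq 0}f_{m'}\,e^{2\pi i m'\tau'}\in\FM^{(g,1)}_k[m]$ with $m>\tfrac{4}{3}\tfrac{k}{\varrho_{g-1}}$; I want to prove $f_{m'}=0$ for every $m'\geq 0$.

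First, I would dispose of the small indices $m'<m$ directly from $\ord(f)\geq m$. For any coefficient $c_{m'}(n,r)=c(N)$ with $N=\begin{psmallmatrix}n & r/2\\ r^t/2 & m'\end{psmallmatrix}$, the primitive vector $e_g\in\mathbb{Z}^g$ witnesses that $N$ represents $m'$, so $m'<m$ forces $c(N)=0$ for every $(n,r)$, and hence $f_{m'}=0$ whenever $m'<m$.

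For the inductive step, fix $m'\geq m$ and assume $f_{m''}=0$ for all $m''<m'$. Since $m'\geq m>\tfrac{4}{3}\tfrac{k}{\varrho_{g-1}}$, Proposition~\ref{jacvanord} will yield $f_{m'}=0$ as soon as one knows $f_{m'}\in J_{k,m'}(\Gamma_{g-1})[m']$. To verify this, suppose some primitive $w\in\mathbb{Z}^{g-1}$ satisfies $w^tnw=o<m'$; the task is to show $c_{m'}(n,r)=0$ for every $r$. Set $\tilde w=\begin{psmallmatrix}w\\0\end{psmallmatrix}\in\mathbb{Z}^g$, extend this primitive vector to a $\mathbb{Z}$-basis of $\mathbb{Z}^g$, and let $u\in\GL_g(\mathbb{Z})$ have $\tilde w$ as its last column. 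Then $u^tNu$ has bottom-right entry $\tilde w^tN\tilde w=o$, so its Fourier coefficient $c(u^tNu)$ is one of the coefficients of $f_o$. By the inductive hypothesis, or by the first step when $o<m$, we have $f_o=0$; hence $c(u^tNu)=0$, and the symmetry condition then gives $c(N)=\det(u)^{-k}c(u^tNu)=0$, as required.

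The subtlety I anticipate is the bootstrapping: the hypothesis $\ord(f)\geq m$ alone only yields $\ord(f_{m'})\geq m$, which is too weak to feed into Proposition~\ref{jacvanord} at index $m'$ once $m'>m$. The inductive use of the $\GL_g(\mathbb{Z})$-symmetry, transporting a low value $o<m'$ represented by $n$ into a vanishing statement about a previously handled coefficient $f_o$, is what supplies the missing input. A minor technical point is to keep $\tilde w$ primitive in $\mathbb{Z}^g$, so that the extension to a $\mathbb{Z}$-basis, and hence the matrix $u$, actually exists.
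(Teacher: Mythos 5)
Your proof is correct and takes essentially the same route as the paper: both rely on Proposition~\ref{jacvanord} together with the $\GL_g(\mathbb{Z})$-symmetry to propagate vanishing from lower to higher Fourier--Jacobi indices. The paper packages the iteration as the identity $\ker\bigl(\FM^{(g,1)}_k[m]\to J_{k,m}(\Gamma_{g-1})[m]\bigr)=\FM^{(g,1)}_k[m+1]$ followed by taking the intersection over $m$, whereas you unfold it into a direct induction on the cogenus-one index $m'$; your symmetry-transport step (moving a representation $w^tnw=o<m'$ into a coefficient of the already-vanished $f_o$) is precisely the content behind the paper's unproved kernel claim.
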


    \begin{proof}
        For every $m\in\mathbb{N}$, the map \[\FM^{(g,1)}_k[m]\to J_{k,m}(\Gamma_{g-1})[m] \] assigning to a symmetric formal Fourier--Jacobi series its $m$-th Fourier--Jacobi coefficient, has kernel equal to $\FM^{(g,1)}_k[m+1]$. Hence, according to Lemma~\ref{jacvanord}, we have \[\FM^{(g,1)}_k[m]=\FM^{(g,1)}_k[m']\] for every $m,m'>\frac{4}{3}\frac{k}{\varrho_{g-1}}$, which implies that \[\FM^{(g,1)}_k[m]=\bigcap_{m\in\mathbb{N}}\FM^{(g,1)}_k[m]=0\] for every $m>\frac{4}{3}\frac{k}{\varrho_{g-1}}$.
    \end{proof}

    The main theorem in \cite{BR15} is \cite[Theorem~4.1]{BR15}, which states that \[\FM_\bullet^{(g,1)}=M_\bullet(\Gamma_g).\] Furthermore, this modularity result is extended to more general Siegel modular forms of half-integer weight and to every cogenus $0<l<g$ in \cite[Theorem~4.5]{BR15}. The proof of \cite[Theorem~4.1]{BR15} may be broadly divided into three parts:
\begin{enumerate}[(1)]
    \item\label{br1} Show that $\FM_\bullet^{(g,1)}$ is finitely generated as a graded $M_\bullet(\Gamma_g)$-module.
    \item\label{br2} Show that if $f\in\FM_\bullet^{(g,1)}$ satisfies a non-trivial algebraic relation over $M_\bullet(\Gamma_g)$, then $f$ converges absolutely in an open neighborhood of the boundary divisor $\overline{\mathcal{A}}_g\setminus\mathcal{A}_g$.
    \item\label{br3} Show that such $f\in\FM_\bullet^{(g,1)}$ in fact admits an analytic continuation to the whole $\mathcal{H}_g$, and hence $f\in M_\bullet(\Gamma_g)$. 
\end{enumerate}
Here, $\overline{\mathcal{A}}_g$ denotes a toroidal compactification of the moduli space $\mathcal{A}_g=\Gamma_g\backslash \HH_g$ of principally polarized abelian varieties of dimension $g$.

In order to establish \eqref{br1}, the authors bound the asymptotic dimension of $\FM_k^{(g,1)}$ by embedding it into a product of spaces of Jacobi forms with bounded vanishing order. For the proof of \eqref{br2}, they localize at each boundary point, and deduce the convergence around that point from the fact that the local ring  at each boundary point is algebraically closed in its completion. Then, they are able to prove \eqref{br3} by employing the fact that the Picard group of the minimal compactification $\mathcal{A}_g^*$ is generated, up to torsion, by the class of the so-called \textit{Hodge line bundle}. The proof is then finished by observing that part \eqref{br1} implies that every $f\in\FM_\bullet^{(g,1)}$ satisfies a non-trivial algebraic relation over $M_\bullet(\Gamma_g)$.

We postpone the precise definitions of the minimal and toroidal compactifications of $\mathcal{A}_g$, as well as that of the Hodge line bundle, to the next section. Our reason is the following. The minimal compactification $\mathcal{A}_g^*$, also known as the Satake--Baily--Borel compactification, was first constructed in \cite{Sat56} (and, in the more general setting of locally symmetric domains, in \cite{BB66}), while toroidal compactifications were first constructed in \cite{AMRT75}. All of this is done in the analytic setting over $\mathbb{C}$. However, our intended goal is to work over $\mathbb{Z}$. Therefore, in Section~\ref{ag_ref}, we will recall the constructions of these compactifications as performed by Faltings--Chai in the arithmetic setting over $\mathbb{Z}$. Siegel modular forms over $\mathbb{Z}$ are then defined as global sections of tensor powers of the Hodge line bundle on a toroidal compactification $\overline{\mathcal{A}}_g$, and the analytic picture that has been discussed so far is recovered by base changing to $\mathbb{C}$.
\section{Siegel modular forms over \texorpdfstring{$\mathbb{Z}$}{Z}}\label{ag_ref}
In this section, we recall the construction carried out in \cite{FC90} of toroidal compactifications and the minimal compactification of the moduli stack $\mathcal{A}_g$ of principally polarized abelian schemes $A\to S$ of relative dimension $g$, where $S$ is some base scheme. This allows one to define Siegel modular forms, Jacobi forms and their various expansions over~$\mathbb{Z}$. Our reference for Jacobi forms is \cite{Kra95}. We then explain the main result in \cite{Kra22} on the modularity of formal Fourier--Jacobi series over~$\mathbb{Z}$, analogous to that of Bruinier--Raum over $\mathbb{C}$. Finally, in Subsection~\ref{seccohref}, we explain how the Fourier--Jacobi homomorphism $\FJ$, which assigns to a Siegel modular form its Fourier--Jacobi expansion of cogenus $1$, can be located within a long exact sequence of (inverse limits of) cohomology groups. It then becomes natural to inquire on the surjectivity of $\FJ$ by asking the stronger question, of whether the subsequent abelian group in the long exact sequence vanishes.

\subsection{Mumford's construction}
A crucial ingredient in the construction of toroidal compactifications of the moduli stack $\mathcal{A}_g$ is a functor known as \textit{Mumford's construction}. Furthermore, this functor is needed in order to define arithmetic analogs of the Fourier and Fourier--Jacobi expansions of a Siegel modular form. We briefly sketch what the input and output of Mumford's construction are, following \cite{Mum72} and \cite[Chapter~III]{FC90}. 

We start by recalling a few preliminary definitions from \cite[Chapter~I]{FC90}. Let $S$ be any scheme and let $G\to S$ be a group scheme, with $e\colon S\to G$ its unit section. Let $\mathcal{L}$ be an invertible sheaf on $G$ which is \textit{rigidified along} $e$, that is, endowed with an isomorphism $\mathcal{O}_S\cong e^*\mathcal{L}$. For each subset $X\subseteq\{1,2,3\}$, let $m_X\colon G\times_S G\times_S G\to G$ be the morphism defined by the assignment $(g_1,g_2,g_3)\mapsto \sum_{i\in X}g_i$. Let
\[\Theta(\LL)\coloneqq\bigotimes_{X\subseteq\{1,2,3\}}m_X^*\LL^{(-1)^{\# X}}\]
and
\[\Lambda(\LL)\coloneqq m^*\LL\otimes p_1^*\LL^{-1}\otimes p_2^*\LL^{-1},\]
where $m,p_1,p_2\colon G\times_S G\to G$ denote the group law and the two projections, respectively. A \textit{cubical structure on} $\LL$ is an isomorphism $\Theta(\LL)\cong \mathcal{O}_{G\times_S G\times_S G}$ such that it induces on $\Lambda(\LL)$ the structure of a symmetric \textit{biextension} (see \cite{Mum69}) of $G\times_S G$ by the multiplicative group scheme $\mathbb{G}_{\textnormal{m}}$ over $S$. 

A \textit{torus} over $S$ is a commutative group scheme $T\to S$ which is, locally in the étale topology, $S$-isomorphic to finitely many copies of $\mathbb{G}_{\textnormal{m}}$. We call $T\to S$ a \textit{split torus} if it is (globally) $S$-isomorphic to finitely many copies of $\mathbb{G}_{\textnormal{m}}$. An \textit{abelian scheme} over $S$ is a smooth proper group scheme $A\to S$ with geometrically connected fibres. Abelian schemes are automatically commutative. Moreover, the well known theorem of the cube states that any invertible sheaf on an abelian scheme has a canonical cubical structure \cite[Theorem~I.1.3]{FC90}.

A \textit{semi-abelian scheme} over $S$ is a smooth separated commutative group scheme $G\to S$ with geometrically connected fibres, such that for all $s\in S$ the fibre $G_s$ is an extension of an abelian scheme $A_s$ by a torus $T_s$, i.e., the sequence $0\to T_s\to G_s\to A_s\to 0$ of group schemes is exact. The \textit{rank function} $r\colon S \to\mathbb{N}$ of a semi-abelian scheme $G\to S$ is defined by letting $r(s)$ denote the dimension of the torus part $T_s$ for each $s\in S$. It can be shown \cite[Remark I.2.4 and Corollary I.2.11]{FC90} that $r$ is upper semi-continuous and, if it is locally constant, then $G$ is globally an extension of an abelian scheme $A\to S$ by a torus $T\to S$, i.e., the sequence $0\to T\to G\to A\to 0$ of group schemes is exact.

Now, let $R$ be an integrally closed noetherian ring, which is complete with respect to a radical ideal $I=\sqrt{I}\subseteq R$. Further, let $K$ be the fraction field of~$R$, $S\coloneqq\Spec R$, and $S_0\coloneqq\Spec R/I$. Mumford's construction takes as input a semi-abelian scheme $\widetilde{G}\to S$ with constant rank function $r(s)=r$, and a homomorphism $\iota\colon \mathcal{Y}\to \widetilde{G}\otimes K$, where $\mathcal{Y}$ is an étale sheaf whose fibres are isomorphic to $\mathbb{Z}^r$. It then produces a semi-abelian scheme $G\to S$ as a kind of quotient of $\widetilde{G}$ by $\mathcal{Y}$, such that the generic fibre $G_\eta$ is an abelian scheme and $G\times_S S_0$ has constant rank function. 

A more refined variant of Mumford's construction takes additional data as input. Let $\widetilde{G}\to S$ and $\iota\colon \mathcal{Y}\to \widetilde{G}\otimes K$ as before. Since $\widetilde{G}\to S$ has constant rank function, it is globally an extension of an abelian scheme by a torus, namely, there is an exact sequence \[0\to T\to\widetilde{G}\xrightarrow{\varphi}A\to 0\] of group schemes over $S$, with $T$ a torus and $A$ an abelian scheme. Let $\widetilde{\LL}$ be an invertible sheaf on $\widetilde{G}$ with a cubical structure, such that $\widetilde{\LL}\cong\varphi^*\mathcal{M}$ for some ample invertible sheaf $\mathcal{M}$ on $A$ with a cubical structure. Furthermore, an action of $\mathcal{Y}$ on $\widetilde{\LL}_\eta$ over $\iota$ satisfying a certain positivity condition is given. Then, from these data, Mumford's construction produces a pair $(G,\LL)$ as a kind of quotient of $(\widetilde{G}, \widetilde{\LL})$ by $\mathcal{Y}$, where $G\to S$ is a semi-abelian scheme and $\LL$ is an invertible sheaf on $G$, such that the generic fibre $G_\eta$ is an abelian scheme and $\LL$ is ample over $G_\eta$.

Mumford's construction is carried out in \cite{Mum72} in the case that $\widetilde{G}\to S$ is a split torus and $\mathcal{Y}=Y$ is constant (being in fact a \textit{group of periods}, that is, a subgroup $Y\subset \widetilde{G}\otimes K$ isomorphic to $\mathbb{Z}^r$). The general case is handled in \cite[Chapter III]{FC90}. Defining domain and target categories appropriately, both variants of Mumford's construction turn out to be equivalences of categories, the quasi-inverse being described in \cite[Chapter II]{FC90}. Furthermore, Mumford's construction carries over to the case when the base is a formal scheme, by way of the theory of relative schemes over formal schemes, as developed in \cite{Hak72}.

\subsection{Arithmetic compactifications of \texorpdfstring{$\mathcal{A}_g$}{Ag}}\label{torcomp}
Let $\mathcal{A}_g$ denote the moduli stack of principally polarized abelian schemes $A\to S$ of relative dimension $g$. A toroidal compactification $\overline{\mathcal{A}}_g$ of $\mathcal{A}_g$ is constructed in \cite{FC90}, depending on the choice of certain combinatorial data, which is a smooth and proper algebraic stack over $\Spec\mathbb{Z}$ containing $\mathcal{A}_g$ as an open dense algebraic substack. We recall a few details regarding this construction.

Besides Mumford's construction, another important ingredient in the construction of $\overline{\mathcal{A}}_g$ is the theory of toric varieties, also known as torus embeddings. We would like to make a couple remarks here regarding the variance of terminology. According to \cite[Appendix A]{CLS11}, the first formal definition of a toric variety and of a fan (in French, \textit{éventail}) appeared in \cite{Dem70}. This is nowadays the most widely accepted terminology. However, Faltings--Chai draw their notation from \cite{KKMS73}, where a toric variety is called a \textit{torus embedding} and a fan is called a \textit{finite rational partial polyhedral cone decomposition}. Furthermore, Faltings--Chai make ad-hoc definitions that suit their purpose of constructing $\overline{\mathcal{A}}_g$, e.g., by identifying the character group of a $g(g+1)/2$-dimensional torus with the space of integral quadratic forms in $g$ variables, as well as adding an additional $\GL_g(\mathbb{Z})$-invariance condition to their fans. Since the present paper relies heavily on \cite{FC90}, we have decided to stick to their notation, which we now recall in detail.

Let $X$ be a free abelian group of rank $g$ and let $X_\mathbb{R}\coloneqq X\otimes_{\mathbb{Z}}\mathbb{R}$. We let $B(X)$ denote the space of symmetric bilinear forms $X\times X\to\mathbb{Z}$, and $B(X_{\mathbb{R}})$ the space of symmetric bilinear forms $X_{\mathbb{R}}\times X_{\mathbb{R}}\to\mathbb{R}$. The lattice $B(X)$ provides $B(X_{\mathbb{R}})$ with an integral structure. The \textit{radical of} a symmetric bilinear form $b\in B(X_{\mathbb{R}})$ is defined as the kernel of the map $X_{\mathbb{R}}\to\Hom(X_{\mathbb{R}},\mathbb{R})$ induced by $b$. We denote by $C(X)\subset B(X_{\mathbb{R}})$ the cone of positive semi-definite symmetric bilinear forms whose radicals are defined over $\mathbb{Q}$, and by $C(X)^\circ$ its interior, consisting of the positive definite symmetric bilinear forms whose radicals are defined over $\mathbb{Q}$. The action of $\GL(X)$ on $X$ induces actions of $\GL(X)$ on $B(X)$ and $B(X_{\mathbb{R}})$, which stabilize $C(X)$ and $C(X)^\circ$, preserving the integral structure.

A $\GL(X)$-\textit{admissible polyhedral cone decomposition of} $C(X)$ is a collection $\mathfrak{C}=\{\sigma_\alpha\}_{\alpha\in J}$ of subsets $\sigma_\alpha\subset B(X_{\mathbb{R}})$, satisfying the following three properties:

\begin{enumerate}
    \item [(i)] Each $\sigma_\alpha\in\mathfrak{C}$ is a strongly convex rational polyhedral cone, namely, \[\sigma_\alpha=\mathbb{R}_{>0}\cdot v_1+\cdots+\mathbb{R}_{>0}\cdot v_k\] is the set of positive real linear combinations of some symmetric bilinear forms $v_1,\dots,v_k\in B(X)\otimes_{\mathbb{Z}}\mathbb{Q}$.

    \item [(ii)] $C(X)=\coprod_{\alpha\in J}\sigma_\alpha$, and the closure of each cone in $\mathfrak{C}$ equals the disjoint union of finitely many cones in $\mathfrak{C}$.

    \item [(iii)] $\mathfrak{C}$ is invariant under the action of $\GL(X)$ on $B(X_{\mathbb{R}})$, and the restricted action of $\GL(X)$ on $\mathfrak{C}$ has finitely many orbits.
\end{enumerate}
Furthermore, we say that $\mathfrak{C}$ is \textit{smooth} if each $\sigma_\alpha\in\mathfrak{C}$ is a smooth cone, namely, if $\sigma_\alpha=\mathbb{R}_{>0}\cdot v_1+\cdots+\mathbb{R}_{>0}\cdot v_k$ where $v_1,\dots,v_k\in B(X)$ are part of a $\mathbb{Z}$-basis of $B(X)$.

A \textit{torus embedding} over a base scheme $S$ is an $E$-equivariant open immersion $E\hookrightarrow\overline{E}$ over $S$ of a split torus $E$ into a separated scheme $\overline{E}$, with an $E$-action extending the translation action of $E$, which is fibrewise an open dense immersion.

Over the complex numbers, the construction of a normal toric variety $X_\Sigma$ over $\mathbb{C}$ from a fan $\Sigma$ yields an equivalence of categories. In the present setting, over $\mathbb{Z}$, the construction of a functor from the category of finite rational partial polyhedral
cone decompositions to the category of torus embeddings over $\mathbb{Z}$ is extant, though it is no longer an equivalence \cite[Remark IV.2.6]{FC90}. Anyhow, a smooth $\GL(X)$-admissible polyhedral cone decomposition $\mathfrak{C}$ of $C(X)$ has an associated $g(g+1)/2$-dimensional torus embedding $E\hookrightarrow\overline{E}$ over $\Spec\mathbb{Z}$. The group of characters of $E$ is given by $S^2(X)$, the space of quadratic forms on $X$.

We now provide a sketch description of the local charts that make up $\overline{\mathcal{A}}_g$, which are adapted to the combinatorial data $\mathfrak{C}$. We start with the totally degenerate case, corresponding to cones $\sigma_\alpha\in\mathfrak{C}$ such that $\sigma_\alpha\subset C(X)^\circ$. Such a cone has an associated $E$-invariant affine open subscheme $E(\sigma_\alpha)=\Spec B_\alpha\subset\overline{E}$ and the closure $Z(\sigma_\alpha)$ of its associated $E$-orbit is a closed subscheme of $E(\sigma_\alpha)$, giving rise to an ideal $I_\alpha\subseteq B_\alpha$. Let $R_\alpha$ be the $I_\alpha$-adic completion of $B_\alpha$, let $K_\alpha$ be the fraction field of $R_\alpha$, and let $S_\alpha=\Spec R_\alpha$. If we let $\widetilde{G}$ denote the $g$-dimensional split torus over $S_\alpha$ with character group $X$, a subgroup of periods $Y\subset\widetilde{G}\otimes K_\alpha$ satisfying a certain positivity condition can be given thanks to the assumption that $\sigma_\alpha\subset C(X)^\circ$. Mumford's construction, as carried out in \cite{Mum72} in the totally degenerate case, now produces a semi-abelian scheme $ \prescript{\heartsuit}{}{G}_\alpha\to S_\alpha$ whose generic fibre is a principally polarized abelian scheme.

Let now $\sigma\in\mathfrak{C}$ be arbitrary. We have just seen that in case $\sigma_\alpha\subset C(X)^\circ$, one can construct a semi-abelian scheme which degenerates to a torus. In the general case, we then need to apply Mumford's construction for a general semi-abelian scheme $\widetilde{G}$ (with constant rank function) whose abelian part is not necessarily trivial, as done in \cite[Chapter III]{FC90}. It is then necessary to first define an appropriate base scheme, which takes some work as we will now see, and will in fact be a formal scheme.

 Each symmetric bilinear form $b\in\sigma$ induces a quadratic form $q(b)\colon X_\mathbb{R}\to\mathbb{R}$, given by the assignment $x\mapsto b(x,x)$. The cone \[\sigma^\vee\coloneqq\{q(b)\mid b\in\sigma\}\subseteq S^2(X_\mathbb{R})\] is the \textit{dual cone of} $\sigma$. We let $X\to X_\sigma$ denote the smallest quotient of $X$ such that $q(b)$ factors through $X_\mathbb{R}\to X_{\sigma,\mathbb{R}}\coloneqq X_\sigma\otimes_\mathbb{Z}\mathbb{R}$ for every $b\in\sigma$. Then, each bilinear form $b\in\sigma$ can be viewed as an element of $B(X_{\sigma,\mathbb{R}})$ and, furthermore, the subspace spanned by $\sigma$ is contained in $B(X_{\sigma,\mathbb{R}})$. Let $C(X_\sigma)\subset B(X_{\sigma,\mathbb{R}})$ be the cone of positive semi-definite symmetric bilinear forms whose radicals are defined over $\mathbb{Q}$, and let $C(X_\sigma)^\circ$ be its interior, consisting of the positive definite symmetric bilinear forms whose radicals are defined over $\mathbb{Q}$. Then, by polyhedral reduction theory \cite[Chapter II]{AMRT10}, the collection \[\mathfrak{C}_\sigma\coloneqq\{\sigma'\in\mathfrak{C}\mid\sigma'\subset B(X_{\sigma,\mathbb{R}})\}\] is a $\GL(X_\sigma)$-admissible polyhedral cone decomposition of $C(X_\sigma)$, having an associated torus embedding $E_\sigma\hookrightarrow\overline{E}_{\sigma}$. 

    We now fix an integer $0\leq l\leq g$. Let $\xi$, respectively $X_\xi$, denote the equivalence class of cones $\sigma\in\mathfrak{C}$, respectively quotients $X_\sigma$ of $X$, such that $\rk X_\sigma =l$, where $\rk$ denotes rank as a $\mathbb{Z}$-module. We let \begin{align*}
        &C(X_\xi)\coloneqq \bigcup_{\substack{\sigma\in\mathfrak{C} \\ \rk(X_\sigma)=l }} C(X_\sigma),  &\mathfrak{C}_\xi\coloneqq \bigcup_{\substack{\sigma\in\mathfrak{C} \\ \rk(X_\sigma)=l }} \mathfrak{C}_\sigma,
    \end{align*}
    and we note that $\mathfrak{C}_\xi$ is then a $\GL(X_\xi)$-admissible polyhedral cone decomposition of $C(X_\xi)$, having an associated $l(l+1)/2$-dimensional torus embedding $E_\xi\hookrightarrow\overline{E}_\xi$. We have $E_\xi=B(X_\xi)\otimes\mathbb{G}_{\textnormal{m}}$, and \[\overline{E}_\xi=\bigcup_{\sigma\in\mathfrak{C}_\xi}E_\xi(\sigma)=\coprod_{\sigma\in\mathfrak{C}_\xi}Z_\xi(\sigma),\]
    where $E_\xi(\sigma)$, respectively $Z_\xi(\sigma)$, denotes the affine $E_\xi$-invariant torus embedding, respectively the $E_\xi$-orbit, associated to the cone $\sigma$. We let $\mathfrak{C}^\circ_\xi\coloneqq\{\sigma\in\mathfrak{C}_\xi\mid \sigma\subset C(X_\xi)^\circ\}$ and further consider the locally closed subscheme \[Z_\xi\coloneqq\bigcup_{\sigma\in\mathfrak{C}^\circ_\xi} Z_\xi(\sigma)\subset \overline{E}_\xi.\] 
    Faltings--Chai then construct over $\mathbf{Hom}_\mathbb{Z}(X_\xi,\mathcal{X}_{g-l})$ (which is isomorphic to the fibred product of $l$ copies of the universal abelian scheme $\mathcal{X}_{g-l}$) a suitable $E_\xi$-torsor $\mathcal{E}_\xi$, and consider 
    \begin{align*}
        &\mathcal{E}(\sigma)\coloneqq\mathcal{E}_\xi\times^{E_\xi}E_\xi(\sigma), &\mathcal{Z}(\sigma)\coloneqq\mathcal{E}_\xi\times^{E_\xi}Z_\xi(\sigma)
    \end{align*} for each $\sigma\in\mathfrak{C}_\xi$, where the above products are \textit{contraction products} of torsors (also known as \textit{contracted products}, cf. \cite[Section 2.3]{Ems17}). We then note that \[\overline{\mathcal{E}}_\xi\coloneqq\mathcal{E}_\xi\times^{E_\xi}\overline{E}_\xi=\bigcup_{\sigma\in\mathfrak{C}_\xi}\mathcal{E}(\sigma)=\coprod_{\sigma\in\mathfrak{C}_\xi} \mathcal{Z}(\sigma),\]
       containing the locally closed subscheme \[\mathcal{Z}_\xi\coloneqq\mathcal{E}_\xi\times^{E_\xi} Z_\xi=\coprod_{\sigma\in\mathfrak{C}^\circ_\xi} \mathcal{Z}(\sigma).\] We define $\widehat{\overline{\mathcal{E}}}_\xi$ to be the formal completion of $\overline{\mathcal{E}}_\xi$ along $\mathcal{Z}_\xi$. This is the formal base scheme over which we will perform Mumford's construction. We also define $\widehat{\overline{\mathcal{E}}}_\sigma$ as the formal completion of $\overline{\mathcal{E}}_\xi$ along $\mathcal{Z}(\sigma)$.

       Let $A$ denote the pullback of $\mathcal{X}_{g-l}\to\mathcal{A}_{g-l}$ to $\mathbf{Hom}_\mathbb{Z}(X_\xi,\mathcal{X}_{g-l})$, and let $T_\xi$ denote the split torus with character group $X_\xi$. Then, over $\mathbf{Hom}_\mathbb{Z}(X_\xi,\mathcal{X}_{g-l})$, we have a tautological extension \[0\to T_\xi \to \widetilde{G}\to A\to 0,\] which we can then pullback to $\overline{\mathcal{E}}_\xi$ and further to $\widehat{\overline{\mathcal{E}}}_\xi$. Finally, one applies Mumford's construction here to obtain a semi-abelian scheme $\prescript{\heartsuit}{}{G}\to \widehat{\overline{\mathcal{E}}}_\xi$.
    
    With these local charts in hand, Faltings--Chai continue by algebraizing them and then gluing in the \'etale topology, culminating in the statement \cite[Theorem~IV.5.7]{FC90} that to a smooth $\GL(X)$-admissible polyhedral cone decomposition $\mathfrak{C}$ of $C(X)$ we can attach a smooth proper algebraic stack $\overline{\mathcal{A}}_g$ over $\Spec\mathbb{Z}$, having $\mathcal{A}_g$ as an open dense algebraic substack, such that the boundary $\overline{\mathcal{A}}_g\setminus\mathcal{A}_g$ is a relative Cartier divisor. The universal abelian scheme $\mathcal{X}_g\to\mathcal{A}_g$ extends to a semi-abelian scheme $\mathcal{G}_g\to\overline{\mathcal{A}}_g$. Furthermore, there is a stratification \[\overline{\mathcal{A}}_g=\coprod_{\sigma\in\mathfrak{C}/\GL(X)}\mathcal{Z}(\sigma),\] where $\sigma$ runs over a complete set of representatives of the finitely many $\GL(X)$-orbits of $\mathfrak{C}$. For example, the open stratum corresponding to $\sigma=\{0\}$ is $Z(\{0\})=\mathcal{A}_g$. Locally in the étale topology, $\overline{\mathcal{A}}_g$ is isomorphic to $\overline{E}$, where $E\hookrightarrow\overline{E}$ denotes the torus embedding associated to $\mathfrak{C}$, and this isomorphism preserves the respective stratifications.

    The formal completion of $\overline{\mathcal{A}}_g$ along $\mathcal{Z}(\sigma)$ is isomorphic to the formal algebraic stack $\widehat{\overline{\mathcal{E}}}_\sigma/\Gamma_\sigma$, where $\widehat{\overline{\mathcal{E}}}_\sigma$ denotes the formal completion of $\overline{\mathcal{E}}_\xi$ along $\mathcal{Z}(\sigma)$, $\Gamma_\sigma$ denotes the finite subgroup of $\GL(X_\sigma)$ stabilizing $\sigma$, and the quotient is taken as that of $\widehat{\overline{\mathcal{E}}}_\sigma$ as stack under the étale equivalence relation determined by $\Gamma_\sigma$. Similarly, if \[D_\xi\coloneqq\coprod_{\sigma\in \mathfrak{C}^\circ_\xi/\GL(X)}\mathcal{Z}(\sigma),\]  then \cite[Proposition~IV.5.9]{FC90} states that the formal completion of $\overline{\mathcal{A}}_g$ along $D_\xi$ is isomorphic to the stack quotient of $\widehat{\overline{\mathcal{E}}}_\xi$ by $\GL(X_\xi)$.

We end this subsection by leveraging the construction of an arithmetic toroidal compactification $\overline{\mathcal{A}}_g$ of $\mathcal{A}_g$, in order to introduce Siegel modular forms and the so-called minimal compactification. Let $\mathcal{G}_g\to\overline{\mathcal{A}}_g$ be the semi-abelian scheme extending the universal abelian scheme $\mathcal{X}_g\to\mathcal{A}_g$, whose unit section $\overline{\varepsilon}\colon \overline{\mathcal{A}}_g\to\mathcal{G}_g$ extends the unit section $\varepsilon\colon \mathcal{A}_g\to\mathcal{X}_g$. We consider the Hodge vector bundle $\mathbb{E}\coloneqq \varepsilon^*(\Omega_{\mathcal{X}_g/\mathcal{A}_g})$ on $\mathcal{A}_g$, which extends to $\overline{\mathcal{A}}_g$ as $\overline{\mathbb{E}}\coloneqq\overline{\varepsilon}^*(\Omega_{\mathcal{G}_g/\overline{\mathcal{A}}_g})$. We further consider the Hodge line bundles $\omega\coloneqq\det\mathbb{E}$ on $\mathcal{A}_g$ and $\overline{\omega}\coloneqq\det\overline{\mathbb{E}}$ on $\overline{\mathcal{A}}_g$. Let $M$ be any abelian group. A \textit{Siegel modular form of genus} $g$ \textit{and weight} $k$ \textit{with coefficients in} $M$ is defined to be an element of $H^0(\overline{\mathcal{A}}_g,\overline{\omega}^k\otimes_\mathbb{Z}M)$. 

There is an arithmetic version of Koecher's principle \cite[Proposition V.1.5]{FC90}, namely, the restriction map $H^0(\overline{\mathcal{A}}_g,\overline{\omega}^k\otimes_\mathbb{Z}M)\to H^0(\mathcal{A}_g,\omega^k\otimes_\mathbb{Z}M)$ is an isomorphism when $g\geq 2$. Over the field of complex numbers, we recover the classical $\mathbb{C}$-vector space of Siegel modular forms of genus $g$ and weight $k$ as $M_k(\Gamma_g)=H^0(\overline{\mathcal{A}}_g,\overline{\omega}^k\otimes_\mathbb{Z}\mathbb{C})$. From this point forward, we will abuse notation by writing $\omega$ for both $\det\mathbb{E}$ and $\det\overline{\mathbb{E}}$, and we will restrict our attention to the coefficient group $M=\mathbb{Z}$.

In \cite[Proposition V.2.1]{FC90} it is shown that, for some $m>0$, the tensor power $\omega^m$ is generated by its global sections over $\overline{\mathcal{A}}_g$, so it defines a morphism $\overline{\mathcal{A}}_g\to\mathbb{P}_\mathbb{Z}^n$ for some $n>0$. Then (see \cite[Theorem V.2.3]{FC90}) there is a Stein factorization
\[\begin{tikzcd}
\overline{\mathcal{A}}_g \arrow{r}{\pi}\arrow{dr}[swap]{} 
  & \mathcal{A}_g^*\arrow{d}\\ 
  &  \mathbb{P}_\mathbb{Z}^n
\end{tikzcd}\] such that $\pi\colon \overline{\mathcal{A}}_g\to\mathcal{A}_g^*$ has connected geometric fibres, $\pi_*\mathcal{O}_{\overline{\mathcal{A}}_g}=\mathcal{O}_{\mathcal{A}_g^*}$, and the morphism $\mathcal{A}_g^*\to\mathbb{P}_\mathbb{Z}^n$ is finite. This normal projective scheme \[\mathcal{A}_g^*=\Proj\big(\bigoplus_{k\geq 0}H^0(\overline{\mathcal{A}}_g,\omega^k)\big)\] is called the \textit{minimal compactification of} $\mathcal{A}_g$ (classically, over $\mathbb{C}$, it is known as the Satake--Baily--Borel compactification). It has a stratification by locally closed normal subschemes \[\mathcal{A}_g^*=\coprod_{0\leq i\leq g}[\mathcal{A}_i],\] where $[\mathcal{A}_i]$ is canonically isomorphic to the coarse moduli space of principally polarized abelian schemes of dimension $i$ over $\Spec\mathbb{Z}$. For each $0\leq i\leq g$, the closure of $[\mathcal{A}_i]$ is equal to $\coprod_{0\leq j\leq i}[\mathcal{A}_j]\cong \mathcal{A}_i^*$. Furthermore, after replacing $m$ by a suitable multiple, there is a very ample invertible sheaf $\LL$ on $\mathcal{A}_g^*$ such that $\pi^*\LL=\omega^m$.

\subsection{Fourier--Jacobi expansions}\label{ssfjexps}

We now explain how Mumford's construction allows us to define arithmetic analogues of the Fourier and Fourier--Jacobi expansions for an arithmetic Siegel modular form $f\in H^0(\overline{\mathcal{A}}_g,\omega^k)$. Recall that $\overline{\mathcal{A}}_g$ is associated to a smooth $\GL(X)$-admissible polyhedral cone decomposition $\mathfrak{C}$ of $C(X)$.

Let us begin with the Fourier expansion. Given $f\in H^0(\overline{\mathcal{A}}_g,\omega^k)$, its image in $H^0(\mathcal{A}_g,\omega^k)$ is a functor which, to any principally polarized abelian scheme $A\to\Spec R$ of relative dimension $g$ over a base ring $R$, associates an element $f(A)\in H^0(A,\omega^k_{A/R})$, where $\omega_{A/R}$ denotes the sheaf of translation invariant differential forms of highest degree on $A$. We will next evaluate $f$ at a specific abelian scheme obtained from Mumford's construction.

Let $\sigma\subset C(X)^\circ$ be a top dimensional cone. Since it is smooth, it is generated by a $\mathbb{Z}$-basis $s_1,\dots,s_r$ of the space $B(X)$ of symmetric billinear forms, where $r=g(g+1)/2$. Let $t_1,\dots,t_r$ be its dual basis in the space $S^2(X)$ of quadratic forms. We then consider the ring of formal power series $R'\coloneqq\mathbb{Z}[[t_1,\dots,t_r]]$. Let $K$ be the fraction field of $R'$ and let $\widetilde{G}$ be the $g$-dimensional split torus over $R'$ with character group $X$. The above choice of basis determines a positive definite bilinear form $b\colon X\otimes X\to K^{\times}$ via $b(\mu\otimes\nu)=\prod_i t_i^{s_i(\mu,\nu)}$, which in turn determines a period subgroup $Y\subset\widetilde{G}\otimes K$. We may thus apply Mumford's construction to produce a semi-abelian scheme $G$ over $R'$ with $\omega_{G/R'}\cong\omega_{\widetilde{G}/R'}\cong R'\otimes_{\mathbb{Z}}\bigwedge^g(X)$, and such that over the localization $R\coloneqq R'[1/t_1\cdots t_r]$ it is a principally polarized abelian scheme. Evaluating the functor $f\in H^0(\mathcal{A}_g,\omega^k)$ at this abelian scheme $G\to R$ thus yields an element $f(G)\in R\otimes_{\mathbb{Z}}\bigwedge^g(X)^{\otimes k}$, which we write as a formal power series \[f(G)=\sum_{\lambda\in S^2(X)}f(\lambda)\cdot\lambda\]  with coefficients $f(\lambda)\in \bigwedge^g(X)^{\otimes k}$. We call it the $q$-\textit{expansion of} $f$. It is shown in \cite[p. 139]{FC90} that $f(G)$ does not depend on the the choice of cone $\sigma\subset C(X)^\circ$.

Now, to define the more general Fourier--Jacobi expansion, recall that Mumford's construction can be applied to semi-abelian schemes $\widetilde{G}$ which are not necessarily tori, i.e., which have non-trivial abelian part. So let us now start with an arbitrary cone $\sigma\in\mathfrak{C}$ and let $l=\rk X_\sigma=\rk X_\xi$. 

As before, we let $\widehat{\overline{\mathcal{E}}}_\sigma$  and $\widehat{\overline{\mathcal{E}}}_\xi$ denote the formal completions of $\overline{\mathcal{E}}_\xi$ along $\mathcal{Z}(\sigma)$ and $\mathcal{Z}_\xi$, respectively. We consider the pullback of the tautological extension $0\to T_{\xi}\to\widetilde{G}\to A\to 0$ to $\widehat{\overline{\mathcal{E}}}_\sigma$ and apply Mumford's construction to produce a semi-abelian scheme $\prescript{\heartsuit}{}{G}\to \widehat{\overline{\mathcal{E}}}_\sigma$. The Fourier--Jacobi expansion of cogenus $l$ of a Siegel modular form $f\in H^0(\overline{\mathcal{A}}_g,\omega^k)$ is then defined by evaluating at $\prescript{\heartsuit}{}{G}\to \widehat{\overline{\mathcal{E}}}_\sigma$. More explicitly, let $\omega(\mathcal{X}_{g-l},\mathcal{A}_{g-l})$ denote the Hodge line bundle on $\mathcal{A}_{g-l}$, and let $\omega_\sigma$ and $\omega_\xi$ denote its pullbacks to $\widehat{\overline{\mathcal{E}}}_\sigma$ and to $\widehat{\overline{\mathcal{E}}}_\xi$, respectively. We then have an injective map

\[\FJ_{\sigma}\colon H^0(\overline{\mathcal{A}}_g,\omega^k)\to H^0(\widehat{\overline{\mathcal{E}}}_\sigma, \omega_\sigma^k)\]
given by
\[f\mapsto \sum_{\lambda\in S^2(X_\xi)}f(\lambda)\cdot\lambda,\]
 namely, assigning to each arithmetic Siegel modular form its Fourier--Jacobi expansion of cogenus $l$; we call it the \textit{Fourier--Jacobi homomorphism of cogenus l}. It is shown in \cite[p. 143]{FC90} that this construction does not depend on the choice of cone $\sigma\in\mathfrak{C}$ having $\rk X_\sigma=\rk X_\xi=l$, so we may regard $\FJ_\sigma$ as taking values in $H^0(\widehat{\overline{\mathcal{E}}}_\xi, \omega_\xi^k)$, and we thence denote it by $\FJ_\xi$. Furthermore, the coefficients $f(\lambda)$ are invariant under the natural $\GL(X_\xi)$-action.

\subsection{Principal level structures}\label{pls}

Let $A\to S$ be an abelian scheme of relative dimension $g$, let $n\geq 1$ be a positive integer, let $\mu_n$ denote the group of $n$-th roots of unity and let $\zeta_n\in\mu_n$ be a primitive root of unity. We denote the subgroup of $n$-torsion points of $A$ by $A[n]$, which we endow with a symplectic structure via the Weil pairing $A[n]\times A[n]\to \mu_n\cong\mathbb{Z}/n\mathbb{Z}$. We further endow the group $(\mathbb{Z}/n\mathbb{Z})^{2g}$ with the standard symplectic form 

\[ (\mathbb{Z}/n\mathbb{Z})^{2g}\times (\mathbb{Z}/n\mathbb{Z})^{2g}\to \mathbb{Z}/n\mathbb{Z}\]
given by the assignment 
\[(u,v)\times (z,w)\mapsto u{w}^t-v{z}^t.\]
A \textit{principal level n structure on} $A$ is an isomorphism $(\mathbb{Z}/n\mathbb{Z})^{2g}\cong A[n]$ which preserves the symplectic structures.

Let $\mathcal{A}_{g,n}$ denote the moduli stack of principally polarized abelian schemes $A\to S$ of relative dimension $g$ with a principal level $n$ structure. It is a smooth algebraic stack over $\Spec\mathbb{Z}[\zeta_n,1/n]$. We have $\mathcal{A}_{g,1}\cong\mathcal{A}_{g}$ and, whenever $m$ divides $n$, there is a natural morphism $\mathcal{A}_{g,n}\to\mathcal{A}_{g,m}$ which is finite and \'etale over $\Spec\mathbb{Z}[\zeta_n,1/n]$. There is a natural action of $\Sp_{2g}(\mathbb{Z}/n\mathbb{Z})$ on $\mathcal{A}_{g,n}$, defined by composing a principal level $n$ structure with elements of $\Sp_{2g}(\mathbb{Z}/n\mathbb{Z})$, and in fact we have \[\mathcal{A}_{g,n}/\Sp_{2g}(\mathbb{Z}/n\mathbb{Z})\cong\mathcal{A}_g\times_{\Spec\mathbb{Z}}\Spec\mathbb{Z}[\zeta_n,1/n].\]

The construction described in Subsection~\ref{torcomp} of toroidal compactifications of $\mathcal{A}_{g}$ carries over to $\mathcal{A}_{g,n}$. Indeed, let $\mathfrak{C}$ be a $\GL(X)$-admissible polyhedral cone decomposition  of $C(X)$, where $X$ is a free abelian group of rank $g$ and the integral structure of $C(X)\subset B(X_\mathbb{R})$ is now given by $n\cdot B(X)$. Then, \cite[Theorem~IV.6.7]{FC90} shows that we can attach to $\mathfrak{C}$ an algebraic stack $\overline{\mathcal{A}}_{g,n}$, proper over $\Spec\mathbb{Z}[\zeta_n,1/n]$, containing $\mathcal{A}_{g,n}$ as an open dense algebraic substack, and such that the boundary $\overline{\mathcal{A}}_{g,n}\setminus\mathcal{A}_{g,n}$ is a relative Cartier divisor. If $m$ divides $n$, the natural morphism $\mathcal{A}_{g,n}\to\mathcal{A}_{g,m}$ extends to a morphism $\overline{\mathcal{A}}_{g,n}\to\overline{\mathcal{A}}_{g,m}$ over $\Spec\mathbb{Z}[\zeta_n,1/n]$ which is equivariant with respect to the surjection $\Sp_{2g}(\mathbb{Z}/m\mathbb{Z})\to\Sp_{2g}(\mathbb{Z}/n\mathbb{Z})$. Furthermore, $\overline{\mathcal{A}}_{g,n}$ is the normalization of the natural map $\mathcal{A}_{g,n}\to\overline{\mathcal{A}}_g$ and, locally in the étale topology, it is isomorphic to the torus embedding defined by $\mathfrak{C}$. 

In order to describe the stratification of $\overline{\mathcal{A}}_{g,n}$, which is dictated by the cone decomposition $\mathfrak{C}$, we first need to introduce a few definitions. Recall that, for each $\sigma\in\mathfrak{C}$, we denote by $X_{\sigma}$ the smallest quotient of $X$ such that every quadratic form in $\sigma^\vee$ factors through $X_\mathbb{R}\to X_{\sigma,\mathbb{R}}$. Let \[\Sigma^*\coloneqq\coprod_{I}C(X_{\sigma}),\] where the index set $I$ is given by \[I\coloneqq\{\textnormal{surjections }\varphi\colon \mathbb{Z}^{2g}\to X_{\sigma}\mid(\ker\varphi)^\perp\subseteq\ker\varphi\},\] and where $\mathbb{Z}^{2g}$ is endowed with the standard symplectic form. The set $\Sigma^*$ inherits from the cone decomposition $\mathfrak{C}$ the structure of a simplicial complex.

We define an equivalence relation on $\Sigma^*$ as follows. Given cones $\sigma,\sigma'\in\mathfrak{C}$, a surjection $X_\sigma\to X_{\sigma'}$ and a cone $C(X_\sigma)$ indexed by a surjection $\mathbb{Z}^{2g}\to X_{\sigma}$, let $C(X_{\sigma'})$ be the cone indexed by the composite surjection $\mathbb{Z}^{2g}\to X_{\sigma}\to X_{\sigma'}$. Then, we identify $C(X_{\sigma'})$ with its image under the canonical injection $C(X_{\sigma'})\hookrightarrow C(X_{\sigma})$. Furthermore, we also identify the elements of $\Sigma^*$ lying in the same orbit of the natural action of $\Sp_{2g}(\mathbb{Z})$ on $\Sigma^*$. We then define the set $\Sigma_n$ as the quotient of $\Sigma^*$ under this equivalence relation. In general, $\Sigma_n$ is not necessarily a simplicial complex. However, if $n\geq 3$, a lemma of Serre (which dates back to Minkowski according to \cite{SZ96}) states that any algebraic integer in $\mathbb{Z}[\zeta_n]$ which is a unit and congruent to $1$ modulo $n$, is in fact equal to $1$. This implies that $\Sigma_n$ is a simplicial complex when $n\geq 3$, according to \cite[p. 129]{FC90}.

A stratification of $\overline{\mathcal{A}}_{g,n}$ is then shown in \cite[Theorem~IV.6.7]{FC90} to be given as \[\overline{\mathcal{A}}_{g,n}=\coprod_{\sigma\in\Sigma_n}\mathcal{Z}_n(\sigma),\] satisfying the property that, for every $\sigma\in\Sigma_n$, the formal completion of $\overline{\mathcal{A}}_{g,n}$ along the stratum $\mathcal{Z}_n(\sigma)$ is isomorphic to the formal algebraic stack $\widehat{\overline{\mathcal{E}}}_{\sigma,n}/\Gamma_{\sigma,n}$, where $\Gamma_{\sigma,n}$ is the stabilizer of $\sigma$ in $\ker(\GL(X_\sigma)\to\GL(X_\sigma/nX_\sigma))$ and $\widehat{\overline{\mathcal{E}}}_{\sigma,n}$ is defined in a manner analogous to that of $\widehat{\overline{\mathcal{E}}}_{\sigma}$ as explained in Subsection~\ref{torcomp}. In fact, if $n\geq 3$, then $\Gamma_{\sigma,n}$ is trivial by the above mentioned lemma of Serre--Minkowski, so, according to \cite[Corollary IV.6.9]{FC90}, in this case $\overline{\mathcal{A}}_{g,n}$ is an algebraic space.

The universal principally polarized abelian scheme $\mathcal{X}_{g,n}\to\mathcal{A}_{g,n}$ extends to a semi-abelian scheme $\mathcal{G}_{g,n}\to\overline{\mathcal{A}}_{g,n}$ which is canonically isomorphic to the pullback of the semi-abelian scheme $\mathcal{G}_g\to\overline{\mathcal{A}}_g$ by the morphism $\overline{\mathcal{A}}_{g,n}\to\overline{\mathcal{A}}_g$. We may thus consider the Hodge vector bundle $\mathbb{E}= \overline{\varepsilon}^*(\Omega_{\mathcal{G}_{g,n},\overline{\mathcal{A}}_{g,n}})$, where $\overline{\varepsilon}\colon \overline{\mathcal{A}}_{g,n}\to\mathcal{G}_{g,n}$ denotes the unit section, and the Hodge line bundle $\omega=\det\mathbb{E}$ on $\overline{\mathcal{A}}_{g,n}$. We define a \textit{Siegel modular form of genus g, weight k and level n} to be an element of $H^0(\overline{\mathcal{A}}_{g,n},\omega^k)$. 

Let us denote by $\Sp_{2g}(\mathbb{Z})(n)$ the kernel of the natural surjection $\Sp_{2g}(\mathbb{Z})\to\Sp_{2g}(\mathbb{Z}/n\mathbb{Z})$. The definition of the Fourier--Jacobi expansion of cogenus $l$ for elements in $H^0(\overline{\mathcal{A}}_g,\omega^k)$ naturally extends to elements in $H^0(\overline{\mathcal{A}}_{g,n},\omega^k)$, with the caveat that it now depends on the choice of a cusp of $\overline{\mathcal{A}}_{g,n}$ (see \cite[Proposition V.1.9]{FC90}). Here, a \textit{cusp of} $\overline{\mathcal{A}}_{g,n}$ is defined as a $\Sp_{2g}(\mathbb{Z})(n)$-orbit in the set of exact sequences \[0\to\Sigma_\mathbb{Z}\to\mathbb{Z}^{2g}\to\mathbb{Z}^{2g}/\Sigma_\mathbb{Z}\to 0,\] where $\mathbb{Z}^{2g}$ is endowed with the standard symplectic form and $\Sigma_\mathbb{Z}\subseteq\mathbb{Z}^{2g}$ is a co-torsion free subgroup with $\Sigma_\mathbb{Z}^\perp\subseteq\Sigma_\mathbb{Z}$. In particular, we may notice that there is a natural map from $\Sigma_n$ to the set of cusps of $\overline{\mathcal{A}}_{g,n}$.

From now on, let us assume that $n\geq 3$. In \cite[Theorem~2.5]{FC90}, it is shown that we also have a minimal compactification \[\mathcal{A}^*_{g,n}=\Proj\big(\bigoplus_{k\geq 0}H^0(\overline{\mathcal{A}}_{g,n},\omega^k)\big),\] which is a normal proper scheme of finite type over $\Spec\mathbb{Z}[\zeta_n,1/n]$, containing $\mathcal{A}_{g,n}$ as an open dense subscheme. The Hodge line bundle $\omega$ on $\mathcal{A}_{g,n}$ extends to an invertible sheaf $\LL$ on $\mathcal{A}^*_{g,n}$ relatively ample over $\Spec\mathbb{Z}$, and there is a canonical morphism $\pi\colon \overline{\mathcal{A}}_{g,n}\to\mathcal{A}^*_{g,n}$ such that $\pi^*\LL=\omega$. Furthermore, $\mathcal{A}^*_{g,n}$ has a stratification \[\mathcal{A}_{g,n}^*=\coprod_{0\leq i\leq g}\mathcal{A}_{i,n}\] by locally closed subschemes $\mathcal{A}_{i,n}$, geometrically normal and flat over $\Spec\mathbb{Z}$. For each $0\leq i\leq g$, the subscheme $\mathcal{A}_{i,n}$ is the fine moduli scheme of principally polarized abelian schemes of dimension $i$ over $\Spec\mathbb{Z}$ with a principal level $n$ structure, whose closure in $\mathcal{A}_{g,n}^*$ is isomorphic to $\mathcal{A}_{i,n}^*$.

A very nice feature of the case of principal level structures $n\geq 3$ is that, if we choose a certain type of fan $\mathfrak{C}$ to construct $\overline{\mathcal{A}}_{g,n}$, then $\overline{\mathcal{A}}_{g,n}$ can be exhibited as the normalization of a blow-up of $\mathcal{A}_{g,n}^*$ along a coherent sheaf of ideals. In particular, $\overline{\mathcal{A}}_{g,n}$ is a projective scheme in this case. Let us explain this in more detail.

Let $\mathfrak{C}$ be a smooth $\GL(X)$-admissible polyhedral cone decomposition of $C(X)$. An $\textit{invariant polarization function for}$ $\mathfrak{C}$ is a $\GL(X)$-invariant piecewise linear continuous function $\phi\colon C(X)\to\mathbb{R}_{\geq 0}$, such that \begin{enumerate}
    \item [(i)] $\phi(x)>0$ for $x\neq 0, \phi(tx)=t\phi(x)$ for all $x\in C(X), t\in\mathbb{R}_{\geq 0}$,
    \item [(ii)] $\phi$ is \textit{convex}, which in this context means that $\phi(x+y)\geq\phi(x)+\phi(y)$ for all $x,y\in C(X)$,
    \item [(iii)] $\phi|_\sigma$ is linear for every $\sigma\in\mathfrak{C}$, and
    \item [(iv)] $\phi$ takes integral values on $C(X)\cap B(X)$.
\end{enumerate} We say that $\mathfrak{C}$ is \textit{projective} if it admits an invariant polarization function. 

Let $\overline{\mathcal{A}}_{g,n}$ be the toroidal compactification associated to a smooth projective $\GL(X)$-admissible polyhedral cone decomposition of $C(X)$, with polarization function $\phi$, and let $K(n)$ denote the kernel of the map $\GL(X)\to\GL(X/nX)$. Let $D=\overline{\mathcal{A}}_{g,n}\setminus\mathcal{A}_{g,n}$ be the boundary divisor, with $D=\bigcup D_{\alpha}$ its decomposition into irreducible components, so that $D_{\alpha}$ corresponds to the $K(n)$-orbit of a $1$-dimensional cone $\sigma_{\alpha}\in\mathfrak{C}$. Let $c_\alpha\in\mathbb{Z}$ be the value of $\phi$ at the $\mathbb{Z}$-generator of $\sigma_\alpha\cap n\cdot B(X)$. Finally, consider the invertible sheaf of ideals $\J_{\phi}\coloneqq\mathcal{O}_{\overline{\mathcal{A}}_{g,n}}(-\sum_{\alpha}c_\alpha D_\alpha)$. We can now state a result from \cite{FC90}, which will be crucial for us in Subsection~\ref{secvalk}.

\begin{thm}\textnormal{\cite[Theorem V.5.8]{FC90}}\label{levblow}
    Suppose that $g\geq 2$ and $n\geq 3$. Then, there exists an integer $d\geq 1$ such that:
    \begin{enumerate}
        \item [(i)] $\overline{\mathcal{A}}_{g,n}$ is naturally isomorphic to the normalization of the blow-up of $\mathcal{A}_{g,n}^*$ along the coherent sheaf of ideals $\pi_*(\J^d_\phi)$.
        \item [(ii)] $\J^d_\phi=\pi^*\pi_*(\J^d_\phi)$.
    \end{enumerate}
\end{thm}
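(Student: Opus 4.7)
The plan is to derive both statements from the $\pi$-relative ampleness of $\J_\phi$. The guiding principle is the classical toric dictionary: a strictly convex, integral, piecewise-linear function on a fan determines a relatively ample equivariant Cartier divisor on the associated torus embedding. Since the morphism $\pi\colon\overline{\mathcal{A}}_{g,n}\to\mathcal{A}_{g,n}^*$ is, formally-locally along each boundary stratum, modelled on such a torus embedding fibered over an abelian scheme (with trivial stabiliser $\Gamma_{\sigma,n}$ thanks to the Serre--Minkowski lemma and the hypothesis $n\geq 3$), the four defining properties of an invariant polarization function should translate exactly into the $\pi$-ampleness of some fixed power $\J_\phi^{d_0}$.

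The first step is therefore to establish that, for some integer $d_0\geq 1$, the invertible sheaf $\J_\phi^{d_0}$ is $\pi$-very ample. I would cover $\mathcal{A}_{g,n}^*$ by affine opens compatible with its stratification and verify the criterion \'etale-locally. By the structure theorem recalled in Subsection~\ref{torcomp}, this local verification reduces via Mumford's construction to the setting of torus embeddings over charts of moduli of lower-dimensional abelian schemes, where the toric ampleness criterion of \cite{KKMS73} produces the required $d_0$. Relative Serre vanishing then also yields $R^i\pi_*\J_\phi^{d_0m}=0$ for all $i>0$ and all sufficiently large $m$.

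Once this is in hand, both conclusions follow for $d$ any sufficiently divisible multiple of $d_0$. Statement (ii), interpreted as the equality of ideal sheaves inside $\mathcal{O}_{\overline{\mathcal{A}}_{g,n}}$, is equivalent to the $\pi$-global generation of $\J_\phi^d$, which is immediate from Step~1. For (i), I would consider the graded $\mathcal{O}_{\mathcal{A}_{g,n}^*}$-algebra $\mathcal{R}=\bigoplus_{k\geq 0}\pi_*(\J_\phi^{dk})$, which after further enlarging $d$ is generated in degree one, so that $\mathrm{Proj}\,\mathcal{R}=\mathrm{Bl}_{\pi_*\J_\phi^d}\mathcal{A}_{g,n}^*$. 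By the universal property of blow-ups, (ii) produces a canonical morphism $\overline{\mathcal{A}}_{g,n}\to\mathrm{Bl}_{\pi_*\J_\phi^d}\mathcal{A}_{g,n}^*$ over $\mathcal{A}_{g,n}^*$. This morphism is proper, surjective, and restricts to an isomorphism over the open locus $\mathcal{A}_{g,n}$; since $\overline{\mathcal{A}}_{g,n}$ is normal, it factors through and identifies with the normalization of the blow-up.

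The main obstacle I anticipate is Step~1: while the toric dictionary for ampleness is textbook, here it must be carried out uniformly across all of Faltings--Chai's \'etale local charts, including the non-toric abelian-scheme directions arising in Mumford's construction, and one must check compatibility with the gluing and with the $\GL(X_\xi)$-actions. The projectivity of $\mathfrak{C}$, i.e. the existence of $\phi$, is precisely the combinatorial hypothesis that allows this uniform verification, and the hypothesis $n\geq 3$ is what ensures, via Serre--Minkowski, that the formal charts $\widehat{\overline{\mathcal{E}}}_{\sigma,n}/\Gamma_{\sigma,n}$ really are torus embeddings, so that the toric criterion applies without modification.
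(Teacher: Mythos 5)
The paper does not give a proof of this statement: it is quoted verbatim as Theorem~V.5.8 of \cite{FC90}, so there is no ``paper's own proof'' to compare against. I therefore evaluate your argument on its own merits.

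Your overall strategy --- translate the existence of the invariant polarization function $\phi$ into $\pi$-relative ampleness of a power of $\J_\phi$ by passing through the formal local charts $\widehat{\overline{\mathcal{E}}}_{\sigma,n}$ and applying the toric ampleness criterion of \cite{KKMS73}, then harvest both (i) and (ii) as consequences --- is indeed the substance of the Faltings--Chai argument, and your identification of Step~1 as the real work and of the role played by $n\geq 3$ via Serre--Minkowski is accurate. Your reduction of (ii) to $\pi$-global generation, i.e.\ surjectivity of the adjunction map $\pi^*\pi_*(\J_\phi^d)\to\J_\phi^d$, is also correct.

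The deduction of (i), however, has a gap. You claim that after enlarging $d$ the graded algebra $\mathcal{R}=\bigoplus_{k\geq 0}\pi_*(\J_\phi^{dk})$ is generated in degree one, so that $\Proj\mathcal{R}$ \emph{is} the blow-up $\mathrm{Bl}_{\pi_*\J_\phi^d}\mathcal{A}_{g,n}^*$. This is neither justified nor, in all likelihood, true: if it held, $\overline{\mathcal{A}}_{g,n}$ would be the blow-up itself rather than its normalization, which is a strictly stronger statement than the theorem asserts and would in particular force the blow-up to already be normal. In general $\mathcal{R}$ contains the Rees algebra $\bigoplus_k(\pi_*\J_\phi^d)^k$ properly, and the correct identification is that $\mathcal{R}$ (in large degrees) is the \emph{integral closure} of the Rees algebra, so that $\Proj\mathcal{R}$ is the normalized blow-up. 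Separately, the concluding sentence --- that the canonical morphism $\overline{\mathcal{A}}_{g,n}\to\mathrm{Bl}$, being proper, surjective, birational, with normal source, ``factors through and identifies with the normalization'' --- is not a valid inference: a proper birational morphism from a normal scheme to the normalization need not be an isomorphism, as it could contract positive-dimensional fibres. What you actually need, and what relative ampleness gives you, is one of the following: either invoke the standard fact that for a proper morphism $\pi$ with $\pi$-ample invertible $\J_\phi^d$ one has $\overline{\mathcal{A}}_{g,n}\cong\Proj_{\mathcal{A}_{g,n}^*}\big(\bigoplus_k\pi_*\J_\phi^{dk}\big)$ directly, and then identify this $\Proj$ with the normalized blow-up by the integral-closure observation above; or, alternatively, observe that the morphism $\rho\colon\overline{\mathcal{A}}_{g,n}\to\mathrm{Bl}$ pulls back the $\varphi$-ample sheaf $\mathcal{O}_{\mathrm{Bl}}(1)$ to the $\pi$-ample sheaf $\J_\phi^d$, which forces $\rho$ to be \emph{finite}, and a finite birational morphism onto the normal scheme $\mathrm{Nor}(\mathrm{Bl})$ is an isomorphism. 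Either route closes the gap, but as written your argument stops short.
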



\subsection{Jacobi forms and formal Fourier--Jacobi series}\label{subsecjac}
Let $\varphi\colon \mathcal{X}_g\to \mathcal{A}_g$ denote the principally polarized universal abelian scheme, let $\mathcal{P}$ denote the Poincaré bundle on $\mathcal{X}_g\times_{\mathcal{A}_g} \mathcal{X}_g$, and let $\Delta\colon \mathcal{X}_g\to \mathcal{X}_g\times_{\mathcal{A}_g} \mathcal{X}_g$ denote the diagonal morphism. We then have over $\mathcal{X}_g$ a distinguished line bundle $\mathcal{L}\coloneqq\Delta^*\mathcal{P}$. Let $M$ be any abelian group. For each $k,m\in\mathbb{N}$ we consider the invertible sheaf $\mathcal{J}_{k,m}\coloneqq \mathcal{L}^m\otimes\varphi^*\omega^k$ on $\mathcal{X}_g$, and we define a \textit{Jacobi form of weight} $k$ \textit{and index} $m$ \textit{with coefficients in} $M$ to be an element of $H^0(\mathcal{X}_g, \mathcal{J}_{k,m}\otimes_{\mathbb{Z}}M)$. 

Over the field of complex numbers, we recover the classical $\mathbb{C}$-vector space of Jacobi forms as $J_{k,m}(\Gamma_g)=H^0(\mathcal{X}_g, \mathcal{J}_{k,m}\otimes_{\mathbb{Z}}\mathbb{C})$. From this point forward, we will restrict our attention to the coefficient group $M=\mathbb{Z}$. We further note here that in \cite[Theorem~2.12]{Kra95} it is shown that, for $k,m>0$, the invertible sheaf $\mathcal{J}_{k,m}$ on $\mathcal{X}_g$ is ample. 

Now, using Mumford's construction once again, we can define a $q$-expansion for arithmetic Jacobi forms, analogous to the Fourier expansion of classical Jacobi forms over $\mathbb{C}$. If $A\to\Spec R$ is a principally polarized abelian scheme of relative dimension $g$ over a base ring $R$, let $\mathcal{P}_A$ denote the Poincaré bundle on $A\times_{R}A$, let $\Delta_A\colon A\to A\times_{R} A$ denote the diagonal morphism and let $\mathcal{L}_A\coloneqq\Delta_A^*\mathcal{P}_A$. Each element $f\in H^0(\mathcal{X}_g, \mathcal{J}_{k,m})$ can be interpreted as a functor which, to the pair $(A,\mathcal{L}_A)$, associates an element \[f(A,\mathcal{L}_A)\in H^0(A,\omega_{A/R}^k\otimes\mathcal{L}_A^m),\] where $\omega_{A/R}$ denotes the sheaf of translation invariant differential forms of highest degree on $A$. We thus seek to evaluate $f$ at a specific pair $(A,\mathcal{L}_A)$ obtained from Mumford's construction.

Using the notation from our description in Subsection~\ref{ssfjexps} of the  $q$-expansion of an arithmetic Siegel modular form, recall the $g$-dimensional split torus $\widetilde{G}$ over $R'=\mathbb{Z}[[t_1,\dots,t_r]]$ with character group $X\cong\mathbb{Z}^g$, where $r=g(g+1)/2$. Let $\mathcal{L}_{\widetilde{G}}$ be the trivial cubical invertible sheaf on $\widetilde{G}$. Then, applying Mumford's construction to $(\widetilde{G}, \mathcal{L}_{\widetilde{G}})$ yields a pair $(G',\mathcal{L}_{G'})$, where $G'\to\Spec R'$ is a semi-abelian scheme with $\omega_{G/R'}\cong\omega_{\widetilde{G}/R'}\cong R'\otimes_{\mathbb{Z}}\bigwedge^g(X)$, such that the base change $G\coloneqq G'\times_{R'}\Spec R$ to the localization $R=R'[1/t_1\cdots t_r]$ is a principally polarized abelian scheme and $\mathcal{L}_{G}\coloneqq\mathcal{L}_{G'}\otimes R$ is the ample invertible sheaf on $G$ obtained by pulling back the Poincaré bundle on $G\times_R G$ via the diagonal morphism. We can expand $f(G,\mathcal{L}_G)$ according to the characters of $\widetilde{G}$ as \[f(G,\mathcal{L}_G)=\sum_{x\in X}f(G,x)\cdot x,\] with $f(G,x)\in R\otimes_{\mathbb{Z}}\bigwedge^g(X)^{\otimes k}$. By writing each of these coefficients as a power series \[f(G,x)=\sum_{\lambda\in S^2(X)}f(\lambda,x)\cdot\lambda,\] we finally obtain the so-called $q$-\textit{expansion} \[f(G,\mathcal{L}_G)=\sum_{(\lambda,x)\in S^2(X)\times X}f(\lambda,x)\cdot(\lambda,x)\] of the Jacobi form $f\in H^0(\mathcal{X}_g, \mathcal{J}_{k,m})$. It is shown in \cite[Remark 2.3]{Kra95} that the $q$-expansion of $f$ does not depend on the choice of basis $t_1,..,t_r$ of $S^2(X)$. We further note that the coefficients $f(\lambda,x)$ are invariant under the natural $\GL(X)$-action.

Let $\overline{\mathcal{A}}_g$ be the toroidal compactification associated to a $\GL(X)$-admissible polyhedral cone decomposition $\mathfrak{C}$ of $C(X)$. Let $\sigma\in\mathfrak{C}$ be a cone with $\rk X_\sigma=\rk X_{\xi}=1$, and let 
\[
    \FJ\colon H^0(\overline{\mathcal{A}}_g,\omega^k)\to H^0(\widehat{\overline{\mathcal{E}}}_\xi, \omega_\xi^k)
\] be the Fourier--Jacobi homomorphism of cogenus $1$, assigning to each Siegel modular form $f$ its Fourier--Jacobi expansion $\sum_{m\in\mathbb{Z}}f_m\chi^m$ of cogenus $1$. Recall that, here, $\omega_{\xi}$ denotes the pullback to $\widehat{\overline{\mathcal{E}}}_\xi$ of the Hodge line bundle on $\mathcal{A}_{g-1}$. 

It is shown in \cite[Lemma~3.4]{Kra22} that each $f_m$ is a Jacobi form, namely $f_m\in H^0(\mathcal{X}_{g-1}, \J_{k,m})$, so we have an injective map

\[H^0(\widehat{\overline{\mathcal{E}}}_\xi,\omega_\xi^k)\to\prod_{m\in\mathbb{N}}H^0(\mathcal{X}_{g-1}, \J_{k,m})\]
given by the assignment
\[\sum_{m\in\mathbb{N}}f_m\chi^m\mapsto (f_m)_{m\in\mathbb{N}}.\]
Each Jacobi form $f_m$ has a $q$-expansion \[\sum_{(\lambda,x)\in S^2(X')\times X'} f_m(\lambda,x),\] where $X'=\ker(X\to X_{\sigma})$. We define \[c(N)\coloneqq f_m(\lambda,x), \ \ \ N=\begin{pmatrix}
    \lambda &\frac{1}{2}x\\
    \frac{1}{2}x^t &m
\end{pmatrix}\in S^2(X),\] and we say that a tuple $(f_m)\in\prod_{m\in\mathbb{N}}H^0(\mathcal{X}_{g-1}, \J_{k,m})$ is a $\GL(X)$-\textit{invariant formal Fourier--Jacobi series} if the equality \[c(u^tNu)=\det(u)^kc(N)\] holds for every $N\in S^2(X)$ and $u\in\GL(X)$. We denote the subgroup of such $\GL(X)$-invariant tuples by $\big(\prod_{m\in\mathbb{N}}H^0(\mathcal{X}_{g-1}, \J_{k,m})\big)^{\GL(X)}$. 

According to \cite[Lemma~3.6]{Kra22} we have an isomorphism \[H^0(\widehat{\overline{\mathcal{E}}}_\xi,\omega_\xi^k)\cong\Big(\prod_{m\in\mathbb{N}}H^0(\mathcal{X}_{g-1}, \J_{k,m})\Big)^{\GL(X)},\] so we may write the map $\FJ$ as 
\[
\FJ\colon H^0(\overline{\mathcal{A}}_g,\omega^k)\to\Big(\prod_{m\in\mathbb{N}}H^0(\mathcal{X}_{g-1}, \J_{k,m})\Big)^{\GL(X)},\] 
assigning to each Siegel modular form $f$ the $\GL(X)$-invariant formal Fourier--Jacobi series $(f_m)_{m\in\mathbb{N}}$ given by the coefficients of its Fourier--Jacobi expansion. The main result in \cite{Kra22} then states  that $\FJ$ is surjective, thus showing that the modularity result of Bruinier--Raum carries over to the arithmetic setting over $\mathbb{Z}$, at least in the case of cogenus equal to $1$.

\subsection{The cohomological reformulation}\label{seccohref}
Let $\overline{\mathcal{A}}_{g}$ be a toroidal compactification of the moduli stack $\mathcal{A}_g$, and let $D$ be the relative Cartier divisor with reduced structure supported on the boundary $\overline{\mathcal{A}}_{g}\setminus\mathcal{A}_g$. Let also \begin{align*}
    \FJ\colon H^0(\overline{\mathcal{A}}_g,\omega^k)\to H^0(\widehat{\overline{\mathcal{E}}}_\xi, \omega_\xi^k)
\end{align*} be the Fourier--Jacobi homomorphism of cogenus $1$. Recall from the end of Subsection~\ref{torcomp} that the formal completion $\widehat{\overline{\mathcal{A}}}_{g}$ of $\overline{\mathcal{A}}_{g}$ along $D$ is isomorphic to the stack quotient of $\widehat{\overline{\mathcal{E}}}_\xi$ by $\GL(X_\xi)$, where $\rk X_\xi=1$. But in this case we have $\GL(X_\xi)=\GL(\mathbb{Z})=\{\pm 1\}$, so the action of $\GL(X_\xi)$ is trivial and $\widehat{\overline{\mathcal{A}}}_{g}$ is in fact isomorphic to $\widehat{\overline{\mathcal{E}}}_\xi$. In particular, we have \[H^0(\widehat{\overline{\mathcal{A}}}_{g},\widehat{\omega}^k)\cong H^0(\widehat{\overline{\mathcal{E}}}_\xi,\omega_\xi^k),\] where $\widehat{\omega}$ denotes the formal completion of $\omega$ along $D$. 

Now let $\mathcal{J}$ be a coherent sheaf of ideals supported on $\overline{\mathcal{A}}_{g}\setminus\mathcal{A}_g$; we call any such $\J$ a \textit{boundary sheaf}. Recall that the formal completion of $\overline{\mathcal{A}}_{g}$ along $D$ depends only on the support of $D$. Then, by Grothendieck's existence theorem on formal functions \cite[Theorem~4.1.5]{EGA3}, applied to the case of global sections, we have an isomorphism

 \[H^0(\widehat{\overline{\mathcal{A}}}_{g},\widehat{\omega}^k)\cong\varprojlim_m H^0(\overline{\mathcal{A}}_{g},\omega^k\otimes\bigslant{\OO_{\overline{\mathcal{A}}_g}}{\mathcal{J}^m}).\] Therefore, we may write the Fourier--Jacobi homomorphism as \[\FJ\colon H^0(\overline{\mathcal{A}}_g,\omega^k)\to \varprojlim_m H^0(\overline{\mathcal{A}}_{g},\omega^k\otimes\bigslant{\OO_{\overline{\mathcal{A}}_g}}{\mathcal{J}^m}).\] 
 
 We now observe that this natural map can be obtained from the short exact sequence of sheaves \[0\to\mathcal{J}^m\to\OO_{\overline{\mathcal{A}}_g}\to \bigslant{\OO_{\overline{\mathcal{A}}_g}}{\mathcal{J}^m}\to 0.\] Indeed, since $\omega$ is locally free, tensoring with $\omega^k$ gives again a short exact sequence of sheaves \[0\to\omega^k\otimes\mathcal{J}^m\to\omega^k\to \omega^k\otimes\bigslant{\OO_{\overline{\mathcal{A}}_g}}{\mathcal{J}^m}\to 0,\] which induces a long exact sequence \begin{align*}0&\to H^0(\overline{\mathcal{A}}_g,\omega^k\otimes\mathcal{J}^m)\to H^0(\overline{\mathcal{A}}_g,\omega^k)\to H^0(\overline{\mathcal{A}}_g, \omega^k\otimes\bigslant{\OO_{\overline{\mathcal{A}}_g}}{\mathcal{J}^m})\to\\
&\to  H^1(\overline{\mathcal{A}}_g,\omega^k\otimes\mathcal{J}^m)\to H^1(\overline{\mathcal{A}}_g,\omega^k)\to H^1(\overline{\mathcal{A}}_g, \omega^k\otimes\bigslant{\OO_{\overline{\mathcal{A}}_g}}{\mathcal{J}^m})\to\cdots\end{align*} in cohomology.

Next, we would like to apply the inverse limit functor with respect to $m$ to this long exact sequence. Taking an inverse limit is in general only left exact, but in this case our sequence will remain exact, in accordance with the following lemma.

\begin{lem}\label{invlimex}
Let \[0\to A_m\xrightarrow{f_m} B_m\xrightarrow{g_m} C_m\to 0\] be an exact sequence of inverse systems of abelian groups, indexed by $\mathbb{N}$. Suppose further that all the morphisms in the inverse system $\{B_m\}_{m\in\mathbb{N}}$ equal the identity morphism $\id\colon B\to B$ of some abelian group $B$. Then, the sequence \[0\to \varprojlim A_m\xrightarrow{f}\varprojlim B_m\xrightarrow{g}\varprojlim C_m\to 0\] of abelian groups is exact.
\end{lem}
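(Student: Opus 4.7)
The plan is to reduce the proof to two essentially independent pieces: the exactness at the first two terms (which is automatic), and the surjectivity of $g$ (which requires the hypothesis). First, I would invoke the standard fact that the functor $\varprojlim$ from inverse systems of abelian groups (indexed by $\mathbb{N}$) to abelian groups is left exact, being the right adjoint of the constant diagram functor. This immediately supplies the exactness $0 \to \varprojlim A_m \xrightarrow{f} \varprojlim B_m \xrightarrow{g} \varprojlim C_m$ with no use of the hypothesis on $\{B_m\}$.

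For the surjectivity of $g$, I would work directly with a compatible system $(c_m)_m \in \varprojlim C_m$. The first observation is that, via identification along the injections $f_m$, the inverse system $\{A_m\}$ becomes a decreasing chain of subgroups $\cdots \subseteq A_{m+1} \subseteq A_m \subseteq B$ with transition maps given by inclusion, while $\{C_m\}$ becomes the system $\{B/A_m\}$ with quotient maps as transitions. In particular, the transitions on $\{C_m\}$ are automatically surjective. Moreover, commutativity of the diagram with the identity on $B$ shows that if $b \in B$ satisfies $g_m(b) = c_m$, then automatically $g_{m-1}(b) = c_{m-1}$; hence for each $m$ the preimage $T_m := g_m^{-1}(c_m)$ is a nonempty $A_m$-coset, and the $T_m$'s form a decreasing chain $T_1 \supseteq T_2 \supseteq \cdots$. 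What has to be proved is that $\bigcap_m T_m \neq \emptyset$.

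I would now invoke the six-term exact sequence for $\varprojlim$ and $\varprojlim^1$,
\[0 \to \varprojlim A_m \to \varprojlim B_m \to \varprojlim C_m \to \varprojlim\nolimits^1 A_m \to \varprojlim\nolimits^1 B_m \to \varprojlim\nolimits^1 C_m \to 0,\]
and reduce the desired surjectivity to the vanishing of $\varprojlim^1 A_m$. The system $\{B_m\}$ trivially satisfies the Mittag--Leffler condition (identity transitions are surjective), so $\varprojlim^1 B_m = 0$. Using this vanishing together with the identification of the connecting map with the defect in the lifting problem for $(c_m)$, one sees that $\bigcap_m T_m \neq \emptyset$ is exactly the statement $\varprojlim^1 A_m = 0$.

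The main obstacle is therefore precisely this last vanishing, which is the content of the lemma. I would handle it by exploiting the fact that the middle system is \emph{constant}: given any sequence $(x_m)_m \in \prod_m A_m$, one must find $(a_m)_m \in \prod_m A_m$ with $x_m = a_m - a_{m+1}$ (where $a_{m+1}$ is viewed in $A_m$ via inclusion). Starting from an arbitrary $b_1 \in T_1$, I would iteratively adjust by elements of $A_m$ to produce a coherent choice, using at each step that the difference between successive lifts lies in the appropriate $A_m$ and that the identity transitions on $\{B_m\}$ mean no compatibility constraint is imposed on the middle system itself. This telescoping construction, combined with the nesting $A_{m+1} \subseteq A_m$ forced by the identity transitions on $B$, produces the required common lift $b \in \bigcap_m T_m$, completing the argument.
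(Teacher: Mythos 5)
Your reduction to the nonemptiness of $\bigcap_m T_m$, i.e.\ to the vanishing of $\varprojlim^1 A_m$, is exactly right, and you correctly identify the $T_m$ as a decreasing chain of $A_m$-cosets in $B$ once the $A_m$ are identified (via the injective $f_m$ and the identity transitions on $B$) with a nested family of subgroups of $B$. The gap is in the final step. The ``telescoping construction'' you sketch produces, for each $m$, some $b_m\in T_m$ with $b_{m+1}-b_m\in A_m$; but it does not produce a single element of $\bigcap_m T_m$, because $B$ need not be complete for the topology in which $\{A_m\}$ is a neighbourhood basis of $0$, so the sequence $(b_m)$ need not converge in $B$. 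Equivalently, a descending chain of subgroups of a fixed abelian group has no reason to have vanishing $\varprojlim^1$; the constancy of $\{B_m\}$ only controls $\varprojlim^1 B_m$, not $\varprojlim^1 A_m$.

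Indeed the lemma as stated is false. Take $B=\mathbb{Z}$ with identity transitions, $A_m=2^m\mathbb{Z}$ with the inclusion maps, and $C_m=\mathbb{Z}/2^m\mathbb{Z}$ with the reduction maps. This is a short exact sequence of inverse systems with constant middle term, yet $\varprojlim B_m=\mathbb{Z}$ while $\varprojlim C_m=\mathbb{Z}_2$, the $2$-adic integers, and $\mathbb{Z}\to\mathbb{Z}_2$ is not surjective; correspondingly $\varprojlim^1(2^m\mathbb{Z})\cong\mathbb{Z}_2/\mathbb{Z}\neq 0$. For comparison, the paper's own proof contains the same error at the same spot: it asserts that because the transitions on $\{B_m\}$ are identities, the restricted transitions $D_{m+1}\to D_m$ between the fibres $D_m:=g_m^{-1}(c_m)$ are also identities, and concludes that $\varprojlim D_m=D_0$. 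But the restriction of the identity to a smaller subset is an inclusion, not an identity, and $\varprojlim D_m=\bigcap_m D_m$ can be empty. Both your argument and the paper's would become valid under an additional Mittag--Leffler-type hypothesis on $\{A_m\}$ (for instance, that the chain of subgroups $A_m\subseteq B$ stabilizes); without such a hypothesis the conclusion fails.
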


\begin{proof}
Since taking inverse limit is left exact, we only need to prove that $g$ is surjective. Let $(c_m)_{m\in\mathbb{N}}\in\varprojlim C_m$. For each $m\in\mathbb{N}$ we consider the fiber \[D_m\coloneqq g_m^{-1}(c_m)\subseteq B_m,\] which is a non-empty subset of $B_m$ because $g_m$ is surjective. The morphisms in the inverse system $\{B_m\}_{m\in\mathbb{N}}$ restrict, giving $\{D_m\}_{m\in\mathbb{N}}$ the structure of an inverse system of sets. Any element in $\varprojlim D_m$ will then be a preimage of $(c_m)_{m\in\mathbb{N}}$ under $g$, so it suffices to show that $\varprojlim D_m$ is not empty. This is where our additional assumption comes into place: since $\{B_m\}_{m\in\mathbb{N}}$ consists solely of identity morphisms, so does $\{D_m\}_{m\in\mathbb{N}}$. In particular, $\varprojlim D_m=D_0\neq\varnothing$.
\end{proof}

Therefore, we have now located the Fourier--Jacobi homomorphism within the long exact sequence \begin{multline*}0\to \varprojlim_m H^0(\overline{\mathcal{A}}_g,\omega^k\otimes\mathcal{J}^m) \to H^0(\overline{\mathcal{A}}_g,\omega^k)\xrightarrow{\FJ} \varprojlim_m H^0(\overline{\mathcal{A}}_g, \omega^k\otimes\bigslant{\OO_{\overline{\mathcal{A}}_g}}{\mathcal{J}^m})\\
\to  \varprojlim_m H^1(\overline{\mathcal{A}}_g,\omega^k\otimes\mathcal{J}^m)\to H^1(\overline{\mathcal{A}}_g,\omega^k)\to\cdots\end{multline*} of abelian groups. As the main result in \cite{Kra22} shows that $\FJ$ is surjective, we are led to investigate vanishing conditions for the cohomology groups $H^1(\overline{\mathcal{A}}_g,\omega^k\otimes\mathcal{J}^m)$, and for their inverse limit \[\varprojlim_m H^1(\overline{\mathcal{A}}_g,\omega^k\otimes\mathcal{J}^m).\]

Similarly, our cohomological point of view passes to the case when principal level structures are considered. Indeed, let $n\geq 1$ be an integer and let $\mathcal{A}_{g,n}$ be the moduli stack of principally polarized abelian schemes $A\to S$ of relative dimension $g$, with a principal level $n$ structure. We continue denoting simply by $\omega$ the Hodge line bundle on $\overline{\mathcal{A}}_{g,n}$, and we let $\J_n$ denote the coherent sheaf of ideals supported on the boundary divisor $D_n\coloneqq\overline{\mathcal{A}}_{g,n}\setminus\mathcal{A}_{g,n}$ with reduced structure. We denote by $\FJ_n$ the homomorphism which appears in the long exact sequence
\begin{multline*}0\to \varprojlim_m H^0(\overline{\mathcal{A}}_{g,n},\omega^k\otimes\mathcal{J}_n^m) \to H^0(\overline{\mathcal{A}}_{g,n},\omega^k)\xrightarrow{\FJ_n} \varprojlim_m H^0\big(\overline{\mathcal{A}}_{g,n}, \omega^k\otimes\big(\bigslant{\OO_{\overline{\mathcal{A}}_{g,n}}}{\mathcal{J}_n^m}\big)\big)\\
\to  \varprojlim_m H^1(\overline{\mathcal{A}}_{g,n},\omega^k\otimes\mathcal{J}_n^m)\to H^1(\overline{\mathcal{A}}_{g,n},\omega^k)\to\cdots\end{multline*} of abelian groups. 

Now, in the level $n$ case, a Siegel modular form has one Fourier--Jacobi expansion for each cusp; we may observe that the map $\FJ_n$ provides us with all the Fourier--Jacobi expansions of cogenus $1$, as follows. Let $\mathcal{C}_n$ denote the set of cusps of $\overline{\mathcal{A}}_{g,n}$, that is, the set of $\Sp_{2g}(\mathbb{Z})(n)$-orbits of exact sequences \[0\to\Sigma_\mathbb{Z}\to\mathbb{Z}^{2g}\to\mathbb{Z}^{2g}/\Sigma_\mathbb{Z}\to 0\] where $\Sigma_\mathbb{Z}\subseteq\mathbb{Z}^{2g}$ is a co-torsion free subgroup with $\Sigma_\mathbb{Z}^\perp\subseteq\Sigma_\mathbb{Z}$. Recall from Subsection~\ref{pls} that $\overline{\mathcal{A}}_{g,n}$ has a stratification \[\overline{\mathcal{A}}_{g,n}=\coprod_{\sigma\in\Sigma_n}\mathcal{Z}_n(\sigma),\] and there is a natural surjection $\Sigma_n\to\mathcal{C}_n$. Let us also denote by $\mathcal{C}_n(1)$ the subset of $1$-dimensional cusps, that is, those which equal the image of some ray $\sigma\in\Sigma_n(1)$. Then, for each cusp $c\in\mathcal{C}_n(1)$, the union of strata \[D_c\coloneqq\coprod_{\substack{\sigma\in\Sigma_n(1)\\\sigma\mapsto c}} \mathcal{Z}_n(\sigma)\] parametrized by rays lying over $c$ is a divisor and, according to \cite[Corollary IV.6.11]{FC90}, the formal completion of $\overline{\mathcal{A}}_{g,n}$ along $D_c$ is isomorphic to a disjoint union of copies of $\widehat{\overline{\mathcal{E}}}_{\xi,n}$. Let us denote by $\J_c$ the ideal sheaf of $D_c$ with reduced structure. Since \[D=\coprod_{\sigma\in\Sigma_n(1)}\mathcal{Z}_n(\sigma)=\coprod_{c\in\mathcal{C}_n(1)}D_c,\] we then have \[H^0(\overline{\mathcal{A}}_{g,n}, \omega^k\otimes(\bigslant{\OO_{\overline{\mathcal{A}}_{g,n}}}{\mathcal{J}_n^m}))=\bigoplus_{c\in\mathcal{C}_n(1)}H^0\big(\overline{\mathcal{A}}_{g,n}, \omega^k\otimes\bigslant{\OO_{\overline{\mathcal{A}}_{g,n}}}{\mathcal{J}_c^m}\big)\] for every $m\geq 1$. If \[\FJ_c\colon H^0(\overline{\mathcal{A}}_{g,n},\omega^k)\to\varprojlim_m H^0\big(\overline{\mathcal{A}}_{g,n}, \omega^k\otimes\bigslant{\OO_{\overline{\mathcal{A}}_{g,n}}}{\mathcal{J}_c^m}\big)\] denotes the Fourier--Jacobi expansion of cogenus $1$ associated to a given cusp $c\in\mathcal{C}_n(1)$, we then see that the homomorphism \[\FJ_n\colon H^0(\overline{\mathcal{A}}_{g,n},\omega^k)\to\varprojlim_m H^0\big(\overline{\mathcal{A}}_{g,n}, \omega^k\otimes\bigslant{\OO_{\overline{\mathcal{A}}_{g,n}}}{\mathcal{J}_n^m}\big)\] decomposes as $\FJ_n=\bigoplus_{c\in\mathcal{C}_n(1)}\FJ_c$. 

In Theorem~\ref{surjeq}, we will prove that, when $n\geq 3$ and $k$ is sufficiently large, the homomorphism $\FJ_n$ is surjective if, and only if, the cohomological vanishing \[\varprojlim_m H^1(\overline{\mathcal{A}}_{g,n},\omega^k\otimes\mathcal{J}_n^m)=0\] holds. 
\section{Rational singularities and modularity}\label{chc}
    In the present section, we will make use of results from the theory of rational singularities. To our knowledge, these results are only available over algebraically closed fields of characteristic zero
    . Therefore, throughout this section we work over the field $\mathbb{C}$ of complex numbers.

   We begin in Subsection~\ref{projinv} by making a short note recalling how one determines the quotient of a projective scheme by the action of a finite group. In Subsection~\ref{ratsingg2}, following an idea suggested by R. Salvati Manni, we exhibit the minimal compactification $\mathcal{A}_2^*$ as the quotient of a smooth projective variety by a finite group of automorphisms. We then deduce that $\mathcal{A}_2^*$ has only rational singularities, and we derive some implications of this fact in connection with the modularity of formal Fourier--Jacobi series.

\subsection{Invariants of a projective scheme under a finite group}\label{projinv}

Let $X$ be a scheme and let $G$ be a group of automorphisms of $X$. By \textit{a quotient of} $X$ \textit{by} $G$ we mean a pair $(Y,p)$, where $Y$ is a scheme and $p\colon X\to Y$ is a morphism such that $p\circ g=p$ for all $g\in G$, satisfying the following universal property: for every morphism $f\colon X \to Z$ such that $f\circ g=f$ for all $g\in G$, there exists a unique morphism $\overline{f}\colon Y\to Z$ such that $\overline{f}\circ p=f$. When a quotient $(Y,p)$ of $X$ by $G$ exists, it follows from the definition that it is unique up to a unique isomorphism. We denote $(Y,p)$ by $X/G$ and call it \textit{the quotient of} $X$ \textit{by} $G$.

It is well known how to construct the quotient of an affine scheme by a finite group of automorphisms. Let $A$ be a commutative ring and let $G$ be a finite group of ring automorphisms of $A$. We denote by $A^G$ the subring of invariants of $A$ by the action of $G$, and we let $p\colon \Spec A \to\Spec A^G$ be the morphism induced by the inclusion $A^G\subseteq A$. Then, $(\Spec A^G,p)$ is the quotient of $\Spec A$ by $G$, and moreover the fibres of $p$ are precisely the orbits of the $G$-action on $\Spec A$. See, e.g., \cite[Proposition V.1.1]{SGA1} or \cite[Proposition~12.27]{GW20} for a proof.

The analogous statement for projective schemes holds too, and is also well known, but we have not been able to find a written proof. Most texts go directly from the affine case, to the case of a scheme with a $G$-action, such that every $G$-orbit is contained in an open affine subscheme (which includes all quasi-projective schemes), but they only show that the quotient exists. Therefore, we have thought it is worthwhile to pinpoint the projective case here.

\begin{lem}\label{projquot}
	Let $S$ be a graded ring and let $G$ be a finite group of graded ring automorphisms of $S$. Let $S^G$ be the subring of invariants of $S$ by the action of $G$, and let $p\colon \Proj S\to\Proj S^G$ be the induced morphism. Then, $(\Proj S^G,p)$ is the quotient of $\Proj S$ by $G$.
\end{lem}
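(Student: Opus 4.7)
The plan is to reduce the projective quotient to the affine one by covering $\Proj S$ by $G$-invariant affine opens indexed by homogeneous elements of $S^G$, transfer the universal property to each chart, and glue. The main point to verify is that the morphism $p$ is defined on the whole of $\Proj S$, which amounts to checking that no point of $\Proj S$ lies over the irrelevant locus of $\Proj S^G$.

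First I would check that $p$ is globally defined. A homogeneous prime $\mathfrak{p}\subset S$ with $\mathfrak{p}\not\supseteq S_+$ gives a well-defined point of $\Proj S^G$ precisely when $\mathfrak{p}\cap S^G\not\supseteq S^G_+$. Suppose for contradiction that $S^G_+\subseteq\mathfrak{p}$. For any homogeneous $f\in S_+$, the norm $N_G(f)\coloneqq\prod_{g\in G}g(f)$ is homogeneous of positive degree and $G$-invariant, so $N_G(f)\in S^G_+\subseteq\mathfrak{p}$; since $\mathfrak{p}$ is prime, some $g(f)\in\mathfrak{p}$, i.e.\ $f\in g^{-1}(\mathfrak{p})$. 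Therefore $S_+\subseteq\bigcup_{g\in G}g^{-1}(\mathfrak{p})$, and since $G$ is finite, prime avoidance yields $S_+\subseteq g^{-1}(\mathfrak{p})$ for some $g$. As each $g$ is a graded automorphism, $g(S_+)=S_+$, hence $S_+\subseteq\mathfrak{p}$, contradicting $\mathfrak{p}\in\Proj S$. Consequently $p$ is defined on all of $\Proj S$, and by construction $p\circ g=p$ for all $g\in G$.

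Next I would build the $G$-invariant affine cover. For every homogeneous $a\in S^G_+$, the open $D_+(a)\subset\Proj S$ is $G$-invariant because $g(a)=a$; the argument just given shows these sets cover $\Proj S$. Moreover, because $a$ is $G$-invariant, there is an isomorphism $(S_{(a)})^G\cong(S^G)_{(a)}$: the inclusion is clear, and for the surjectivity one takes $s/a^n\in S_{(a)}$ that is $G$-fixed, observes that $a^M g(s)=a^M s$ for all $g\in G$ with a common $M$ (using finiteness of $G$), and concludes that $a^M s\in S^G$ represents the same class. Consequently, $p^{-1}(D_+(a)_{S^G})=D_+(a)_S$, and the restriction $p\colon\Spec S_{(a)}\to\Spec(S_{(a)})^G$ is precisely the affine quotient, which is known to realize the universal property in the affine setting (\cite[Proposition V.1.1]{SGA1}).

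Finally, I would assemble the universal property. Given a $G$-invariant morphism $f\colon\Proj S\to Z$, its restriction to each $D_+(a)_S$ is $G$-invariant and hence factors uniquely through the affine quotient as $\bar f_a\colon D_+(a)_{S^G}\to Z$. On the overlap $D_+(ab)_{S^G}$, both $\bar f_a$ and $\bar f_b$ are factorizations of $f|_{D_+(ab)_S}$ through the affine quotient $D_+(ab)_S\to D_+(ab)_{S^G}$, so uniqueness forces them to agree. The open sets $D_+(a)_{S^G}$ cover $\Proj S^G$ by the same prime-avoidance observation, and gluing the $\bar f_a$ yields a unique morphism $\bar f\colon\Proj S^G\to Z$ with $\bar f\circ p=f$. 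The step I expect to require the most care is the well-definedness of $p$, since it is the one place the global geometry (the irrelevant ideal) interacts non-trivially with the group action; everything afterwards is a bookkeeping argument standard for gluing schemes.
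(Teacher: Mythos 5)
Your proof follows the same skeleton as the paper's: cover $\Proj S$ by $G$-invariant affine opens $D_+(a)$ indexed by homogeneous $a\in S^G_+$, invoke the affine quotient $\Spec S_{(a)}\to\Spec(S_{(a)})^G$ on each chart, and glue. The parts you spell out more than the paper does — the identification $(S_{(a)})^G\cong(S^G)_{(a)}$ and the direct verification of the universal property on overlaps — are welcome details; the paper leans on citations to SGA1 for the gluing.

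The one place you genuinely diverge, and where there is a gap, is the covering lemma. After showing that every \emph{homogeneous} $f\in S_+$ lies in $\bigcup_{g\in G}g^{-1}(\mathfrak{p})$ via the norm $N_G(f)$, you assert ``Therefore $S_+\subseteq\bigcup_{g\in G}g^{-1}(\mathfrak{p})$'' and apply prime avoidance. That inclusion has not been established: $S_+$ contains non-homogeneous elements, and a sum of homogeneous elements each lying in some $g^{-1}(\mathfrak{p})$ need not lie in any single one, so you cannot pass from the homogeneous elements to the whole ideal for free. What would rescue the step is the \emph{graded} form of prime avoidance (if every homogeneous element of a homogeneous ideal lies in a finite union of homogeneous primes, then the ideal is contained in one of them); this does hold in your situation, since the $g^{-1}(\mathfrak{p})$ are homogeneous because each $g$ is graded and $S$ is $\mathbb{N}$-graded, but it is not the textbook version and needs to be invoked and justified explicitly.

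The paper avoids prime avoidance altogether by using all the elementary symmetric polynomials of $\sigma_1(f),\dots,\sigma_n(f)$, not just the norm. These coefficients lie in $S^G_+\subseteq\mathfrak{p}$, so the monic polynomial $F(t)=\prod_i(t-\sigma_i(f))\in S[t]$ reduces to $t^n$ modulo $\mathfrak{p}$; since $F(f)=0$, one gets $f^n\in\mathfrak{p}$ and hence $f\in\mathfrak{p}$ directly. This is precisely the integrality of $S$ over $S^G$, it works for any $f\in S_+$ without homogeneity restrictions, and it produces the contradiction $S_+\subseteq\mathfrak{p}$ in a single step. Your norm-plus-avoidance route is repairable, but the integral-dependence argument is cleaner and is the one the paper uses.
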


\begin{proof}
Let $X\coloneqq \Proj S$ and let $S=\oplus_{d\geq 0}S_d$ be the grading for $S$. Then $S^G=\oplus_{d\geq 0}(S_d\cap S^G)$ is the induced grading on $S^G$. We let $S_+\coloneqq \oplus_{d>0}S_d$ and $S^G_+\coloneqq \oplus_{d>0}(S_d\cap S^G)$. For each $f\in S_+$ let $X_f\coloneqq \{\p\in\Proj S\mid f\notin\p\}$, so $\{X_f\}_{f\in S_+}$ is an affine open cover of $X$. We will show that, moreover, the subset $\{X_f\}_{f\in S^G_+}$ is an affine open cover of $X$ and $X_f$ is stable under the action of $G$ for every $f\in S^G_+$.

Indeed, let $f\in S^G_+$ and $\sigma\in G$. Then, for every $\p\in X_f$ we have $\sigma(f)=f\notin\p$, so $f\notin \sigma^{-1}(\p)$ and $\sigma^{-1}(\p)\in X_f$. Therefore $X_f$ is stable under the action of $G$ for every $f\in S^G_+$. Let us now show that \[X=\bigcup_{f\in S^G_+} X_f.\] Let $\p \in X$ and assume $\p\notin X_f$ for every $f\in S^G_+$, so $S^G_+\subseteq\p$. Let $f\in S_+$ and enumerate the finitely many elements of $G$ as $\sigma_1,...,\sigma_n$. We note that every elementary symmetric polynomial in $\sigma_1(f),...,\sigma_n(f)$ is invariant under $G$, and moreover has positive degree because $G$ respects the grading, so is an element of $S^G_+$. Since $S^G_+\subseteq\p$, we deduce that the reduction modulo $\p$ of the polynomial $F(t)=(t-\sigma_1(f))\cdots(t-\sigma_n(f))\in S[t]$ is equal to $t^n$. But $f$ is a root of $F$, so $f^n\in\p$, which implies $f\in\p$ since $\p$ is a prime ideal. Therefore $S+\subseteq\p$, contradicting the fact that $\p\in X=\Proj S$. Thus we conclude that $\{X_f\}_{f\in S^G_+}$ is a cover of $X$ consisting of affine, open $G$-stable subschemes.

Let $f\in S^G_+$ and consider the restriction of the action of $G$ to $X_f$. It is known that $X_f\cong\Spec S_{(f)}$ where $S_{(f)}$ is the subring of elements of degree 0 in the localized ring $S_f$, so we have quotient morphisms \[p_f\colon X_f\to\Spec S_{(f)}^G\] of affine schemes. Given any other $g\in S^G_+$ we note that, since the fibres of $p_f,p_g$ are the orbits of the $G$-action on $X_f,X_g$ respectively, we have \[p_f^{-1}p_f(X_f\cap X_g)=X_f\cap X_g=p_g^{-1}p_g(X_f\cap X_g).\] Now, by \cite[Corollaire V.1.4]{SGA1} both $p_f(X_f\cap X_g)$ and $p_g(X_f\cap X_g)$ are a quotient of $X_f\cap X_g$ by $G$, so they are canonically isomorphic. Then, by the proof of \cite[Proposition V.1.8]{SGA1} the glueing of the morphisms $p_f\colon X_f\to\Spec S_{(f)}^G$ constitutes a quotient of $X=\Proj S$ by $G$. But this glueing is canonically isomorphic to $\Proj S^G$, since $Y_f\coloneqq \{\p\in\Proj S^G\mid f\notin\p]\}\cong\Spec S^G_{(f)}$ for every $f\in S^G_+$ and $\{Y_f\}_{f\in S^G_+}$ is an open cover of $\Proj S^G$. Therefore, $(\Proj S)/G=\Proj S^G$.  
\end{proof}

\subsection{\texorpdfstring{$\mathcal{A}_2^*$}{A2*} has rational singularities}\label{ratsingg2}
It is shown in \cite{Tsuy86} and \cite{Tsuy88} that, if $g\geq 3$, then  $\mathcal{A}_g^*$ is not a Cohen--Macaulay scheme. If $\overline{\mathcal{A}}_g$ is any smooth toroidal compactification, it then follows from Kempf's criterion \cite[p. 50]{KKMS73} that the natural morphism $\pi\colon \overline{\mathcal{A}}_g\to\mathcal{A}_g^*$ is not a rational resolution. Therefore, if $g\geq 3$, the singularities of $\mathcal{A}_g^*$ are not rational. On the other hand, we will show in the present subsection that $\mathcal{A}_2^*$ is isomorphic to the quotient of Igusa's smooth modular variety $\mathcal{A}^*_2(2,4)$ by the action of a finite group. We will then conclude that $\mathcal{A}_2^*$ has only rational singularities, via Viehweg's result that quotient singularities are rational.

Let us first recall the concept of rational singularities.

\begin{defin}
    Let $Y$ be a reduced, irreducible, normal scheme of finite type over $\mathbb{C}$, and let $y\in Y$. 
    \begin{itemize}
        \item [(i)] We say that a morphism $f\colon X\to Y$ is a \textit{resolution} if $X$ is smooth and $f$ is proper, surjective and birational.
        \item [(ii)] We say that a resolution $f\colon X\to Y$ is \textit{rational} if $f_*\mathcal{O}_X\cong\mathcal{O}_Y$ and $R^if_*\mathcal{O}_X=0$ for all $i>0$.
        \item [(iii)] We say that $Y$ has a \textit{rational singularity at} $y$ if there exists an open neighborhood $U\ni y$ such that every resolution $X\to U$ is rational.
        \item [(iv)] We say that $Y$ has a \textit{quotient singularity at} $y$ if there exist an open neighborhood $U\ni y$ and a finite group $G$ acting on a smooth scheme $X$, such that $X/G$ exists and $X/G\cong U$.
        \item[(v)] We say that $Y$ has only rational singularities (resp. quotient singularities) if it has a rational singularity (resp. a quotient singularity) at every point.
    \end{itemize}
    \end{defin}
    According to \cite{Vie77} and the references therein, every regular point of a scheme is a rational singularity and, for $Y$ to have a rational singularity at $y\in U$, it suffices that one rational resolution $X\to U$ exists. According to \cite[Lemma~1]{Vie77}, $Y$ has only rational singularities if and only if every resolution $X\to Y$ is rational. Furthermore, according to \cite[Proposition~1]{Vie77} every quotient singularity is a rational singularity. 

Now, in order to introduce Igusa's modular variety $\mathcal{A}^*_2(2,4)$, we need to recall a few preliminary definitions. In Subsection~\ref{siegel} we defined Siegel modular forms with respect to the  Siegel modular group $\Gamma_g=\Sp_{2g}(\mathbb{Z})$. We now recall the straightforward generalization that includes the case of invariance with respect to normal subgroups of finite index in $\Gamma_g$.  

Let $\Gamma\trianglelefteq\Gamma_g$ be a normal subgroup of finite index. For each integer $k\in\mathbb{Z}$, we define the $\textit{weight-k operator}$ $\cdot$ on functions $f\colon \mathcal{H}_g\to\mathbb{C}$ by \[(f\cdot\gamma)(\tau)=\det(c\tau+d)^{-k}f(\gamma(\tau))\] for $\tau\in\mathcal{H}_g$ and $\gamma=\begin{psmallmatrix}
    a &b\\
    c &d
\end{psmallmatrix}\in\Gamma_g$. A \textit{Siegel modular form of genus g and weight k for} $\Gamma$ is thus defined as a holomorphic map $f\colon \mathcal{H}_g\to\mathbb{C}$, such that $f\cdot\gamma=f$ for every $\gamma\in\Gamma$ (plus the usual condition at infinity if $g=1$). They constitute a $\mathbb{C}$-vector space, which we denote by $M_k(\Gamma)$, and $M_\bullet(\Gamma)\coloneqq\oplus_{k\in\mathbb{Z}}M_k(\Gamma)$ is a graded ring.

\begin{lem}\label{action}
The weight-$k$ operator defines a right linear action of the quotient group $\Gamma_g/\Gamma$ on the vector space $M_k(\Gamma)$. The subspace of invariants of this action is given by $M_k(\Gamma_g)$.
\end{lem}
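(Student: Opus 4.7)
The plan is to reduce the lemma to the cocycle identity for the automorphy factor $j(\gamma,\tau)\coloneqq\det(c\tau+d)$, together with the normality of $\Gamma$ in $\Gamma_g$ and the defining condition of $M_k(\Gamma)$. The only genuine computation will be verifying the cocycle identity; everything else will be formal.

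First I would verify the cocycle identity. Writing $\gamma_1\gamma_2 = \begin{psmallmatrix} a & b \\ c & d\end{psmallmatrix}$ and expanding the block-matrix product, one checks that
\[
c\tau + d = \bigl(c_1\gamma_2(\tau) + d_1\bigr)(c_2\tau + d_2),
\]
so taking determinants yields $j(\gamma_1\gamma_2,\tau) = j(\gamma_1,\gamma_2(\tau))\,j(\gamma_2,\tau)$. Substituting this into the definition of the weight-$k$ operator gives $(f\cdot\gamma_1)\cdot\gamma_2 = f\cdot(\gamma_1\gamma_2)$, while $f\cdot 1 = f$ is immediate. Hence the weight-$k$ operator defines a right action of $\Gamma_g$ on the space of holomorphic functions $\HH_g\to\mathbb{C}$; linearity in $f$ is clear because $\gamma$ acts by precomposition with $\gamma$ followed by multiplication by the nowhere-vanishing holomorphic function $j(\gamma,\cdot)^{-k}$.

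Next I would show that this action preserves $M_k(\Gamma)$ and factors through $\Gamma_g/\Gamma$. Given $f\in M_k(\Gamma)$ and $\gamma\in\Gamma_g$, the function $f\cdot\gamma$ is holomorphic, and for any $\gamma'\in\Gamma$ the normality of $\Gamma$ gives $\gamma\gamma'\gamma^{-1}\in\Gamma$, whence
\[
(f\cdot\gamma)\cdot\gamma' = f\cdot(\gamma\gamma') = \bigl(f\cdot(\gamma\gamma'\gamma^{-1})\bigr)\cdot\gamma = f\cdot\gamma.
\]
For $g=1$, holomorphicity at infinity of $f\cdot\gamma$ follows because $\Gamma_g$ permutes the finitely many cusps of $\Gamma$. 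Thus $f\cdot\gamma\in M_k(\Gamma)$. Since by definition $f\cdot\gamma' = f$ for every $\gamma'\in\Gamma$, the action of $\Gamma$ on $M_k(\Gamma)$ is trivial, so the $\Gamma_g$-action descends to a right linear action of $\Gamma_g/\Gamma$.

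Finally, an element $f\in M_k(\Gamma)$ lies in the invariant subspace for $\Gamma_g/\Gamma$ if and only if $f\cdot\gamma = f$ for every $\gamma\in\Gamma_g$, which is exactly the condition that $f\in M_k(\Gamma_g)$. I do not anticipate a real obstacle: the only non-tautological input is the cocycle identity above, and everything else is a direct unwinding of definitions.
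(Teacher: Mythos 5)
Your proposal is correct and follows essentially the same route as the paper: verify the cocycle identity for the automorphy factor, use normality of $\Gamma$ to show the action preserves $M_k(\Gamma)$ and factors through $\Gamma_g/\Gamma$, and note that invariance is by definition equivalent to being in $M_k(\Gamma_g)$. Your explicit remark about the cusp condition when $g=1$ is a small courtesy the paper leaves implicit, but otherwise the two arguments coincide.
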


\begin{proof}
Let \[
\gamma=\begin{pmatrix}
a &b \\ c &d
\end{pmatrix}, \ \gamma'=\begin{pmatrix}
a' &b' \\ c' &d'
\end{pmatrix}\in\Gamma_g.\]
\noindent We have
\begin{align*}
(c'\tau+d')(c\gamma'(\tau)+d)&=(c'\tau+d')(c(a'\tau+b')(c'\tau+d')^{-1}+d)\\
&=c(a'\tau+b')+d(c'\tau+d')\\
&=(ca'+dc')\tau+cb'+dd'
\end{align*} 
for every $\tau\in\mathcal{H}_g$, so
\begin{align*}
(f\cdot\gamma)\cdot\gamma'(\tau)&=\det(c'\tau+d')^{-k}(f\cdot\gamma)(\gamma'(\tau))\\
&=\det(c'\tau+d')^{-k}\det(c\gamma'(\tau)+d)^{-k}f(\gamma(\gamma'(\tau)))\\
&=\det((c'\tau+d')(c\gamma'(\tau)+d))^{-k}f(\gamma(\gamma'(\tau)))\\
&=\det((ca'+dc')\tau+cb'+dd')^{-k}f(\gamma(\gamma'(\tau)))\\
&=f\cdot(\gamma\gamma')(\tau)
\end{align*}

\noindent for every $f\in M_k(\Gamma)$, $\tau\in\mathcal{H}_g$.

Given $\alpha\in\Gamma_g$ and $f\in M_k(\Gamma)$, let us show that $f\cdot\alpha\in M_k(\Gamma)$. We have:
\begin{align*}
(f\cdot\alpha)\cdot\gamma&=f\cdot \alpha \ \ \ \forall \gamma\in\Gamma\\
\iff \ \ \ \ \  f\cdot(\alpha\gamma)&=f\cdot\alpha \ \ \ \forall \gamma\in\Gamma\\
\iff f\cdot(\alpha\gamma\alpha^{-1})&=f \ \ \ \ \ \ \ \forall \gamma\in\Gamma\\
\Longleftarrow \ \ \ \ \ \ \ \ \alpha\gamma\alpha^{-1} &\in \Gamma \ \ \ \ \ \ \ \forall \gamma\in\Gamma,
\end{align*} and the latter holds because $\Gamma\unlhd\Gamma_g$. Hence $f\cdot\alpha\in M_k(\Gamma)$, and the action
\[
M_k(\Gamma)\times\Gamma_g\to M_k(\Gamma)
\] given by the assignment $(f,\alpha)\mapsto f\cdot\alpha$ is well defined. Since $f\cdot\gamma=f$ for every $\gamma\in\Gamma$, it descends to a well defined, linear action \[
	M_k(\Gamma)\times \bigslant{\Gamma_g}{\Gamma}\to M_k(\Gamma).
\] By definition, the subspace of invariants of this action is given by $M_k(\Gamma_g)$.
\end{proof}

Let $n\geq 1$ be an integer. We define the \textit{principal congruence subgroup of level} $n$ as
\[\Gamma_g(n)\coloneqq \{\gamma\in\Gamma_g \ | \ \gamma\equiv 1_{2g}\Mod{n}\}\leq\Gamma_g.\] A subgroup of $\Gamma_g$ is called a \textit{congruence subgroup of level} $n$ if it contains $\Gamma_g(n)$. For example, we consider the Igusa groups \[\Gamma_g(n,2n)\coloneqq\big\{\gamma=\begin{pmatrix}
a &b \\ c &d
\end{pmatrix}\in\Gamma_g \ | \ \gamma\equiv 1_{2g} \Mod{n}, \  (a^tb)_0\equiv (c^td)_0\equiv 0 \Mod{2n} \big\}\] as introduced in \cite{Igu64a}. Here, if $s$ is a square matrix, the vector formed by its diagonal coefficients is denoted by $(s)_0$. We have inclusions \[\Gamma_g(2n)\subset\Gamma_g(n,2n)\subset\Gamma_g(n),\] so $\Gamma_g(n,2n)$ is a congruence subgroup of level $2n$. If $n$ is even, then $\Gamma_g(n,2n)$ is a normal subgroup of $\Gamma_g$ and its index in $\Gamma_g(n)$ is $2^{2g}$ \cite[Lemma~1]{Igu64a}. Hence, by Lemma~\ref{action}, the quotient group $\Gamma_g/\Gamma_g(n,2n)$ acts on $M_k(\Gamma_g(n,2n))$. This action extends linearly to the full graded ring $M_\bullet(\Gamma_g(n,2n))$.

    \begin{thm}\label{maina2}
        The minimal compactification $\mathcal{A}_2^*$ has only rational singularities.
    \end{thm}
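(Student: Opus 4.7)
The plan is to realize the minimal compactification $\mathcal{A}_2^*$ as the quotient of Igusa's smooth projective variety by the action of a finite group, and then to conclude by invoking the fact that every quotient singularity in characteristic zero is rational.

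First, I would rely on Igusa's classical result (established in his work on genus~$2$ Siegel modular forms) that the projective variety
\[
\mathcal{A}_2^*(2,4) := \Proj\bigl(M_\bullet(\Gamma_2(2,4))\bigr)
\]
is smooth. This is the key nontrivial analytic input, obtained from the explicit structure of the graded ring of Siegel modular forms for the Igusa subgroup $\Gamma_2(2,4)$.

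Next, since $\Gamma_2(2,4)$ is a normal subgroup of finite index in $\Gamma_2$ (because $n=2$ is even), Lemma~\ref{action} endows the graded ring $M_\bullet(\Gamma_2(2,4))$ with a linear action of the finite quotient group $G := \Gamma_2/\Gamma_2(2,4)$ by graded ring automorphisms, and identifies the subring of invariants as $M_\bullet(\Gamma_2(2,4))^{G} = M_\bullet(\Gamma_2)$. Applying Lemma~\ref{projquot} to this graded action then yields
\[
\mathcal{A}_2^* = \Proj\bigl(M_\bullet(\Gamma_2)\bigr) = \Proj\bigl(M_\bullet(\Gamma_2(2,4))^{G}\bigr) \cong \mathcal{A}_2^*(2,4)/G,
\]
exhibiting $\mathcal{A}_2^*$ as the quotient of a smooth projective variety by a finite group of automorphisms.

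Consequently, every point of $\mathcal{A}_2^*$ is a quotient singularity (regular points being trivially so), and by Viehweg's result \cite[Proposition~1]{Vie77} every quotient singularity over $\mathbb{C}$ is rational. Therefore $\mathcal{A}_2^*$ has only rational singularities. The main obstacle in this plan is the smoothness of Igusa's variety $\mathcal{A}_2^*(2,4)$, which is an external input from the explicit theory of genus~$2$ Siegel modular forms; the remaining steps are formal consequences of Lemmas~\ref{action} and~\ref{projquot} combined with Viehweg's theorem.
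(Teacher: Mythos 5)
Your proposal follows exactly the argument given in the paper: exhibit $\mathcal{A}_2^*$ as $\mathcal{A}_2^*(2,4)/G$ with $G=\Gamma_2/\Gamma_2(2,4)$ via Lemma~\ref{action} and Lemma~\ref{projquot}, invoke Igusa's smoothness of $\mathcal{A}_2^*(2,4)$, and conclude by Viehweg's theorem that quotient singularities are rational. The proposal is correct and takes essentially the same route.
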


    \begin{proof}
       The finite group \[G\coloneqq \bigslant{\Gamma_2}{\Gamma_2(2,4)}\] acts on the graded ring $S\coloneqq M(\Gamma_2(2,4))$ with subring of invariants $S^G=M(\Gamma_2)$. If $\mathcal{A}^*_2(2,4)\coloneqq \Proj M_\bullet(\Gamma_2(2,4))$, we then have \[\bigslant{\mathcal{A}^*_2(2,4)}{G}=\mathcal{A}^*_2 \] by Lemma~\ref{projquot}. Furthermore, since $\mathcal{A}^*_2(2,4)$ is smooth \cite[p. 397]{Igu64b}, we deduce that $\mathcal{A}^*_2$ has only quotient singularities. But every quotient singularity is a rational singularity \cite[Proposition~1]{Vie77}, so $\mathcal{A}^*_2$ has only rational singularities.
    \end{proof}

    \begin{rmk}
        Let $\mathcal{A}_{g,n}$ denote the moduli space of $g$-dimensional principally polarized abelian varieties with a principal level $n$ structure. Since $\Gamma_2(2,4)\subseteq\Gamma_2(2)$, the same argument as in the proof of Theorem~\ref{maina2} shows that $\mathcal{A}^*_{2,2}=\Proj M_\bullet(\Gamma_2(2))$ has only quotient singularities. On the other hand, if $n\geq 3$, it is no longer the case that $\Gamma_2(2,4)\subseteq\Gamma_2(n)$. In fact, since $\Gamma_2(n)$ has no elements of finite order (other than $\pm 1_{2g}$) for $n\geq 3$, it turns out that $\mathcal{A}^*_{2,n}=\Proj M_\bullet(\Gamma_2(n))$ has quite bad singularities (see \cite[Corollary~2]{Igu64b} for a precise statement).
    \end{rmk}

\begin{cor}\label{vanw2}
    Let  $\overline{\mathcal{A}}_2$ be a smooth toroidal compactification of $\mathcal{A}_2$. There exists an integer $k_0\geq 0$ such that, if $k\geq k_0$, then \[H^i(\overline{\mathcal{A}}_2,\omega^k)=0\] for all $i>0$.
\end{cor}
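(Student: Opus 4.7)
The strategy is to transfer cohomological vanishing from $\mathcal{A}_2^*$, where Serre's cohomological criterion of ampleness is available and Theorem~\ref{maina2} provides rational singularities, to the smooth toroidal compactification $\overline{\mathcal{A}}_2$ through the canonical morphism $\pi\colon\overline{\mathcal{A}}_2\to\mathcal{A}_2^*$.

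Since $\overline{\mathcal{A}}_2$ is smooth and $\mathcal{A}_2^*$ has only rational singularities by Theorem~\ref{maina2}, the morphism $\pi$ is a rational resolution, so $\pi_*\mathcal{O}_{\overline{\mathcal{A}}_2}=\mathcal{O}_{\mathcal{A}_2^*}$ and $R^i\pi_*\mathcal{O}_{\overline{\mathcal{A}}_2}=0$ for every $i>0$. Furthermore, by the discussion at the end of Subsection~\ref{torcomp}, there exist an integer $m\geq 1$ and a very ample invertible sheaf $\mathcal{L}$ on $\mathcal{A}_2^*$ with $\pi^*\mathcal{L}=\omega^m$.

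For arbitrary $k\in\mathbb{N}$, the plan is to write $k=ms+r$ with $0\leq r\leq m-1$, and apply the projection formula to obtain
\[R^q\pi_*\omega^k=R^q\pi_*(\pi^*\mathcal{L}^s\otimes\omega^r)=\mathcal{L}^s\otimes R^q\pi_*\omega^r\]
for every $q\geq 0$. Since the coherent sheaves $R^q\pi_*\omega^r$ form a finite family as $(q,r)$ ranges in $\{0,\ldots,3\}\times\{0,\ldots,m-1\}$, Serre's cohomological criterion of ampleness on $\mathcal{A}_2^*$, applied uniformly to each of them, yields an integer $s_0$ such that
\[H^p\bigl(\mathcal{A}_2^*,\mathcal{L}^s\otimes R^q\pi_*\omega^r\bigr)=0\]
for all $p\geq 1$, all $q,r$, and all $s\geq s_0$. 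The Leray spectral sequence for $\pi$ therefore degenerates at $E_2$ for such $s$, giving
\[H^i(\overline{\mathcal{A}}_2,\omega^{ms+r})\cong H^0\bigl(\mathcal{A}_2^*,\mathcal{L}^s\otimes R^i\pi_*\omega^r\bigr)\]
for every $i\geq 1$.

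The crucial remaining step, and the main obstacle, will be to verify that $R^i\pi_*\omega^r=0$ for every $i\geq 1$ and every $0\leq r\leq m-1$, since this will force the right-hand side above to vanish (if $R^i\pi_*\omega^r$ were a nonzero coherent sheaf on $\mathcal{A}_2^*$, then $H^0(\mathcal{A}_2^*,\mathcal{L}^s\otimes R^i\pi_*\omega^r)$ would itself be nonzero for $s$ large by ampleness of $\mathcal{L}$). For $r=0$ this vanishing is precisely the rational resolution statement recorded above. For $0<r<m$, the plan is to exploit the identity $\omega^m=\pi^*\mathcal{L}$: the restriction of $\omega^r$ to every fiber of $\pi$ becomes a torsion element of the Picard group of the fiber, and the toroidal structure of $\overline{\mathcal{A}}_2$ together with the rational singularities of $\mathcal{A}_2^*$ will allow one to deduce the required vanishing via the formal function theorem. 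Setting $k_0\coloneqq ms_0$ then yields the statement of the corollary.
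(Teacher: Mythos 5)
The route you take is genuinely different from the paper's, and it introduces a gap that is not filled. The paper's proof takes the ample sheaf $\mathcal{L}$ on $\mathcal{A}^*_2$ with $\pi^*\mathcal{L}=\omega$ (not merely $\pi^*\mathcal{L}=\omega^m$), so the projection formula applied to the rational resolution $\pi$ immediately yields $R^i\pi_*\omega^k=\mathcal{L}^k\otimes R^i\pi_*\mathcal{O}_{\overline{\mathcal{A}}_2}=0$ for all $i>0$ and all $k$, whereupon Leray collapses and Serre's vanishing on $\mathcal{A}^*_2$ finishes the proof in one line. No case distinction on $k$ modulo $m$ is needed because $m=1$.

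You instead only assume that some tensor power $\omega^m$ descends, writing $k=ms+r$ and reducing the corollary to the assertion that $R^i\pi_*\omega^r=0$ for $i>0$ and $0<r<m$. You correctly identify this as the crucial step, but the paragraph you offer in its place is not a proof. That the restriction of $\omega^r$ to a fibre of $\pi$ is torsion in the Picard group of that fibre does not by itself kill the higher direct images; the rational-singularity hypothesis gives $R^i\pi_*\mathcal{O}_{\overline{\mathcal{A}}_2}=0$, but there is no general principle extending this to twists by line bundles that are torsion but nontrivial on fibres. Invoking the formal function theorem and the toroidal structure would require an actual computation of the cohomology of $\omega^r$ on the formal neighbourhood of each fibre, which you do not attempt. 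To close the gap you would either need to prove directly that $\omega$ (and not just $\omega^m$) descends to an ample line bundle on $\mathcal{A}^*_2$, as the paper's proof tacitly assumes, or to establish the vanishing of $R^i\pi_*\omega^r$ for the intermediate powers by a separate argument; neither is done here.
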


\begin{proof}
    Since $\mathcal{A}^*_2$ has only rational singularities, every resolution $X\to\mathcal{A}^*_2$ is rational \cite[Lemma~1]{Vie77}. In particular, the natural morphism $\pi\colon \overline{\mathcal{A}}_2\to\mathcal{A}^*_2$ is a rational resolution, so \[R^i\pi_*\mathcal{O}_{\overline{\mathcal{A}}_2}=0\] for all $i>0$. Let $\mathcal{L}$ denote the ample invertible sheaf on $\mathcal{A}^*_2$ satisfying $\pi^*\mathcal{L}=\omega$. Then, by the projection formula and the usual degenerate Leray spectral sequence argument, we have
    \begin{align*}
 H^i(\overline{\mathcal{A}}_2, \omega^{k})&\cong H^i(\mathcal{A}^*_2, \pi_*\omega^{k})\\
 &\cong H^i(\mathcal{A}^*_2, \pi_*(\OO_{\overline{\A}_2})\otimes \LL^k)\\
 &\cong H^i(\mathcal{A}^*_2, \OO_{\A^*_2}\otimes \LL^k)\\
 &\cong H^i(\mathcal{A}^*_2, \LL^k)
\end{align*} for all $i>0$. The claim thus follows from Serre's vanishing theorem \cite[Proposition~III.5.3]{Har77}.
\end{proof}

We end this section by observing that Corollary~\ref{vanw2} allows us to show, in the case $g=2$, that the surjectivity of the Fourier--Jacobi homomorphism is in fact equivalent to the cohomological vanishing suggested in Subsection~\ref{seccohref}, at least for sufficiently large weights.

\begin{cor}\label{surjeq2}
    Let  $\overline{\mathcal{A}}_2$ be a smooth toroidal compactification of $\mathcal{A}_2$ and let $\J$ be a coherent sheaf of ideals supported on $\overline{\mathcal{A}}_2\setminus \mathcal{A}_2$. Then, there exists an integer $k_0\geq 0$ such that, for every $k\geq k_0$, the Fourier--Jacobi homomorphism 
    \[\FJ\colon H^0(\overline{\mathcal{A}}_{2},\omega^k)\to \varprojlim_m H^0(\overline{\mathcal{A}}_{2}, \omega^k\otimes \bigslant{\OO_{\overline{\mathcal{A}}_{2}}}{\mathcal{J}^m})\] is surjective if, and only if, the cohomological vanishing \[\varprojlim_m H^1\big(\overline{\mathcal{A}}_{2},\omega^k\otimes\mathcal{J}^m\big)=0\] holds. 
\end{cor}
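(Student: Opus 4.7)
The plan is to deduce the equivalence directly from the long exact sequence constructed in Subsection~\ref{seccohref}, combined with Corollary~\ref{vanw2}. Let $k_0 \geq 0$ be the integer provided by Corollary~\ref{vanw2}, so that $H^1(\overline{\mathcal{A}}_2, \omega^k) = 0$ for every $k \geq k_0$. Fix such a $k$.

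Looking at the relevant segment of the long exact sequence
\[
H^0(\overline{\mathcal{A}}_2,\omega^k) \xrightarrow{\FJ} \varprojlim_m H^0\bigl(\overline{\mathcal{A}}_2, \omega^k\otimes\bigslant{\OO_{\overline{\mathcal{A}}_2}}{\J^m}\bigr) \xrightarrow{\delta} \varprojlim_m H^1(\overline{\mathcal{A}}_2,\omega^k\otimes\J^m) \to H^1(\overline{\mathcal{A}}_2,\omega^k),
\]
the vanishing of the right-hand term forces the connecting homomorphism $\delta$ to be surjective, while exactness in the middle gives $\ker\delta = \Ima(\FJ)$. From this one reads off the equivalence: $\FJ$ is surjective if and only if $\ker\delta$ equals the whole middle term, i.e.\ if and only if $\delta = 0$; and because $\delta$ is surjective, $\delta=0$ is equivalent to $\varprojlim_m H^1(\overline{\mathcal{A}}_2,\omega^k\otimes\J^m) = 0$.

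There is essentially no obstacle here: the only non-trivial ingredient is the vanishing $H^1(\overline{\mathcal{A}}_2,\omega^k) = 0$ for large $k$, which was already established in Corollary~\ref{vanw2} using that $\mathcal{A}_2^*$ has only rational singularities (so that the natural morphism $\pi\colon \overline{\mathcal{A}}_2\to\mathcal{A}_2^*$ is a rational resolution) together with Serre vanishing for the ample line bundle $\LL$ on $\mathcal{A}_2^*$ satisfying $\pi^*\LL = \omega$. Once this input is in hand, the proof of the corollary is a purely formal diagram-chase in the long exact sequence, and may be written out in just a few lines.
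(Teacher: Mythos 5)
Your argument is correct and coincides, step by step, with the paper's own proof: it takes as inputs precisely Corollary~\ref{vanw2} and the long exact sequence of inverse limits from Subsection~\ref{seccohref}, and then extracts the equivalence by observing that the connecting homomorphism is surjective (because $H^1(\overline{\mathcal{A}}_2,\omega^k)=0$) and that its kernel is the image of $\FJ$. The paper writes out only the nontrivial direction ($\FJ$ surjective $\Rightarrow$ vanishing), noting that the converse is immediate from exactness; you make both directions explicit, but the content and the key ingredients are the same.
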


\begin{proof}
   Let $k_0$ be as in Corollary~\ref{vanw2}, and let us assume that $\FJ$ is surjective. Let $k\geq k_0$. From the long exact sequence \begin{align*}0\to \varprojlim_m H^0(\overline{\mathcal{A}}_2,\omega^k\otimes\mathcal{J}^m) &\to H^0(\overline{\mathcal{A}}_2,\omega^{k})\xrightarrow{\FJ} \varprojlim_m H^0\big(\overline{\mathcal{A}}_2,\omega^k\otimes \bigslant{\OO_{\overline{\mathcal{A}}_{2}}}{\mathcal{J}^m}\big)\\
&\xrightarrow{\psi}  \varprojlim_m H^1(\overline{\mathcal{A}}_2,\omega^k\otimes\mathcal{J}^m)\to H^1(\overline{\mathcal{A}}_2,\omega^{k})\to\cdots,\end{align*} we deduce that the homomorphism $\psi$ is both surjective and equal to $0$, since $H^1(\overline{\mathcal{A}}_2,\omega^k)=0$ and $\FJ$ is surjective. Therefore $\varprojlim_m H^1(\overline{\mathcal{A}}_{2},\omega^k\otimes\mathcal{J}^m)=0$.
\end{proof}

We shall be able to generalize Corollary~\ref{surjeq2} to arbitrary $g\geq 2$ in Theorem~\ref{surjeq}, where we furthermore work over $\mathbb{Z}$, but with the caveat that a principal level $n\geq 3$ structure needs to be added.
\section{Cohomological vanishing in the arithmetic setting}\label{chz}

The main results of this section are Theorem~\ref{largek} and Theorem~\ref{surjeq}. The former is a cohomological vanishing result for sufficiently large weights, while the latter establishes the equivalence between the surjectivity of the Fourier--Jacobi homomorphism in level $n\geq 3$, and the vanishing of the subsequent abelian group in the corresponding long exact sequence, again for sufficiently large weights. In both cases, we derive our proofs from certain positivity properties of the Hodge line bundle, and of a sheaf of ideals supported on the boundary of a given toroidal compactification of $\mathcal{A}_{g,n}$. 

Throughout this section, we work over $\mathbb{Z}$.


\subsection{Vanishing for sufficiently large weights}\label{secvalk}

In the present subsection, our main tool is going to be Theorem~\ref{levblow}, so we need to introduce a principal level structure. Let $g\geq 2$ and $n \geq 3$ be integers, and let $\mathcal{A}_{g,n}$ denote the moduli stack of principally polarized abelian schemes $A\to S$ of relative dimension $g$, with a principal level $n$ structure. Let $X$ be a free abelian group of rank $g$, and let $C(X)$ denote the cone of positive semi-definite symmetric bilinear forms $X_{\mathbb{R}}\times X_{\mathbb{R}}\to\mathbb{R}$ whose radicals are defined over $\mathbb{Q}$. Let $\overline{\mathcal{A}}_{g,n}$ be the toroidal compactification of $\mathcal{A}_{g,n}$ associated to a smooth projective $\GL(X)$-admissible polyhedral cone decomposition $\mathfrak{C}$ of $C(X)$, with polarization function $\phi$. 

Let $\pi\colon \overline{\mathcal{A}}_{g,n}\to \mathcal{A}^*_{g,n}$ be the natural morphism to the minimal compactification, which has the property that $\omega=\pi^*\LL$, where $\omega$ denotes the Hodge line bundle on $\overline{\mathcal{A}}_{g,n}$, and $\LL$ is an ample invertible sheaf on $\mathcal{A}^*_{g,n}$. Furthermore, let $\J_\phi$ be the invertible coherent sheaf of ideals supported on $\overline{\mathcal{A}}_{g,n}\setminus \mathcal{A}_{g,n}$, whose multiplicities at each irreducible component of $\overline{\mathcal{A}}_{g,n}\setminus \mathcal{A}_{g,n}$ are dictated by the polarization function $\phi$, as in Theorem~\ref{levblow}. Then, Theorem~\ref{levblow} states that there exists an integer $d\geq 1$, such that the morphism $\pi\colon \overline{\mathcal{A}}_{g,n}\to\mathcal{A}_{g,n}^*$ factorizes as \[\overline{\mathcal{A}}_{g,n}\xrightarrow{\psi}\widetilde{\mathcal{A}}_{g,n}^*\xrightarrow{\varphi}\mathcal{A}_{g,n}^*,\] where $\varphi$ is the blow-up of $\mathcal{A}_{g,n}^*$ along the coherent sheaf of ideals $\pi_*(\J_\phi^d)$, and $\psi$ is the normalization morphism. Furthermore, we have $\pi^*\pi_*(\J_\phi^d)=\J_\phi^d$. 

In order to simplify our notation, we let $\J\coloneqq\J_\phi^d$, which is itself an invertible coherent sheaf of ideals supported on $\overline{\mathcal{A}}_{g,n}\setminus \mathcal{A}_{g,n}$. We begin our investigation of vanishing conditions for the various cohomology groups $H^1(\overline{\mathcal{A}}_{g,n},\omega^k\otimes\mathcal{J}^m)$, as the parameters $k$ and $m$ vary, with the following auxiliary positivity result for the boundary sheaf $\J$.

\begin{lem}\label{relamp}
    The sheaf of ideals $\J=\J_\phi^d$ is ample relative to the morphism $\pi\colon \overline{\mathcal{A}}_{g,n}\to \mathcal{A}_{g,n}^*$.
\end{lem}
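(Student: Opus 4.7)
The plan is to leverage the factorization $\pi = \varphi \circ \psi$ provided by Theorem~\ref{levblow}, where $\varphi\colon \widetilde{\mathcal{A}}_{g,n}^* \to \mathcal{A}_{g,n}^*$ is the blow-up of the minimal compactification along the coherent ideal sheaf $\pi_*\mathcal{J}$, and $\psi\colon \overline{\mathcal{A}}_{g,n} \to \widetilde{\mathcal{A}}_{g,n}^*$ is the normalization morphism. The heart of the strategy is to transfer the tautological $\varphi$-ampleness of the exceptional ideal sheaf on the blow-up to $\overline{\mathcal{A}}_{g,n}$ through the normalization $\psi$.

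First, I would recall from the Proj construction $\widetilde{\mathcal{A}}_{g,n}^* \cong \Proj\bigl(\bigoplus_{r\geq 0}\pi_*\mathcal{J}^r\bigr)$ that the exceptional ideal sheaf $\mathcal{I} \coloneqq \varphi^{-1}(\pi_*\mathcal{J})\cdot\mathcal{O}_{\widetilde{\mathcal{A}}_{g,n}^*}$ is invertible and canonically identifies with the tautological twisting sheaf $\mathcal{O}(1)$ of this $\Proj$-construction, and hence is $\varphi$-ample by construction.

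Second, I would identify the pullback of $\mathcal{I}$ along $\psi$ with $\mathcal{J}$ itself. Since inverse image of an ideal sheaf is compatible with composition of morphisms, and since $\mathcal{I}$ is invertible, one has
\[
\psi^*\mathcal{I} \;=\; \pi^{-1}(\pi_*\mathcal{J})\cdot\mathcal{O}_{\overline{\mathcal{A}}_{g,n}} \;=\; \pi^*\pi_*\mathcal{J} \;=\; \mathcal{J},
\]
where the last equality is exactly Theorem~\ref{levblow}(ii).

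Finally, I would conclude by invoking the finiteness of the normalization morphism $\psi$ --- available because $\widetilde{\mathcal{A}}_{g,n}^*$ is a scheme of finite type over $\Spec\mathbb{Z}[\zeta_n,1/n]$, hence excellent, hence Nagata --- together with the standard fact that the pullback of a relatively ample invertible sheaf along a finite morphism is again relatively ample (which is immediate once one recalls that relative ampleness is checked on the preimages of affine opens of the base, reducing to the absolute statement that finite pullbacks preserve ampleness). Combined with the $\varphi$-ampleness of $\mathcal{I}$, this yields that $\mathcal{J}=\psi^*\mathcal{I}$ is $(\varphi\circ\psi) = \pi$-ample, as desired.

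I do not expect any serious obstacle: each of the three ingredients is a well-known general fact, and Theorem~\ref{levblow} has already packaged the non-trivial geometric input. The mildest care needed is in tracking that one is in a category where these general results apply, but the hypothesis $n\geq 3$ is exactly the range in which $\overline{\mathcal{A}}_{g,n}$ is a projective scheme and in which Theorem~\ref{levblow} is available, so classical scheme-theoretic statements may be invoked without modification.
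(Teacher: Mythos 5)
Your proposal is correct and follows essentially the same route as the paper: factor $\pi=\varphi\circ\psi$ via Theorem~\ref{levblow}, identify the exceptional ideal on the blow-up with $\mathcal{O}(1)$ (hence $\varphi$-ample), pull back along $\psi$ to recover $\J$, and conclude by stability of relative ampleness under pullback. Two minor remarks: the paper establishes $\J\cong\psi^*\mathcal{J}'$ by a short diagram chase using $\pi^*\pi_*\J=\J$ rather than asserting directly that pullback of the invertible ideal $\mathcal{I}$ coincides with its inverse image ideal sheaf (which requires knowing the map $\psi^*\mathcal{I}\to\OO_{\overline{\mathcal{A}}_{g,n}}$ is injective, a point worth spelling out); and the paper invokes only that the normalization $\psi$ is integral, hence quasi-affine, which sidesteps the appeal to excellence/Nagata that your finiteness argument requires, though both are available in this setting.
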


\begin{proof}
    Recall that $\pi$ factorizes as \[\overline{\mathcal{A}}_{g,n}\xrightarrow{\psi}\widetilde{\mathcal{A}}_{g,n}^*\xrightarrow{\varphi}\mathcal{A}_{g,n}^*,\] where $\varphi$ is the blow-up of $\mathcal{A}_{g,n}^*$ along $\pi_*\J$, and $\psi$ is the normalization morphism.

    Let $\mathcal{\J'}\coloneqq \varphi^{-1}(\pi_*\J)\cdot\OO_{\widetilde{\mathcal{A}}_{g,n}^*}$ be the inverse image ideal sheaf of $\pi_*\J$ under the morphism $\varphi$. By definition, $\mathcal{J}'$ is given by the image of the natural map $\varphi^*\pi_*\J\to \OO_{\widetilde{\mathcal{A}}_{g,n}^*}$ induced by the inclusion $\pi_*\J\hookrightarrow\OO_{\mathcal{A}_{g,n}^*}$. We apply the right exact functor $\psi^*$ to the diagram \[\varphi^*\pi_*\J\twoheadrightarrow \mathcal{J}'\hookrightarrow\OO_{\widetilde{\mathcal{A}}_{g,n}^*}\] and obtain 
 \begin{equation}\label{eq1}
 \psi^*\varphi^*\pi_*(\J)\twoheadrightarrow\psi^*\mathcal{J}'\to\OO_{\overline{\mathcal{A}}_{g,n}}.\end{equation} Since $\psi^*\varphi^*\pi_*(\J)=\pi^*\pi_*(\J)=\J$ is a sheaf of ideals in $\OO_{\overline{\mathcal{A}}_{g,n}}$, the composition of the two natural morphisms in (\ref{eq1}) is (naturally isomorphic to) the inclusion $\J\hookrightarrow \OO_{\overline{\mathcal{A}}_{g,n}}$. Therefore, the morphism $\J\to\psi^*\mathcal{\J'}$ is an isomorphism.

 Now, as shown in the proof of \cite[Proposition~II.7.13]{Har77}, we have $\mathcal{J}'=\OO_{\widetilde{\mathcal{A}}_{g,n}^*}(1)$, so $\J'$ is very ample relative to $\varphi$. Furthermore, normalization maps are integral; in particular, they are quasi-affine, so $\J\cong\psi^*\J'$ is ample relative to $\pi$ by \href{https://stacks.math.columbia.edu/tag/0892}{[Stacks, Lemma~29.37.7]}. 
\end{proof}

We are now able to establish a cohomological vanishing result for a sufficiently large weight~$k$.

\begin{thm}\label{largek}
    Let us fix integers $g\geq 2$ and $n\geq 3$. Then, there exist an integer $m_0>0$ and a function $F\colon \mathbb{N}\to\mathbb{N}$ such that, if $m\geq m_0$ and $k\geq F(m)$, then \[H^i(\overline{\A}_{g,n},\omega^{k}\otimes\J^m)=0\] for all $i>0$.
\end{thm}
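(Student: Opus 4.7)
The plan is to push the cohomology down to the minimal compactification $\mathcal{A}^*_{g,n}$, exploiting the two positivity properties available: the $\pi$-ampleness of $\mathcal{J}$ just established in Lemma~\ref{relamp}, and the ampleness on $\mathcal{A}^*_{g,n}$ of the line bundle $\mathcal{L}$ satisfying $\pi^*\mathcal{L}=\omega$. Two successive applications of Serre vanishing (first relative, then absolute) will give the result.

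\textbf{Step 1 (relative vanishing).} Since $\pi\colon \overline{\mathcal{A}}_{g,n}\to \mathcal{A}^*_{g,n}$ is proper and $\mathcal{J}$ is $\pi$-ample by Lemma~\ref{relamp}, the relative form of Serre's cohomological criterion of ampleness produces an integer $m_0\geq 1$ such that
\[ R^i\pi_*(\mathcal{J}^m)=0 \quad \text{for all } m\geq m_0 \text{ and all } i>0. \]
Combined with the projection formula and the identity $\omega^k=\pi^*\mathcal{L}^k$, this yields
\[ R^i\pi_*(\omega^k\otimes\mathcal{J}^m)\cong \mathcal{L}^k\otimes R^i\pi_*(\mathcal{J}^m)=0 \]
for all $m\geq m_0$, $k\geq 0$ and $i>0$. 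The Leray spectral sequence for $\pi$ therefore degenerates and gives a canonical isomorphism
\[ H^i(\overline{\mathcal{A}}_{g,n},\omega^k\otimes\mathcal{J}^m)\cong H^i(\mathcal{A}^*_{g,n},\mathcal{L}^k\otimes\pi_*\mathcal{J}^m) \]
for every $m\geq m_0$, $k\geq 0$ and $i>0$.

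\textbf{Step 2 (absolute vanishing).} For each fixed $m\geq m_0$, the direct image $\pi_*\mathcal{J}^m$ is coherent on the proper normal scheme $\mathcal{A}^*_{g,n}$ of finite type over $\Spec\mathbb{Z}[\zeta_n,1/n]$, on which $\mathcal{L}$ is (relatively) ample. Serre's cohomological criterion of ampleness in its absolute form then furnishes an integer $F(m)\geq 0$ such that
\[ H^i(\mathcal{A}^*_{g,n},\mathcal{L}^k\otimes\pi_*\mathcal{J}^m)=0 \quad \text{for all } k\geq F(m) \text{ and } i>0. \]
Defining $F(m)$ arbitrarily (say $F(m)=0$) for $m<m_0$ completes the construction, and combining the two steps gives $H^i(\overline{\mathcal{A}}_{g,n},\omega^k\otimes\mathcal{J}^m)=0$ whenever $m\geq m_0$, $k\geq F(m)$ and $i>0$.

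\textbf{Main obstacle.} The only genuinely nontrivial ingredient is the $\pi$-ampleness of $\mathcal{J}$, which has already been secured in Lemma~\ref{relamp} via Theorem~\ref{levblow}; the remainder of the argument is a formal interplay of the projection formula, the Leray spectral sequence and Serre vanishing. One minor point to verify is that $\mathcal{L}$ may legitimately be treated as ample for the purpose of absolute Serre vanishing — this is fine because $\mathcal{A}^*_{g,n}$ is proper over the Noetherian base $\Spec\mathbb{Z}[\zeta_n,1/n]$, so relative ampleness of $\mathcal{L}$ gives a projective embedding over that base and hence the usual vanishing theorem applies.
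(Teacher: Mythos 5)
Your proof is correct and follows essentially the same route as the paper's: relative Serre vanishing from the $\pi$-ampleness of $\mathcal{J}$ (Lemma~\ref{relamp}), the projection formula with $\omega^k=\pi^*\mathcal{L}^k$, degeneration of the Leray spectral sequence, and then absolute Serre vanishing for $\mathcal{L}$ on $\mathcal{A}^*_{g,n}$. Your parenthetical remark on why relative ampleness of $\mathcal{L}$ over the Noetherian base suffices is a slightly more careful gloss than the paper gives, but the argument is the same.
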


\begin{proof}
    Since $\J$ is ample relative to $\pi$, by the relative version of Serre's vanishing theorem \href{https://stacks.math.columbia.edu/tag/02O1}{[Stacks, Lemma~30.16.2]} there is an integer $m_0>0$ such that, if $m\geq m_0$, then \[R^i\pi_*(\J^{m})=0\] for all $i>0$. Recall that $\omega=\pi^*\LL$, where $\LL$ is an ample invertible sheaf on $\mathcal{A}^*_{g,n}$. By the projection formula, for every $k\geq 0$ and every $m\geq m_0$, we have \begin{align*}
        R^i\pi_*(\omega^k\otimes\J^m)&\cong \LL^k\otimes R^i\pi_*(\J^m)\\
        &=0
    \end{align*} for all $i>0$. Hence, as a degenerate case of the Leray spectral sequence, for every $k\geq 0$ and every $m\geq m_0$, we have isomorphisms
\begin{align*}H^i(\overline{\A}_{g,n}, \omega^{k}\otimes\J^{m})&\cong H^i(\A^*_{g,n}, \pi_*(\omega^k\otimes\J^{m}))\\
&\cong H^i(\A^*_{g,n},\LL^k\otimes\pi_*(\J^{m}))
\end{align*} for all $i\geq 0$. Since $\LL$ is ample, the claim now follows from Serre's vanishing theorem \cite[Proposition~III.5.3]{Har77}.
\end{proof}

\subsection{Cohomological characterization of modularity}\label{characz}

We were led to investigate vanishing results for the sheaf cohomology groups $H^1(\overline{\mathcal{A}}_{g,n},\omega^k\otimes\mathcal{J}^m)$ by the observation that, from the exactness of the sequence \begin{multline*}0\to \varprojlim_m H^0(\overline{\mathcal{A}}_{g,n},\omega^k\otimes\mathcal{J}^m) \to H^0(\overline{\mathcal{A}}_{g,n},\omega^k)\xrightarrow{\FJ_n} \varprojlim_m H^0\big(\overline{\mathcal{A}}_{g,n}, \omega^k\otimes\bigslant{\OO_{\overline{\mathcal{A}}_{g,n}}}{\mathcal{J}^m}\big)\\
\to  \varprojlim_m H^1(\overline{\mathcal{A}}_{g,n},\omega^k\otimes\mathcal{J}^m)\to H^1(\overline{\mathcal{A}}_{g,n},\omega^k)\to\cdots,\end{multline*} it follows that the vanishing \[\varprojlim_m H^1(\overline{\mathcal{A}}_{g,n},\omega^k\otimes\mathcal{J}^m)=0\] would imply the surjectivity of $\FJ_n$. We can now prove, for sufficiently large $k$, that this is in fact a necessary condition.

\begin{thm}\label{surjeq}
    Let us fix integers $g\geq 2$ and $n\geq 3$. There exists an integer $k_0\geq 0$ such that, if $k\geq k_0$, then the homomorphism 
    \[\FJ_n\colon H^0(\overline{\mathcal{A}}_{g,n},\omega^k)\to \varprojlim_m H^0\big(\overline{\mathcal{A}}_{g,n}, \omega^k\otimes\bigslant{\OO_{\overline{\mathcal{A}}_{g,n}}}{\mathcal{J}^m}\big)\] is surjective if, and only if, the cohomological vanishing \[\varprojlim_m H^1(\overline{\mathcal{A}}_{g,n},\omega^k\otimes\mathcal{J}^m)=0\] holds. 
\end{thm}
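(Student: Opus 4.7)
The plan is to compute $\varprojlim_m H^1(\overline{\mathcal{A}}_{g,n}, \omega^k \otimes \J^m)$ explicitly for $k$ sufficiently large and identify it with $\operatorname{coker}(\FJ_n)$; both implications of the equivalence will then follow at once. First I choose $k_0 \geq 0$ such that $H^i(\mathcal{A}^*_{g,n}, \LL^k) = 0$ for all $k \geq k_0$ and all $i \geq 1$, which is possible by Serre's cohomological criterion of ampleness since $\LL$ is ample on $\mathcal{A}^*_{g,n}$. Fix $k \geq k_0$ from now on, and set $\mathcal{I}_m \coloneqq \pi_*\J^m$. By Lemma~\ref{relamp}, $\J$ is $\pi$-ample, so the relative version of Serre's vanishing gives $R^i\pi_*\J^m = 0$ for $i > 0$ and $m \geq m_0$. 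Combining this with the projection formula (using $\omega = \pi^*\LL$ and $\pi_*\OO_{\overline{\mathcal{A}}_{g,n}} = \OO_{\mathcal{A}^*_{g,n}}$), and applying $\pi_*$ to the short exact sequence $0 \to \omega^k \otimes \J^m \to \omega^k \to \omega^k \otimes \OO_{\overline{\mathcal{A}}_{g,n}}/\J^m \to 0$, one obtains for every $m \geq m_0$ natural isomorphisms
\[
H^i(\overline{\mathcal{A}}_{g,n}, \omega^k \otimes \J^m) \cong H^i(\mathcal{A}^*_{g,n}, \LL^k \otimes \mathcal{I}_m), \quad H^0(\overline{\mathcal{A}}_{g,n}, \omega^k \otimes \OO/\J^m) \cong H^0(\mathcal{A}^*_{g,n}, \LL^k \otimes \OO/\mathcal{I}_m),
\]
together with the unconditional $H^0(\overline{\mathcal{A}}_{g,n}, \omega^k) \cong H^0(\mathcal{A}^*_{g,n}, \LL^k)$, so that the problem is transferred onto the minimal compactification.

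I next work with the short exact sequence $0 \to \LL^k \otimes \mathcal{I}_m \to \LL^k \to \LL^k \otimes \OO/\mathcal{I}_m \to 0$ on $\mathcal{A}^*_{g,n}$. Since $H^1(\mathcal{A}^*_{g,n}, \LL^k) = 0$, its long exact cohomology sequence truncates to a four-term exact sequence, which I split as $0 \to K_m \to H^0(\LL^k) \to I_m \to 0$ followed by $0 \to I_m \to H^0(\LL^k \otimes \OO/\mathcal{I}_m) \to H^1(\LL^k \otimes \mathcal{I}_m) \to 0$, with $I_m$ the image of $H^0(\LL^k)$. The middle of the first one is the constant inverse system $H^0(\LL^k)$, so Lemma~\ref{invlimex} shows that $\varprojlim_m$ preserves its exactness. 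The transition maps $I_{m+1} \twoheadrightarrow I_m$ are surjective by naturality (since $\varphi_m$ factors through $\varphi_{m+1}$), so the Mittag--Leffler condition holds for $\{I_m\}$ and $\varprojlim_m$ also preserves exactness of the second short exact sequence. Invoking Grothendieck's theorem on formal functions to identify $\varprojlim_m H^0(\mathcal{A}^*_{g,n}, \LL^k \otimes \OO/\mathcal{I}_m)$ with $H^0(\widehat{\mathcal{A}}^*_{g,n}, \widehat{\LL}^k)$, splicing the two resulting exact sequences yields
\[
H^0(\mathcal{A}^*_{g,n}, \LL^k) \xrightarrow{\varphi} H^0(\widehat{\mathcal{A}}^*_{g,n}, \widehat{\LL}^k) \to \varprojlim_m H^1(\overline{\mathcal{A}}_{g,n}, \omega^k \otimes \J^m) \to 0.
\]

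A naturality check, using the Leray identifications from the first paragraph, shows that $\varphi$ coincides with $\FJ_n$. Therefore $\varprojlim_m H^1(\overline{\mathcal{A}}_{g,n}, \omega^k \otimes \J^m) \cong \operatorname{coker}(\FJ_n)$, and the claimed equivalence is immediate. The main delicate point of the argument is the handling of the inverse limits: Lemma~\ref{invlimex} cleanly covers the sequence with constant middle term, whereas passing $\varprojlim_m$ through the second short exact sequence requires the Mittag--Leffler property of the image systems $\{I_m\}$, which is precisely what the surjectivity of their transition maps delivers.
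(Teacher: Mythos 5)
Your proof is correct, and it proceeds along a genuinely different route than the paper's. The paper assembles a ladder-shaped commutative diagram comparing the long exact sequence on $\mathcal{A}^*_{g,n}$ (with $\beta_i$'s) to the one on $\overline{\mathcal{A}}_{g,n}$ (with $\alpha_i$'s), proves that the comparison map $\gamma_4$ on the $\varprojlim H^1$ terms is an isomorphism, and then runs a diagram chase: the coboundary $\beta_3$ is surjective because $H^1(\mathcal{A}^*_{g,n},\LL^k)=0$, and it is also zero because $\gamma_4\circ\beta_3=\alpha_3\circ\gamma_3=0$ with $\alpha_3=0$ coming from the assumed surjectivity of $\FJ_n$; a surjective zero map forces the target to vanish. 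You instead transfer the whole problem to the minimal compactification once and for all via the Leray degeneracy isomorphisms, split the resulting four-term sequence into two short exact sequences, and compute $\operatorname{coker}(\FJ_n)\cong\varprojlim_m H^1(\overline{\mathcal{A}}_{g,n},\omega^k\otimes\J^m)$ directly, yielding both implications at once. The two proofs use exactly the same inputs (Lemma~\ref{relamp}, the relative and absolute Serre vanishing, the projection formula, Lemma~\ref{invlimex}), so the difference is one of bookkeeping. Your bookkeeping has a real advantage, though: you explicitly isolate the exactness needed for the inverse limit and justify it via the Mittag--Leffler condition on the image systems $\{I_m\}$, whereas the paper's appeal to Lemma~\ref{invlimex} to claim exactness of the full $\varprojlim$-sequence is, strictly speaking, insufficient on its own for the surjectivity of $\beta_3$ and implicitly relies on the same Mittag--Leffler observation you make explicit. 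One small caveat in your write-up: the detour through Grothendieck's theorem on formal functions to identify $\varprojlim_m H^0(\mathcal{A}^*_{g,n},\LL^k\otimes\OO/\mathcal{I}_m)$ with a formal $H^0$ is unnecessary (the $\mathcal{I}_m$ are not powers of a fixed ideal, so this requires care), and you do not actually use it; the term-by-term identification with $\varprojlim_m H^0(\overline{\mathcal{A}}_{g,n},\omega^k\otimes\OO/\J^m)$ via the Leray isomorphism suffices.
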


\begin{proof}
    Let us assume that $\FJ_n$ is surjective. Applying the pushforward functor $\pi_*$ to the short exact sequence \[ 0\to\omega^k\otimes\mathcal{J}^m\to \omega^k\to\omega^k\otimes\bigslant{\OO_{\overline{\mathcal{A}}_{g,n}}}{\mathcal{J}^m}\to 0\] of sheaves on $\overline{\mathcal{A}}_{g,n}$, and using the projection formula, produces the long exact sequence 
    \begin{equation}\label{longsheaves}
        0\to\LL^k\otimes\pi_*(\mathcal{J}^{m})\to\LL^k\to \LL^k\otimes\pi_*\big(\bigslant{\OO_{\overline{\mathcal{A}}_{g,n}}}{\mathcal{J}^{m}}\big)\to \LL^k\otimes R^1\pi_*(\mathcal{J}^{m})\to\cdots
    \end{equation} 
    of sheaves on $\mathcal{A}_{g,n}^*$. According to Lemma~\ref{relamp}, the invertible sheaf $\J$ is ample relative to the morphism $\pi$, so the relative version of Serre's vanishing theorem \href{https://stacks.math.columbia.edu/tag/02O1}{[Stacks, Lemma~30.16.2]} implies that there is an integer $m_0>0$ such that, if $m\geq m_0$, then $R^1\pi_*(\J^{m})=0$. Thus, for $m\geq m_0$, the sequence \eqref{longsheaves} is a short exact sequence, and hence induces a long exact sequence
    \begin{multline*}
0\to H^0({\mathcal{A}}^*_{g,n},\LL^k\otimes\pi_*(\mathcal{J}^{m})) \xrightarrow{} H^0({\mathcal{A}}^*_{g,n},\LL^k)\xrightarrow{} H^0\big({\mathcal{A}}^*_{g,n}, \LL^k\otimes\pi_*\big(\bigslant{\OO_{\overline{\mathcal{A}}_{g,n}}}{\mathcal{J}^{m}}\big)\big)\\ 
\xrightarrow{}   H^1({\mathcal{A}}^*_{g,n},\LL^k\otimes\pi_*(\mathcal{J}^{m}))\xrightarrow{} H^1({\mathcal{A}}^*_{g,n},\LL^k)\to\cdots
\end{multline*} of cohomology groups. By Lemma~\ref{invlimex}, if we now apply the inverse limit functor, then the resulting sequence
\begin{multline*}
0\to \varprojlim_m H^0({\mathcal{A}}^*_{g,n},\LL^k\otimes\pi_*(\mathcal{J}^{m})) \xrightarrow{\beta_1} H^0({\mathcal{A}}^*_{g,n},\LL^k)\xrightarrow{\beta_2}\\\varprojlim_m H^0({\mathcal{A}}^*_{g,n}, \LL^k\otimes\pi_*(\bigslant{\OO_{\overline{\mathcal{A}}_{g,n}}}{\mathcal{J}^{m}})) 
\xrightarrow{\beta_3}  \varprojlim_m H^1({\mathcal{A}}^*_{g,n},\LL^k\otimes\pi_*(\mathcal{J}^{m}))\\\xrightarrow{\beta_4} H^1({\mathcal{A}}^*_{g,n},\LL^k)\to\cdots
\end{multline*} remains exact. 

For any $\OO_{\overline{\mathcal{A}}_{g,n}}$-module $\mathcal{F}$, the functoriality of cohomology and the adjunction between $\pi^*$ and $\pi_*$ give natural maps $H^i(\mathcal{A}^*_{g,n},\pi_*\mathcal{F})\to H^i(\overline{\mathcal{A}}_{g,n}, \pi^*\pi_*\mathcal{F})\to H^i(\overline{\mathcal{A}}_{g,n}, \mathcal{F})$ for all $i\geq 0$. Therefore, we have a natural map between long exact sequences, as portrayed in the following commutative diagram.

    \[\begin{tikzcd}
 0\arrow{d}  &0\arrow{d}  \\
 \varprojlim\limits_m H^0({\mathcal{A}}^*_{g,n},\LL^k\otimes\pi_*(\mathcal{J}^{m})) \arrow{d}{\beta_1} \arrow{r}{\gamma_1} &\varprojlim\limits_m H^0(\overline{\mathcal{A}}_{g,n},\omega^k\otimes\mathcal{J}^m)\arrow{d}{\alpha_1} \\
 H^0({\mathcal{A}}^*_{g,n},\LL^k)\arrow{d}{\beta_2}\arrow{r}{\gamma_2} &H^0(\overline{\mathcal{A}}_{g,n},\omega^k)\arrow{d}{\FJ_n} \\
 \varprojlim\limits_m H^0({\mathcal{A}}^*_{g,n}, \LL^k\otimes\pi_*(\bigslant{\OO_{\overline{\mathcal{A}}_{g,n}}}{\mathcal{J}^{m}}))\arrow{d}{\beta_3} \arrow{r}{\gamma_3} &\varprojlim\limits_m H^0(\overline{\mathcal{A}}_{g,n}, \omega^k\otimes\bigslant{\OO_{\overline{\mathcal{A}}_{g,n}}}{\mathcal{J}^m})\arrow{d}{\alpha_3}\\
 \varprojlim\limits_m H^1({\mathcal{A}}^*_{g,n},\LL^k\otimes \pi_*(\mathcal{J}^{m}))\arrow{r}{\gamma_4}\arrow{d}{\beta_4} &\varprojlim\limits_m H^1(\overline{\mathcal{A}}_{g,n},\omega^k\otimes\mathcal{J}^m)\arrow{d}{\alpha_4}\\
 H^1({\mathcal{A}}^*_{g,n},\LL^k)\arrow{d}\arrow{r}{\gamma_5} &H^1(\overline{\mathcal{A}}_{g,n},\omega^k)\arrow{d}\\
 \vdots &\vdots
\end{tikzcd}\]

We start our analysis of this diagram by noticing that the homomorphism $\gamma_4$ is an isomorphism. Indeed, as in the proof of Theorem~\ref{largek}, since $R^i\pi_*(\J^{m})=0$ for all $i>0$ if $m\geq m_0$, we have isomorphisms \begin{align*}H^1(\overline{\A}_{g,n}, \omega^k\otimes\mathcal{J}^{m})&\cong H^1(\A^*_{g,n}, \pi_*(\omega^k\otimes\mathcal{J}^{m}))\\
&\cong H^1(\A^*_{g,n},\LL^k\otimes\pi_*(\J^{m}))
\end{align*} for $m\geq m_0$, and hence the induced map of inverse limits $\gamma_4$ is also an isomorphism.

Now, since $\LL$ is ample, by Serre's vanishing theorem \cite[Proposition~III.5.3]{Har77} there exists an integer $k_0\geq 0$ such that $H^1(\mathcal{A}_{g,n}^*,\LL^k)=0$ for all $k\geq k_0$. Let us then suppose that $k\geq k_0$, so the map $\beta_3$ is surjective. Furthermore, since $\FJ_n$ is assumed to be surjective, we also have $\alpha_3=0$. Then, the equality \[\gamma_4\circ\beta_3=\alpha_3\circ\gamma_3=0\] holds. But $\gamma_4$ is an isomorphism, so we have $\beta_3=0$. Therefore, since the homomorphism $\beta_3$ is both surjective and equal to zero, we have \[\varprojlim\limits_m H^1(\overline{\mathcal{A}}_{g,n},\omega^k\otimes\mathcal{J}^{m})\cong\varprojlim\limits_m H^1({\mathcal{A}}^*_{g,n},\LL^k\otimes\pi_*(\mathcal{J}^{m}))=0.\]
This completes the proof.
\end{proof}


\end{document}